\theoremstyle{plain} 
\newtheorem{dummy}{anything}[section]
\newtheorem{theorem}[dummy]{Theorem}
\newtheorem{lemma}[dummy]{Lemma}
\newtheorem{proposition}[dummy]{Proposition}
\theoremstyle{definition}
\newtheorem{remark}[dummy]{Remark}
\def\:{\mkern 1.2mu \colon}
\newcommand{\mmatrix}[4]{\left (\vcenter
	{\xymatrix@C-2pc@R-2pc{#1&#2\\#3&#4} } \right )}
\DeclareMathOperator{\Mod}{Mod}
\numberwithin{equation}{section}
\begin{document}
	\title[Minimal Generation of Mapping Class Groups]
	{Minimal Generation of Mapping Class Groups: A Survey of the Nonorientable Case }
    
    \subjclass[2020]{Primary: 57K20; Secondary: 20F38, 20F05 }
	\keywords{nonorientable surface, mapping class group, minimal generating set, torsion element, involution, commutator.}
	
	\author{ T{\"{u}}l{\.I}n Altun{\"{o}}z, Mehmetc{\.I}k Pamuk, Oguz Yildiz}
		
	\address{Faculty of Engineering, Ba\c{s}kent University, Ankara, Turkey} 
\email{tulinaltunoz@baskent.edu.tr} 
\address{Department of Mathematics, Middle East Technical University,
 Ankara, Turkey}
 \email{mpamuk@metu.edu.tr}
 \address{Department of Mathematics, Middle East Technical University,
 Ankara, Turkey}
  \email{oguzyildiz16@gmail.com}
		
	\date{\today}	
	\begin{abstract}
This chapter provides a comprehensive survey of foundational results and recent advances concerning minimal generating sets for the mapping class group of a nonorientable surface, $\Mod(N_{g})$, and its index-two twist subgroup, $\mathcal{T}_{g}$. Although the theory for orientable surfaces is well established, the nonorientable case presents unique challenges due to the presence of crosscaps, thus requiring generators beyond Dehn twists. We show that, for a sufficiently large genus $g$, both $\Mod(N_{g})$ and $\mathcal{T}_{g}$ are generated by two elements, which is the minimum possible number. The survey details various types of generating sets, including those composed of torsions, involutions, and commutators, illustrating the geometric and algebraic interplay. We unify foundational work with modern breakthroughs and extend results to punctured surfaces, $\Mod(N_{g,p})$, providing explicit generators, relations, and proof sketches with an emphasis on geometric intuition.
\end{abstract}
	
	
	\maketitle	

\section{Introduction}

The mapping class group\index{mapping class group} (MCG) of a surface, its group of self-homeomorphisms up to isotopy, stands as a fundamental object in low-dimensional topology. For decades, the theory of mapping class groups has focused primarily on orientable surfaces. Here, the theory is a model of mathematical elegance, founded on the pioneering work of Dehn~\cite{dehn}, Nielsen~\cite{nielsen}, and Lickorish~\cite{lickorish}. A key breakthrough was the proof of finite generation, with Dehn twists\index{Dehn twist} playing the central role. This foundational work led to deep connections with Teichmüller theory, moduli spaces of algebraic curves, and symplectic and arithmetic groups. The elegance of this theory extends to its minimal generating sets\index{generating set!minimal}. A particularly striking result by Maclachlan~\cite{maclachlan} showed that the generation of the mapping class group by torsion elements\index{torsion element} implies that the moduli space of algebraic curves is simply-connected. Building on this, Wajnryb~\cite{wajnryb} later proved that for an orientable surface of genus greater than one, the mapping class group can be generated by just two elements. These results on minimal generation are not merely numerical curiosities. The quest to find the smallest set of generators often leads to the discovery of interesting new elements and intricate relations within the group, offering deeper insights into its algebraic and geometric structure. This well-understood world of twists on orientable surfaces provides the essential backdrop for the wilder, less-explored frontier of their nonorientable counterparts.

When one turns to nonorientable surfaces\index{surface!nonorientable}, however, the landscape becomes wilder and less explored. In the 1970s, D.R.J. Chillingworth pioneered the study of MCGs for nonorientable surfaces, identifying finite generating sets for surfaces with boundary~\cite{chillingworth}. His work revealed stark contrasts with orientable cases: for instance, Dehn twists alone do not generate the full group, necessitating new types of generators like crosscap slides\index{crosscap slide} (or Y-homeomorphisms\index{Y-homeomorphism}). This departure from the orientable case makes the MCGs of nonorientable surfaces, denoted $\Mod(N_g)$, a compelling field of study for several reasons. They provide a crucial testing ground for understanding how fundamental changes in surface topology radically alter algebraic group structure. Furthermore, these groups act on the moduli spaces of real algebraic curves, forging a deep link to real algebraic geometry. Beyond pure mathematics, nonorientable surfaces appear in physical contexts like string theory, and their mapping class groups are essential for understanding their symmetries and quantization.

Subsequent advances in the 1990s--2000s by mathematicians such as Wajnryb, Korkmaz, and Szepietowski expanded the field. Szepietowski derived explicit finite presentations for MCGs of closed nonorientable surfaces~\cite{Szepietowski1994}, while more recent work by Paris, Margalit, and others has explored connections to curve complexes and automorphism groups of free groups~\cite{Paris2013}. A particularly rich theme in the theory of $\Mod(N_g)$ is that of minimal generation\index{minimal generation}. Fundamental questions include: What is the smallest number of elements needed to generate the group? Can it be generated by torsion elements? By involutions\index{involution}? Are there generating sets that consist entirely of commutators\index{commutator}? The answers are subtle and often depend delicately on the genus $g$.

Recent advances have led to significant breakthroughs.
\begin{itemize}
    \item For $g \geq 19$, $\Mod(N_g)$ can be generated by just two elements~\cite{altunoz-pamuk-yildiz}.
    \item For $g \geq 26$, it can be generated by three involutions~\cite{altunoz-pamuk-yildiz}.
    \item For all $g \neq 4$, it admits generating sets consisting of three torsion elements~\cite{du, lesniak-szepietowski}.
    \item For large $g$, these torsion elements can be taken to be conjugate~\cite{lesniak-szepietowski}.
\end{itemize}

These results have analogues for the \emph{twist subgroup}\index{twist subgroup} $\mathcal{T}_g \subset \Mod(N_g)$, the index-two subgroup generated solely by Dehn twists about two-sided curves. This subgroup retains many features of the orientable MCG while reflecting the nonorientable topology in its algebraic structure. The twist subgroup is perfect\index{perfect group} for $g \geq 7$, and recent work has produced minimal generating sets of torsions, involutions, and commutators.

This survey charts the landscape of generating sets for $\Mod(N_g)$ and its index-two twist subgroup, $\mathcal{T}_g$. We begin by recounting the foundational results that established finite generation and delve into the central theme of minimal generation. We will explore the progression of results that ultimately show that for a sufficiently large genus $g$, both $\Mod(N_g)$ and $\mathcal{T}_g$ can be generated by just two elements, the minimum possible. Along the way, we will examine the nature of these generators, focusing on generating sets composed entirely of torsions, involutions, or commutators, which reveal the intricate interplay between the algebra of the group and the geometry of the surface. Finally, we will survey the current state of knowledge for the more complex case of nonorientable surfaces with punctures\index{puncture}, $\Mod(N_{g,p})$.

\section{Foundations of Nonorientable Surfaces and Mapping Class Groups}

Let \(N_{g}\) be a closed connected nonorientable surface of genus\index{genus!nonorientable} \(g\), where the genus is defined as the number of crosscaps\index{crosscap}. The \textit{mapping class group}\index{mapping class group!definition} \(\operatorname{Mod}(N_{g})\) is the group of isotopy classes of all self-homeomorphisms of \(N_{g}\). For small genera:
\begin{itemize}
    \item \(\operatorname{Mod}(N_{1})\) is trivial.
    \item \(\operatorname{Mod}(N_{2}) \cong \mathbb{Z}/2\mathbb{Z}\)~\cite{lickorishnon1}.
    \item For \(g \geq 3\), the structure becomes richly complex.
\end{itemize}

\subsection*{Key Geometric Elements}
Represent \(N_g\) as a sphere with \(g\) crosscaps (disks with antipodal boundary identifications) as shown in Figure~\ref{NG}. On such surfaces:
\begin{itemize}
    \item A simple closed curve is \textit{one-sided}\index{curve!one-sided} if its regular neighborhood is a Möbius band (e.g., curves through crosscaps).
    \item A curve is \textit{two-sided}\index{curve!two-sided} if its neighborhood is an annulus (admits Dehn twists).
\end{itemize}
\begin{figure}[h]
\begin{center}
\scalebox{0.35}{\includegraphics{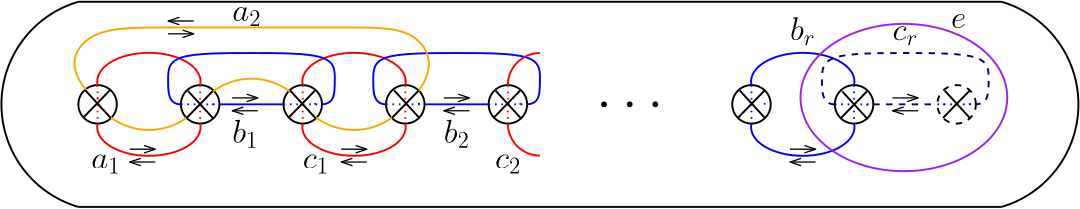}}
\caption{A standard model of the surface $N_g$ for $g=2r$ or $g=2r+1$. The figure shows the curves $a_{1}$, $a_{2}$, $b_{i}$, $c_{i}$, and the curve $e$ which bounds a Klein bottle with a hole formed by the last two crosscaps. Note that the curve $c_{r}$ does not exist when $g$ is odd.}
\label{NG}
\end{center}
\end{figure}

\subsection*{Generators and Relations}
Lickorish~\cite{lickorishnon1,lickorishnon2} first proved that \(\operatorname{Mod}(N_g)\) is finitely generated. The generating set consists of:
\begin{itemize}
    \item \textit{Dehn twists}\index{Dehn twist} \(t_\alpha\) about two-sided curves (Figure~\ref{twist}).
    \item The \textit{crosscap slide}\index{crosscap slide} \(Y_{\mu,\alpha}\) (Y-homeomorphism\index{Y-homeomorphism}). 
\end{itemize}
Crosscap slides arise when interchanging consecutive crosscaps in a Klein bottle subsurface \(K\) (see Figure~\ref{YHOM}). Algebraically, \(Y_{\mu,\alpha} = t_\alpha u_{\mu,\alpha}\) where \(u_{\mu,\alpha}\) is a crosscap transposition\index{crosscap transposition} satisfying \(u_{\mu,\alpha}^2 = t_{\partial K}\).
\begin{figure}[h]
\begin{center}
\scalebox{0.35}{\includegraphics{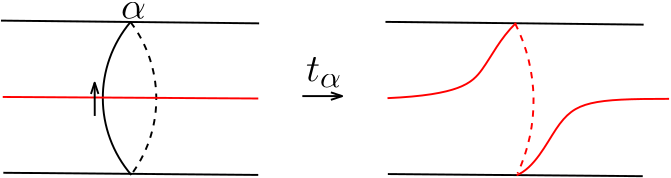}}
\caption{A description of a Dehn twist.}
\label{twist}
\end{center}
\end{figure}

\begin{figure}[h]
\begin{center}
\scalebox{0.3}{\includegraphics{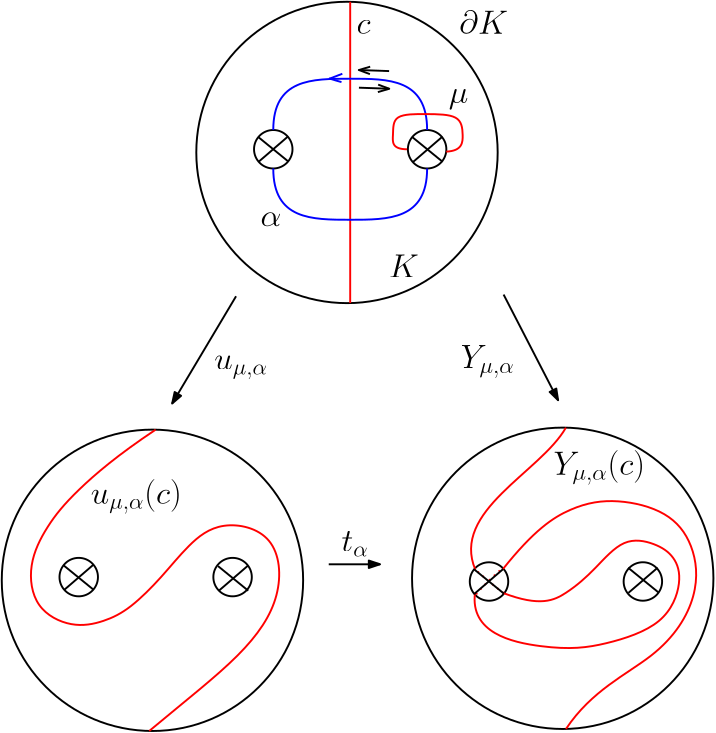}}
\caption{Crosscap transposition \(u_{\mu,\alpha}\) and crosscap slide \(Y_{\mu,\alpha}\).}
\label{YHOM}
\end{center}
\end{figure}
\par
\noindent

Conjugation relations govern generators:
\[
f t_\alpha f^{-1} = t_{f(\alpha)}^s, \quad 
f Y_{\mu,\alpha} f^{-1} = Y_{f(\mu),f(\alpha)}
\]
where \(s = \pm 1\) depends on whether \(f\) preserves or reverses local orientation. Throughout this chapter, we write the conjugation $fgf^{-1}$ as $g^{f}$ for any mapping classes $f, g$.
\noindent
Fundamental relations include:
\[
Y_{\mu_1,\alpha}^{-1} Y_{\mu_2,\alpha} = Y_{\mu_2,\alpha} Y_{\mu_1,\alpha}^{-1} = t_\alpha^2,
\]
where the one-sided curves $\mu_1$ and $\mu_2$ are as in Figure~\ref{yhom_relation}.
\begin{figure}[h]
\begin{center}
\scalebox{0.45}{\includegraphics{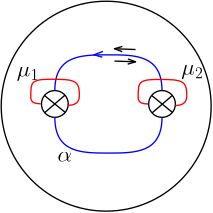}}
\caption{Relations on crosscap slides.}
\label{yhom_relation}
\end{center}
\end{figure}
\subsection*{Twist Subgroup and Homology Action}
The \textit{twist subgroup}\index{twist subgroup} \(\mathcal{T}_g \subset \operatorname{Mod}(N_g)\), generated by all Dehn twists, has index $2$~\cite{lickorishnon2,stukow}. The action of $\mathrm{Mod}(N_g)$ on the first homology group\index{homology group} with $\mathbb{Z}_2$ coefficients yields a surjective homomorphism $\mathrm{Mod}(N_g) \to \mathrm{Aut}(H_1(N_g; \mathbb{Z}_2))$ that preserves the mod-2 intersection form~\cite{mpin}. The image of this representation is isomorphic to~\cite{sz3,korkmaz1998}:
\[
\begin{cases} 
\operatorname{Sp}(2h; \mathbb{Z}_2) & g = 2h+1 \\
\operatorname{Sp}(2h; \mathbb{Z}_2) \ltimes \mathbb{Z}_2^{2h+1} & g = 2h+2 
\end{cases}
\]
inducing an epimorphism onto the symplectic group\index{symplectic group} \(\operatorname{Sp}\left(2\lfloor \frac{g-1}{2} \rfloor; \mathbb{Z}_2\right)\). Thus, generators of \(\operatorname{Mod}(N_g)\) project to generators of this symplectic quotient. This connection to classical groups is a key algebraic tool for analyzing the structure of $\mathrm{Mod}(N_g)$.

To provide a more complete picture, it is useful to introduce the broader context of nonorientable surfaces with punctures and boundary components, as many results for closed surfaces extend to these more general settings. Let $N=N_{g,p}^{m}$ denote a connected nonorientable surface of genus $g$ with $m$ boundary components and $p$ punctures. To construct a representation of $N$, we begin with an orientable connected surface $\Sigma=\Sigma_{g_1,p}^{g_2+m}$ of genus $g_1$ having $g_2 + m$ boundary components and $p$ punctures, where $2g_1 + g_2 = g$ and $g_2 \geq 1$. The nonorientable surface $N$ is then visualized by adding crosscaps to $g_2$ of the boundary components in a standard model of $\Sigma$ (see Figure~\ref{modelexample_non} for an example).
\begin{figure}[h]
\begin{center}
\scalebox{0.32}{\includegraphics{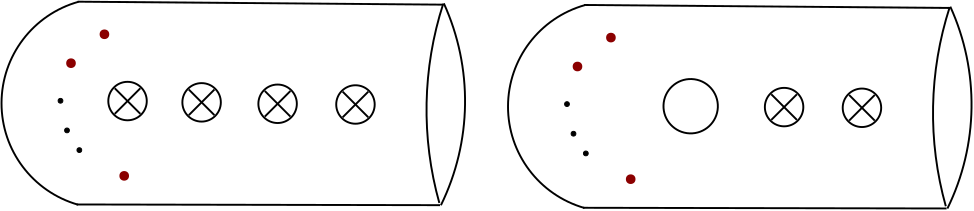}}
\caption{Two different representations of $N_{4,p}^{1}$.}
\label{modelexample_non}
\end{center}
\end{figure}

\section{Generating sets for the mapping class group of nonorientable surfaces}
Let $N_g$ be a nonorientable genus $g$ surface shown in Figure~\ref{NG}. In this section, we present some known finite generating sets\index{generating set} for the mapping class group $\Mod(N_g)$. We also consider the minimal number of elements required for such generating sets. Using Lickorish’s partial results~\cite{lickorishnon1,lickorishnon2}, Chillingworth~\cite{chillingworth} obtained a finite generating set whose elements generate all homeomorphisms of $N_g$. Later, using some ideas from the theory of covering spaces, Birman and Chillingworth~\cite{birman-chillingworth} showed how generators for $\Mod(N_g)$ could be obtained from those of $\Mod(\Sigma_g)$. Szepietowski~\cite{szepietowski} proved that $\Mod(N_g)$ can be generated by involutions using the generators given in~\cite{birman-chillingworth}. He later proved that $\Mod(N_g)$ can be generated by three elements and also four involutions~\cite{szepietowski1}. In~\cite{altunoz-pamuk-yildiz}, this result was improved by proving that for $g \ge 19$, $\Mod(N_g)$ can be generated by two elements, and for $g \ge 26$, by three involutions. Recently, if $g\neq 4$, a generating set consisting of three torsion elements for the mapping class group $\Mod(N_g)$ was obtained in~\cite{lesniak-szepietowski}.
\subsection{Generating sets involving Dehn twists and a crosscap slide}
It is known that $\Mod(N_g)$ is generated by Dehn twists together with one crosscap slide~\cite{lickorishnon1,lickorishnon2}. Chillingworth~\cite{chillingworth} obtained a finite collection of generators for $\Mod(N_g)$. For $g> 4$, Szepietowski~\cite{szepietowski1} decreased the number of Dehn twist generators and proved the following theorem:

\begin{theorem}\label{thmnon1}
The mapping class group $\Mod(N_{g})$ is generated by a crosscap slide and the Dehn twists about the curves in the set $\{t_{a_1}, t_{a_2}\} \cup \{t_{b_i}\}_{i=1}^r \cup \{t_{c_i}\}_{i=1}^{k}$, where $k=r$ if $g$ is even and $k=r-1$ if $g$ is odd.
\end{theorem}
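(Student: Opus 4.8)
The plan is to argue by reduction to a previously known finite generating set. By the results of Lickorish and Chillingworth cited above, $\Mod(N_g)$ is generated by a single crosscap slide $y$ together with Dehn twists about the curves in some explicit finite family $\mathcal{C}$ of two-sided simple closed curves (Chillingworth's set), and our distinguished set is contained in this list. So, writing $H := \langle\, y,\ t_{a_1}, t_{a_2}, t_{b_1},\dots,t_{b_r}, t_{c_1},\dots,t_{c_k}\,\rangle$, it suffices to produce each $t_\gamma$ with $\gamma \in \mathcal{C}$ as a word in the elements of $H$; the crosscap slide itself we may take to be our chosen $y$ (or recover it from $H$ via a conjugacy relation). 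Thus the whole problem is to show $t_\gamma \in H$ for the finitely many curves $\gamma$ of Chillingworth's family.

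First I would isolate the orientable part of the model in Figure~\ref{NG}. Cutting $N_g$ along the one-sided curve through the last crosscap (when $g$ is odd) or along the curve $e$ bounding the Klein bottle with one hole formed by the last two crosscaps (when $g$ is even) produces an orientable subsurface $\Sigma$ with a single boundary component; it has genus $h=\lfloor (g-1)/2 \rfloor$. All but the one or two curves among $a_1,a_2,b_i,c_i$ that genuinely run through these final crosscaps lie in $\Sigma$, and there they form a chain in the sense of Humphries. By Humphries' generation theorem (applied to a surface with one boundary component), these twists generate the image of $\Mod(\Sigma)$ in $\Mod(N_g)$, so $H$ already contains $t_\gamma$ for \emph{every} two-sided curve $\gamma$ that can be isotoped into $\Sigma$. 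This disposes of all curves of $\mathcal{C}$ supported away from the final-crosscap region.

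Next I would use the crosscap slide to capture the remaining curves of $\mathcal{C}$, namely those that cross the last crosscaps. Writing $y=Y_{\mu,\alpha}$ with $\mu$ one-sided and $\alpha$ two-sided, the relations on crosscap slides in Figure~\ref{yhom_relation}, namely $Y_{\mu_1,\alpha}^{-1}Y_{\mu_2,\alpha}=t_\alpha^2$, together with the identity $u_{\mu,\alpha}^2=t_{\partial K}$, let me pin down $t_e \in H$ and, more generally, $t_\beta \in H$ for curves $\beta$ bounding a Klein bottle with one hole. Conjugating $y$ by elements of $\Mod(\Sigma)\subseteq H$ and applying the rule $f Y_{\mu,\alpha}f^{-1}=Y_{f(\mu),f(\alpha)}$ sweeps the distinguished crosscap to any desired position relative to the chain, producing a reservoir of crosscap slides whose pairwise products (via the first relation above) yield further two-sided twists; feeding $e$ and three curves lying inside $\Sigma$ into a lantern relation then lets me solve for the last missing $t_\gamma$'s. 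Assembling these computations gives $\mathcal{C}\subseteq H$, hence $H=\Mod(N_g)$.

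I expect the main obstacle to be precisely this last step: determining exactly which curves of Chillingworth's family fail to embed in $\Sigma$ and writing each corresponding Dehn twist explicitly as a word in $y$ and the subsurface twists, through repeated use of the crosscap-slide relations and of lantern and chain relations on carefully chosen subsurfaces of Figure~\ref{NG}. This is intricate but essentially a finite combinatorial bookkeeping on the curve picture. Two smaller points also require care: verifying that the chain $a_1,a_2,b_i,c_i$ truly is a \emph{generating} configuration for $\Mod(\Sigma)$ and not merely one for a proper subgroup (here Humphries' count of $2h+1$ Dehn twists for the genus-$h$ surface $\Sigma$ matches the size of our set once the exceptional curves are removed), and tracking the sign $s=\pm1$ in $f t_\alpha f^{-1}=t_{f(\alpha)}^s$, which flips whenever a homeomorphism drags a curve across a one-sided curve; these signs lengthen the words but never obstruct membership in $H$.
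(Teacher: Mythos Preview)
The paper does not give its own proof of this theorem: it is stated as a result of Szepietowski \cite{szepietowski1}, with no argument supplied beyond the remark that for $g=4$ the set coincides with Chillingworth's and for $g=3$ one appeals to \cite{birman-chillingworth}. So there is nothing in the text to compare your proposal against line by line.

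That said, your outline is the right one and matches Szepietowski's original strategy: start from the Lickorish--Chillingworth generating set, observe that the curves $a_1,a_2,b_i,c_i$ form a Humphries configuration on the orientable subsurface obtained by excising the nonorientable handle(s), conclude that $H$ contains $\Mod(\Sigma)$ and hence every twist about a curve isotopic into $\Sigma$, and then mop up the remaining Chillingworth curves using the crosscap slide and relations. Your self-identified ``main obstacle'' is exactly where the work lies in Szepietowski's argument.

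Two small cautions. First, be careful with the parity bookkeeping: in the model of Figure~\ref{NG} the last curve of the chain ($b_r$ when $g$ is odd, $c_r$ when $g$ is even) genuinely interacts with the final crosscap(s), so it does not sit entirely inside $\Sigma$; you need it both to complete the Humphries count \emph{and} to connect $\Mod(\Sigma)$ to the crosscap region, and the argument must reflect this dual role rather than treating it as purely orientable. Second, your appeal to a lantern relation to recover ``the last missing $t_\gamma$'s'' is plausible but vague; in practice Szepietowski does not need a lantern here, and the cleaner route is direct conjugation of the extra Chillingworth curves into $\Sigma$ by elements you already have in $H$ (including the crosscap slide), using the change-of-coordinates principle. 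Your plan is sound, but when you fill in the details you will likely find the lantern step unnecessary.
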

If $g = 4$, then the generating set given by Chillingworth coincides with the generating set given in Theorem~\ref{thmnon1}. In the case $g=3$, $\Mod(N_3)$ is generated by the Dehn twists $\{ t_{a_1}, t_{b_1} \}$ and a crosscap slide~\cite{birman-chillingworth}.
\subsection{Minimal number of generators for $\Mod(N_g)$ }
For orientable surfaces of genus $ \geq 2 $, it is known that the mapping class group can be generated by two elements~\cite{wajnryb}. The nonorientable setting introduces additional complexity compared to the orientable case. A key distinction is the behavior of crosscap slides: unlike in the orientable case, these cannot be represented as compositions of Dehn twists. The abelianization\index{abelianization} of $\Mod(N_g)$ shows it cannot be cyclic, implying that any generating set must contain at least two elements. In this section, we present some known minimal generating sets for the mapping class group $\Mod(N_g)$. We give a brief proof of one of these results to explain the method used.

For \( g = 3 \), set \( a: = t_{a_1} \) and \( b := t_{b_1} \). The mapping class group \(\Mod(N_3)\) has a presentation with generators \( a, b \), a crosscap slide \( y \), and relations:
\[
aba = bab, \quad (aba)^4 = 1, \quad yay = a^{-1}, \quad yby = b^{-1}, \quad y^2 = 1
\]
as shown in \cite{birman-chillingworth}. Korkmaz \cite{korkmaz2012} later proved that \( \Mod(N_3) \) is generated by \( a \) and \( by \), which follows from the identities:
\[
(by)a^{-1}(by)^{-1} = bab^{-1}, \quad a(bab^{-1})a^{-1} = b.
\]
Consider the product of $g-1$ Dehn twists:
\[
S = \begin{cases}
t_{c_r}t_{b_r}t_{c_{r-1}}t_{b_{r-1}}\cdots t_{c_1}t_{b_1}t_{a_1} & \text{if } g \text{ is even} \\
t_{b_r}t_{c_{r-1}}t_{b_{r-1}}t_{c_{r-2}}\cdots t_{c_1}t_{b_1}t_{a_1} & \text{if } g \text{ is odd}
\end{cases}
\]
One can verify that $S$ acts on the curves as:
\[
S^{-1}(a_1)=b_1,\quad S^{-1}(b_i)=c_i,\quad S^{-1}(c_i)=b_{i+1} \text{ for } i\geq1.
\]
For any $\alpha\in \{a_1,b_i,c_i\}$, a subgroup $G\subset\Mod(N_g)$ containing both $t_{\alpha}$ and $S$ must contain all Dehn twists about the curves $ \{a_1,b_i,c_i\}$, making $S$ crucial for the proof of the next theorem. Let $y=t_e^2$ be a crosscap slide, and let $\phi$ be a homeomorphism supported in the Klein bottle bounded by $e$ such that $y$ can be expressed as $\phi t_{c_r}$ (or $\phi t_{b_r}$ when $g=2r+2$). Szepietowski~\cite{szepietowski1} proved that $\Mod(N_g)$ can be generated by three elements. He proved the following theorem.
\begin{theorem}\label{thmnon2}
Let $N_g$ be a closed nonorientable surface of genus $g=2r+1$ or $g=2r+2$, where $r\geq2$. The mapping class group $\Mod(N_g)$ is generated by the collection
\begin{itemize}
    \item $\{ t_{a_2}, S, \phi \}$ ,
    \item $\{ t_{a_2}, S, \phi t_{b_2} \}$ if $g$ is even and $g\geq 8$,
     \item $\{ t_{a_2}, S, \phi t_{b_1} \}$ if $g=6$, 
       \item $\{ t_{a_1}, St_{a_2}, \phi \}$ if $g=4$.
\end{itemize}
\end{theorem}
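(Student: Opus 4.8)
The plan is to show that the subgroup $G\leq\Mod(N_g)$ generated by any one of the listed triples contains all the Dehn twists $t_{a_1},t_{a_2},\{t_{b_i}\},\{t_{c_i}\}$ appearing in Theorem~\ref{thmnon1} together with a crosscap slide; Theorem~\ref{thmnon1} then forces $G=\Mod(N_g)$. The argument has two engines: the $\langle S\rangle$-propagation of Dehn twists recorded just before the statement, and the extraction, from $\phi$, of both a crosscap slide and a single ``seed'' twist lying in the $\langle S\rangle$-orbit $\{a_1\}\cup\{b_i\}\cup\{c_i\}$.

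I would do $\{t_{a_2},S,\phi\}$ first. From $St_\beta S^{-1}=t_{S(\beta)}^{\pm1}$ and $S^{-1}(a_1)=b_1$, $S^{-1}(b_i)=c_i$, $S^{-1}(c_i)=b_{i+1}$, the curves $\{a_1\}\cup\{b_i\}\cup\{c_i\}$ form a single $\langle S\rangle$-orbit, so it suffices to put one Dehn twist about one of these curves into $G$; all the others then follow by conjugating by powers of $S$. To produce that seed twist I would use that $\phi$ is supported in the Klein bottle $K$ bounded by $e$ and that $y=\phi\,t_{c_r}$ (resp.\ $\phi\,t_{b_r}$) for a crosscap slide $y$, so that $G\ni\phi$ while $t_{c_r}$ (resp.\ $t_{b_r}$) is a twist about a curve of $K$, hence about an orbit curve. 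Picking $k$ so that the curve carrying $S^k t_{a_2}S^{-k}$ meets $K$ in exactly one two-sided curve, a short word in $\phi$ and $S^k t_{a_2}S^{-k}$ — a commutator, or a commutator times a twist — should collapse to a power of a single Dehn twist about a curve in $K$; combined with the relation $y=\phi t_{c_r}$ (resp.\ $\phi t_{b_r}$) and $S$-conjugation this isolates $t_{c_r}$ (resp.\ $t_{b_r}$) in $G$. Once one orbit twist lies in $G$, propagation gives $t_{a_1}$ and all $t_{b_i},t_{c_i}$; in particular the twist factor of $y$ lies in $G$, hence $y\in G$, and together with $t_{a_2}\in G$ the whole generating set of Theorem~\ref{thmnon1} is contained in $G$.

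The other triples need only minor adjustments. For $\{t_{a_2},S,\phi t_{b_2}\}$ with $g$ even and $g\geq8$, and $\{t_{a_2},S,\phi t_{b_1}\}$ with $g=6$, one runs the seeding and propagation so that all the $t_{b_i}$ enter $G$ and then recovers $\phi$ as $(\phi t_{b_2})t_{b_2}^{-1}$, resp.\ $(\phi t_{b_1})t_{b_1}^{-1}$; the genus hypotheses are exactly what make $b_2$ (resp.\ $b_1$) disjoint from the support of $\phi$, so that $\phi t_{b_j}$ seeds the orbit just as $\phi$ does. For the small case $g=4$ with $\{t_{a_1},St_{a_2},\phi\}$, the seed twist $t_{a_1}$ is given outright, so only propagation and the crosscap slide remain: one checks how $St_{a_2}$ permutes $a_1,b_i,c_i$ (it does so essentially as $S$ does, modulo the interaction of $a_2$ with these curves), propagates to all the $t_{b_i},t_{c_i}$, and then uses $\phi\,t_{c_r}$ (resp.\ $\phi\,t_{b_r}$) to recover a crosscap slide.

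I expect the seeding step for $\{t_{a_2},S,\phi\}$ to be the main obstacle. The $\langle S\rangle$-propagation is handed to us, and recovering the crosscap slide from $\phi$ is immediate once an orbit twist is available, but bridging $t_{a_2}$ — which does not lie in the $\langle S\rangle$-orbit — into that orbit genuinely uses the geometry of $\phi$ near the last two crosscaps. Making this precise requires choosing a conjugate of $t_{a_2}$ whose curve meets the Klein bottle $K$ in exactly one two-sided curve, and then verifying both that the resulting short word degenerates to a single Dehn twist and identifying which orbit curve it twists about.
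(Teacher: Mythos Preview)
The paper does not supply its own proof of this theorem; it is quoted from Szepietowski~\cite{szepietowski1}. So there is no in-paper argument to compare against directly, but your proposal can be measured against the proof in the source.

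Your global plan is correct: reduce to Theorem~\ref{thmnon1} by showing that the subgroup $G$ contains $t_{a_1},t_{a_2},\{t_{b_i}\},\{t_{c_i}\}$ and a crosscap slide, and use $\langle S\rangle$-propagation once a single chain twist is in hand. The problem is your seeding step. You propose to manufacture a chain twist from a short word in $\phi$ and a conjugate $S^kt_{a_2}S^{-k}$, hoping this ``collapses to a power of a single Dehn twist about a curve in $K$''. There is no mechanism for such a collapse: $\phi$ is (a conjugate of) a crosscap transposition supported in the one-holed Klein bottle $K$, and a commutator of such an element with a Dehn twist about a curve that merely passes through $K$ yields $t_{\phi(\gamma)}^{\pm1}t_{\gamma}^{-1}$, which is not a single twist unless $\phi(\gamma)=\gamma$. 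Your phrase ``meets $K$ in exactly one two-sided curve'' is also not well-posed, since $S^k(a_2)\cap K$ consists of arcs, not closed curves. You correctly flag this as the main obstacle, but the route you sketch does not clear it.

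In Szepietowski's actual argument the seeding has nothing to do with $\phi$. For $r\geq 2$ one shows directly that $\langle S,t_{a_2}\rangle$ already contains a chain twist---hence, by $\langle S\rangle$-propagation, all of $t_{a_1},t_{b_i},t_{c_i}$---using only the position of $a_2$ relative to the chain and conjugation by $S$. The element $\phi$ enters solely at the end, to supply the crosscap slide $y=\phi\,t_{\mathrm{last}}$ once $t_{\mathrm{last}}$ is already known to lie in $G$. This division of labour also explains the $g=4$ anomaly, which your approach does not: when $r=1$ the chain is too short for the $\langle S,t_{a_2}\rangle$ argument, which is exactly why the fourth generating set hands $G$ the seed $t_{a_1}$ explicitly. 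If your $\phi$-based seeding worked, there would be no structural reason for $g=4$ to require a different triple.
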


The authors~\cite{altunoz-pamuk-yildiz} proved that for $g\geq 19$, two elements suffice to generate the mapping class group $\Mod(N_g)$, which is the smallest possible set of generators. This result is proved briefly below.
\begin{figure}[h]
\begin{center}
\scalebox{0.33}{\includegraphics{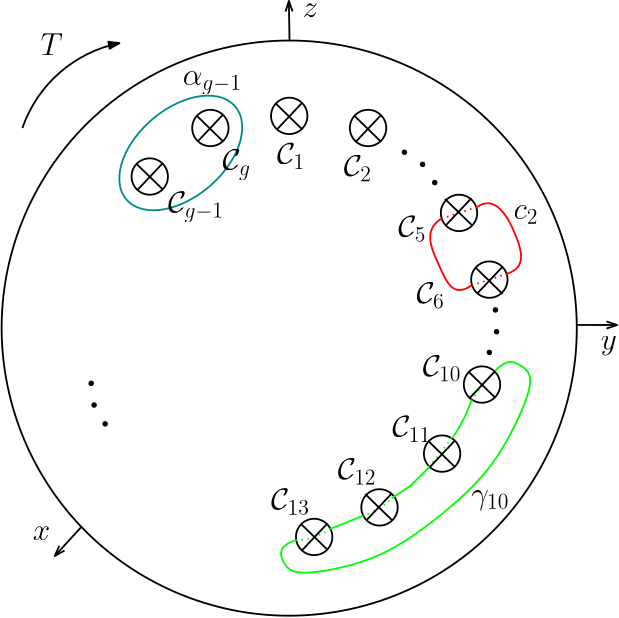}}
\caption{The rotation $T$ and the curves $\alpha_{g-1}, c_2$ and $\gamma_{10}$ on $N_g$.}
\label{rotationT}
\end{center}
\end{figure}
Let us consider the model for $N_g$ shown in Figure~\ref{rotationT}, in which the crosscaps are labeled as $\mathcal{C}_i$. The rotation $T$ by $\frac{2\pi}{g}$ about the $x$-axis maps the crosscap $\mathcal{C}_i$ to $\mathcal{C}_{i+1}$ for $i=1,\ldots,g-1$ and $\mathcal{C}_g$ to $\mathcal{C}_1$, as depicted in Figure~\ref{rotationT}. We denote by $u_i$ the crosscap transposition defined on the one-holed Klein bottle whose boundary curve is $\alpha_i$ shown in Figure~\ref{nongenerators1}. Since  $T$ maps $\alpha_i$ to $\alpha_{i+1}$, the conjugation by $T$ yields $Tu_iT^{-1} = u_{i+1}$.
\begin{figure}[h]
\begin{center}
\scalebox{0.3}{\includegraphics{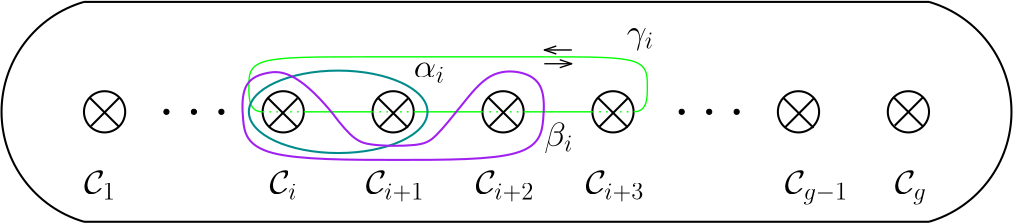}}
\caption{The curves $\alpha_i, \beta_i$ and $\gamma_i$ on $N_g$.}
\label{nongenerators1}
\end{center}
\end{figure}
\begin{theorem}\label{theoremnon3}
 The mapping class group $\Mod(N_g)$ for $g\geq7$ can be generated by the elements $T$, $t_{a_1}t_{a_2}^{-1}$, $t_{b_1}t_{ b_2}^{-1}$ and $u_{g-1}$.
\end{theorem}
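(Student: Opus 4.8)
The plan is to show that the subgroup $G = \langle T, t_{a_1}t_{a_2}^{-1}, t_{b_1}t_{b_2}^{-1}, u_{g-1}\rangle$ contains a known generating set of $\Mod(N_g)$, for which Theorem~\ref{thmnon1} (a crosscap slide together with the Dehn twists $t_{a_1}, t_{a_2}, t_{b_i}, t_{c_i}$) is the natural target. The key mechanism is conjugation by the rotation $T$: since $T$ cyclically permutes the crosscaps $\mathcal{C}_i$, it permutes the one-sided curves through consecutive crosscaps and hence the associated curves $\alpha_i, \beta_i, \gamma_i$ of Figure~\ref{nongenerators1}, so that $T x T^{-1}$ runs a given twist or crosscap transposition through a whole $T$-orbit of curves. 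In particular $T u_{g-1} T^{-1} = u_g$ and more generally $T^k u_{g-1} T^{-k} = u_{g-1+k}$ (indices mod $g$), so $G$ contains \emph{all} the crosscap transpositions $u_i$, and therefore all crosscap slides $Y_{\mu_i,\alpha_i} = t_{\alpha_i} u_i$ once we know $G$ contains the relevant Dehn twists.

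Next I would extract honest Dehn twists from the ``difference'' elements $t_{a_1}t_{a_2}^{-1}$ and $t_{b_1}t_{b_2}^{-1}$. The idea is standard in this area: conjugating a product $t_\gamma t_\delta^{-1}$ of twists about disjoint curves by a suitable element of $G$ (built from powers of $T$ and from the $u_i$'s) produces another product $t_{\gamma'}t_{\delta'}^{-1}$; by telescoping a chain of such differences along a $T$-orbit, consecutive terms cancel and one is left with a single twist $t_{\gamma'} t_{\delta}^{-\epsilon}$ in which $\delta$ can be arranged to coincide with a curve appearing in another difference, ultimately isolating individual $t_{a_i}$, $t_{b_i}$. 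Concretely, since $T$ sends $a_1\mapsto a_2$-type and $b_i\mapsto b_{i+1}$-type curves, the elements $(t_{a_1}t_{a_2}^{-1})(t_{b_1}t_{b_2}^{-1})$ and their $T$-conjugates generate a subgroup of the twist subgroup whose ``boundary'' (in the sense of the abelianization of a free subgroup on the twists) collapses to give each $t_{a_i}$ and $t_{b_i}$; once one such twist is obtained, applying $T$-conjugation yields all of them, and then the relation $Y_{\mu_1,\alpha}^{-1}Y_{\mu_2,\alpha} = t_\alpha^2$ together with $u_i^2 = t_{\partial K}$ lets one also recover the $t_{c_i}$ (or the curve $e$) from the crosscap transpositions already in $G$.

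Once $G$ is shown to contain every $t_{a_1}, t_{a_2}, t_{b_i}, t_{c_i}$ and at least one crosscap slide, Theorem~\ref{thmnon1} immediately gives $G = \Mod(N_g)$, completing the proof; the hypothesis $g \geq 7$ enters precisely because that is the range in which the twist subgroup is perfect and the curve configurations of Figure~\ref{nongenerators1} have enough room for the telescoping arguments (and for $g<7$ sporadic presentations as in~\cite{birman-chillingworth, korkmaz2012} are used instead). The main obstacle I anticipate is the bookkeeping in the telescoping step: one must verify that the specific curves $a_i, b_i, c_i$ of the model in Figure~\ref{NG} are genuinely carried onto one another by the rotation $T$ of the model in Figure~\ref{rotationT} (these are \emph{a priori} different pictures of $N_g$), so a careful identification of the two models, and a check of the geometric intersection patterns needed to make the partial twists commute or braid as required, is the technical heart of the argument. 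The algebraic consequences — collapsing differences to single twists, then spreading via $T$, then building crosscap slides — are then routine given those geometric facts.
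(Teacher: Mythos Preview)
Your overall strategy --- show that $G$ contains the generating set of Theorem~\ref{thmnon1} --- matches the paper's, and using $T$-conjugation to propagate elements around the surface is exactly right. However, there is a genuine gap in the central step where you extract a \emph{single} Dehn twist from the ``difference'' elements.

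Telescoping alone does not do this. If $G$ contains $t_{\gamma_1}t_{\gamma_2}^{-1}, t_{\gamma_2}t_{\gamma_3}^{-1},\ldots$, then multiplying gives $t_{\gamma_1}t_{\gamma_n}^{-1}$, still a difference; and since $T$ acts by permuting the curves, conjugation by $T$ only produces more differences. Your appeal to the ``boundary in the abelianization of a free subgroup on the twists'' is not an argument: abelianizing only tells you which exponents appear, not that an individual $t_{a_1}$ lies in $G$. Likewise, the relations $u_i^2 = t_{\partial K}$ and $Y^{-1}Y' = t_\alpha^2$ give you twists about very specific curves (boundaries of Klein-bottle neighborhoods), not about the chain curves $a_i, b_i, c_i$ you need.

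What is missing is precisely the ingredient the paper names: the \emph{lantern relation}. In the actual proof (see \cite[Theorem~2.1]{altunoz-pamuk-yildiz}), once enough differences $t_\gamma t_\delta^{-1}$ have been assembled via $T$-conjugation, a lantern relation on a suitable $4$-holed sphere in $N_g$ lets you write one of the boundary twists as a product of three others and four interior twists, and the combinatorics are arranged so that all but one factor are already expressible in $G$, isolating a genuine $t_{a_1}$ (or similar). The condition $g\ge 7$ is there to give enough room for this lantern configuration to sit disjointly from the curves you are manipulating --- not, as you guess, because $\mathcal{T}_g$ is perfect. Once one individual twist is in hand, your spreading-by-$T$ idea finishes the job.
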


The proof proceeds by verifying that the standard generators from Theorem~\ref{thmnon1} are contained in the subgroup generated by $T$, $t_{a_1}t_{a_2}^{-1}$, $t_{b_1}t_{ b_2}^{-1}$ and $u_{g-1}$. The lantern relation\index{lantern relation} also plays a vital role in this proof.
\begin{theorem}\label{theoremnon4}
 The mapping class group $\Mod(N_g)$ for $g\geq19$ is generated by the elements $T$ and $u_{g-1}t_{\gamma_{10}}t_{c_2}^{-1}$.
\end{theorem}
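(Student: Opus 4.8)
The plan is to reduce to Theorem~\ref{theoremnon3}. Since $g \geq 19 \geq 7$, that theorem applies, so it suffices to prove that the subgroup $G := \langle T,\, F \rangle$, where $F := u_{g-1} t_{\gamma_{10}} t_{c_2}^{-1}$, contains the three elements $u_{g-1}$, $t_{a_1}t_{a_2}^{-1}$ and $t_{b_1}t_{b_2}^{-1}$ (it already contains $T$). The basic mechanism is conjugation by the order-$g$ rotation $T$: since $T$ cyclically shifts the crosscaps $\mathcal{C}_i \mapsto \mathcal{C}_{i+1}$, it carries $\alpha_i \mapsto \alpha_{i+1}$ and acts on the curve families appearing in $F$ by the analogous index shift $\gamma_i \mapsto \gamma_{i+1}$, $c_i \mapsto c_{i+1}$ in the cyclic labelling of the model of Figure~\ref{rotationT}. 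Consequently $G$ contains all conjugates
\[
F_k := T^{k} F T^{-k} = u_{g-1+k}\, t_{\gamma_{10+k}}\, t_{c_{2+k}}^{-1},
\]
with indices read cyclically, so that the single relator $F$ supplies a whole $T$-orbit of relators to manipulate.

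The next ingredient is a support and commutation analysis. For $g \geq 19$ the one-holed Klein bottle bounded by $\alpha_{g-1}$, the curve $\gamma_{10}$ and the curve $c_2$ are pairwise disjoint, since the three subscripts $g-1$, $10$, $2$ are spread far enough apart in the standard model; hence the three factors of $F$ pairwise commute, and, more to the point, the overlap pattern of any two conjugates $F_k$, $F_\ell$ is governed purely by the differences of their indices. This is what makes the manipulations below legitimate.

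With these in hand, I would first \emph{decouple} $F$ into its crosscap part and its twist part. Concretely, choose shifts $k_1,\dots,k_m$ and signs so that in the product $F_{k_1}^{\pm}\cdots F_{k_m}^{\pm}$ the crosscap transpositions $u_{g-1+k_j}$ occur in cancelling inverse pairs (using $u_i u_j = u_j u_i$ for indices far apart and pushing all $u$'s to one side), while the surviving Dehn-twist factors are about curves on which we already have independent control by further $T$-conjugation. Telescoping then leaves a word purely in twists $t_{\gamma_\bullet}, t_{c_\bullet}$, and comparison with $F$ itself yields $t_{\gamma_{10}}t_{c_2}^{-1} \in G$, whence $u_{g-1} = F\,(t_{\gamma_{10}}t_{c_2}^{-1})^{-1} \in G$. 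Conjugating by powers of $T$ then puts all $u_i$ and all $t_{\gamma_{10+k}}t_{c_{2+k}}^{-1}$ into $G$. From this family, together with $T$ and the $u_i$'s, one manufactures $t_{a_1}t_{a_2}^{-1}$ and $t_{b_1}t_{b_2}^{-1}$ exactly as in the proof of Theorem~\ref{theoremnon3}: the lantern relation among the relevant $c$- and $\gamma$-curves expresses a product of Dehn twists which, combined with what is already available, isolates the two remaining generators. Theorem~\ref{theoremnon3} then gives $G = \Mod(N_g)$.

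The main obstacle is precisely this decoupling step. There is an apparent circularity — given $F$, each of $u_{g-1}$ and $t_{\gamma_{10}}t_{c_2}^{-1}$ is recoverable from the other, yet neither is manifestly in $G$ — and breaking it requires an explicit word in the $F_k$ whose crosscap-transposition content cancels while its twist content stays recognizable. The numerology (the index $10$, and the bound $g\ge 19$) is dictated by the requirement that every curve entering this word and the subsequent lantern configuration sit in the model with the intended intersection pattern: disjoint where commutation is used, meeting once where a braid or lantern relation is invoked. For smaller $g$ these curves begin to collide and the argument breaks down, which is why the statement is restricted to $g \geq 19$.
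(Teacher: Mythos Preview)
Your overall architecture is correct: reduce to Theorem~\ref{theoremnon3}, exploit the $T$-orbit of $F$, and recover $u_{g-1}$, $t_{a_1}t_{a_2}^{-1}$, $t_{b_1}t_{b_2}^{-1}$. The problem is the ``decoupling'' step, which as written does not work and hides the only non-routine idea in the proof.

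You propose to multiply conjugates $F_{k_1}^{\pm}\cdots F_{k_m}^{\pm}$ so that ``the crosscap transpositions $u_{g-1+k_j}$ occur in cancelling inverse pairs'' while a nontrivial twist word survives. But a pair $u_{g-1+k}$ and $u_{g-1+k}^{-1}$ can only come from $F_k$ and $F_k^{-1}$, and those cancel \emph{all} three factors simultaneously; distinct shifts give distinct $u_i$'s, which neither commute to the identity nor become Dehn twists. So no word of the shape you describe separates the $u$-part from the twist-part. (Squaring would turn $u_{g-1}^2$ into the boundary twist $t_{\alpha_{g-1}}$, but that is not what you wrote, and in any case it does not directly yield $t_{\gamma_{10}}t_{c_2}^{-1}$.) This is exactly the circularity you flag, and your proposal does not actually break it.

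The paper breaks it with a different mechanism: conjugation by \emph{products of the $F_k$ already in $G$}, not just by powers of $T$. Concretely, set $G_1=F$ and $G_2=G_1^{T^{-4}}=u_{g-5}t_{\gamma_6}t_{a_1}^{-1}$. The element $G_2G_1^{-1}\in G$ is a diffeomorphism that fixes $\alpha_{g-5}$ and $a_1$ but carries $\gamma_6$ to $c_2$; conjugating $G_2$ by it gives $G_3=u_{g-5}t_{c_2}t_{a_1}^{-1}$, whence $G_2G_3^{-1}=t_{\gamma_6}t_{c_2}^{-1}\in G$. This is the step that strips off the crosscap transposition while keeping a genuine twist-ratio, and it is not visible from your telescoping picture. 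After this, the paper proceeds essentially as you outline: further $T$- and $G_iG_j^{-1}$-conjugations produce $t_{b_1}t_{b_2}^{-1}$ and $t_{a_1}t_{a_2}^{-1}$, then the argument of Theorem~\ref{theoremnon3} gives the individual twists, and finally $u_{g-1}=G_1(t_{c_2}t_{\gamma_{10}}^{-1})$. A minor correction: $T$ sends $b_i\mapsto c_i$ and $c_i\mapsto b_{i+1}$, so it is $T^2$, not $T$, that shifts $c_i\mapsto c_{i+1}$; this matters when you track the index arithmetic that forces $g\ge 19$.
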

\begin{proof}
    Denote by $G_1$ the element $u_{g-1}t_{\gamma_{10}}t_{c_2}^{-1}$. Let $G$ be the subgroup of $\Mod(N_g)$ generated by $T$ and $G_1$. By Theorem~\ref{theoremnon3}, proving $G = \Mod(N_g)$ reduces to showing that the subgroup $G$ contains $t_{a_1}t_{a_2}^{-1}$, $t_{b_1}t_{b_2}^{-1}$ and $u_{g-1}$. The diffeomorphism $T^{-4}$ sends the curves $(\alpha_{g-1},\gamma_{10}, c_2)$ to $(\alpha_{g-5},\gamma_{6},a_1)$. Thus, the subgroup $G$ contains the conjugated element
\begin{itemize}
    \item $G_2:=G_1^{T^{-4}}=u_{g-5}t_{\gamma_{6}}t_{a_1}^{-1}$.
\end{itemize}
It can be verified that $G_2G_1^{-1}(\alpha_{g-5},\gamma_{6}, a_1)=(\alpha_{g-5},c_{2}, a_1)$. By the conjugation relation, we get the following element:
\begin{itemize}
    \item $G_3:=G_2^{G_2G_1^{-1}}=u_{g-5}t_{c_{2}}t_{a_1}^{-1} \in G$.
\end{itemize}
Thus, $G$ contains 
\begin{itemize}
    \item $t_{\gamma_{6}}t_{c_2}^{-1}=G_2G_3^{-1}$,
\end{itemize}
and so, we have
\begin{itemize}
\item $t_{\gamma_{10}}t_{c_4}^{-1}=(t_{\gamma_{6}}t_{c_2}^{-1})^{T^4} \in G$.
\end{itemize}
Moreover, the following elements are shown to be in $G$:
\begin{itemize}
\item $G_4:=(t_{c_4}t_{\gamma_{10}}^{-1})G_1=u_{g-1}t_{c_{4}}t_{c_2}^{-1}$,
\item $G_5:=G_4^{T^{-1}}=u_{g-2}t_{b_{4}}t_{b_2}^{-1}$,
\item $G_6:=G_3^{G_4G_5}=u_{g-5}t_{b_{2}}t_{a_1}^{-1}$.
\end{itemize}
From this, $G$ contains
\begin{itemize}
\item $t_{b_{2}}t_{c_2}^{-1}=G_6G_3^{-1}$,
\item $t_{c_{2}}t_{b_2}^{-1}=(t_{b_{2}}t_{c_2}^{-1})^T$,
\item $t_{b_{2}}t_{b_3}^{-1}=(t_{b_{2}}t_{c_2}^{-1})(t_{c_{2}}t_{b_3}^{-1})$,
\end{itemize}
which implies that
\begin{itemize}
\item $t_{b_{1}}t_{b_2}^{-1}=(t_{b_{2}}t_{b_3}^{-1})^T\in G$.
\end{itemize}
One can also show that the following elements belong to $G$: \begin{itemize}
\item $t_{c_{3}}t_{b_4}^{-1}=(t_{c_{2}}t_{b_3}^{-1})^{T^2}$,
\item $t_{\gamma_{8}}t_{c_3}^{-1}=(t_{\gamma_{10}}t_{c_4}^{-1})^{T^{-2}}$,
\item $t_{\gamma_{8}}t_{b_4}^{-1}=(t_{\gamma_{8}}t_{c_3}^{-1})(t_{c_{3}}t_{b_4}^{-1})$.
\end{itemize}
From these, it follows that
\begin{itemize}
\item $t_{a_{2}}t_{a_1}^{-1}=(t_{\gamma_{1}}t_{a_1}^{-1})=(t_{\gamma_{8}}t_{b_4}^{-1})^{T^{-7}}\in G$.
\end{itemize}
 Following the proof of Theorem~\ref{theoremnon3} (see~\cite[Theorem 2.1]{altunoz-pamuk-yildiz}), the subgroup $G$ contains the elements $t_{a_1}$, $t_{a_2}$, $t_{b_i}$ and $t_{c_i}$ for $i=1,\ldots,r$. Then,
\begin{itemize}
\item 
$t_{\gamma_{10}}=t_{a_2}^{T^9}\in G.$
\end{itemize}
  It follows that the crosscap transposition
 \begin{itemize}
\item $u_{g-1}=G_1(t_{c_2}t_{\gamma_{10}}^{-1})\in G$,
\end{itemize}
concluding the proof.
\end{proof}
 
 \begin{remark}\label{nonremark}
For $g=4$, the abelianization $H_1(\Mod(N_4))$ is isomorphic to $(\mathbb{Z}/2\mathbb{Z})^3$~\cite{korkmaz1998}, which implies that $\Mod(N_4)$ cannot be generated by two elements.
 \end{remark}
\subsection{Torsion generators of $\Mod(N_g)$}\label{torsionnonorientable}

For $g \geq 7$ the normal closure of any finite-order element in $\Mod(N_g)$ is either the subgroup of $\Mod(N_g)$ generated by Dehn twists, known as the \textit{twist subgroup}\index{twist subgroup}, or the entire group~\cite{lesniak}. If $2\leq g \leq 6$, $\Mod(N_g)$ fails to be normally generated by one element since it has a non-cyclic abelianization~\cite{korkmaz1998}. For $g=4k+3 \geq 7$, Du~\cite{du} obtained a generating set\index{generating set!torsion} for $\Mod{(N_g)}$ consisting of two involutions and one element of order $2g$. Related work by Leśniak and Szepietowski~\cite{lesniak-szepietowski} further obtained torsion generators for $\Mod{(N_g)}$. We state their main results here and outline a proof for one of these results. Define four torsion elements of $\Mod(N_g)$:

\begin{itemize}
\item $s= 
    \begin{cases}
t_{a_1}t_{b_1}t_{c_1}t_{b_2}t_{c_2}\cdots t_{b_r}t_{c_r} & \text{if } g=2r+2, \\ t_{a_1}t_{b_1}t_{c_1}t_{b_2}t_{c_2}\cdots t_{c_{r-1}}t_{b_r} & \text{if } g=2r+1.
 \end{cases}$
    \item $s^{\prime}= 
    \begin{cases}
t_{a_1}^2t_{b_1}t_{c_1}t_{b_2}t_{c_2}\cdots t_{b_r}t_{c_r} & \text{if } g=2r+2, \\ t_{a_1}t_{b_1}t_{c_1}t_{b_2}t_{c_2}\cdots t_{c_{r-1}}t_{b_r} & \text{if } g=2r+1.
 \end{cases}$
\end{itemize}
The element $s$ has order $g$ when $g$ is even and order $2g$ when $g$ is odd, while $s^{\prime}$ has order $g-1$ for even $g$ and $2(g-1)$ for odd $g$~\cite{Paris-Szepietowski2015}. Following our earlier definition, $u_i$ is the crosscap transposition associated with a one-holed Klein bottle whose boundary curve is $\alpha_i$ (Figure~\ref{nongenerators1}).
\begin{itemize}
\item $t= u_1 u_2\cdots u_{g-1}$
\item $t^{\prime}= u_2 u_3\cdots u_{g-1}$    
\end{itemize}
The elements $t$ and $t^{\prime}$ have orders $g$ and $g-1$, respectively. Note that $t$ and $t^{\prime}$ are conjugates of the rotation shown in~\cite[Figure 4]{lesniak-szepietowski}. It is easy to verify that the elements $s$ and $t$ map the curves $ \lbrace a_1,b_i,c_i \rbrace $ to $\lbrace b_1,c_i,b_{i+1}\rbrace $, respectively. It is known that $\Mod(N_3)$ is generated by the Dehn twists $\{ t_{a_1}, t_{b_1} \}$ and a crosscap slide~\cite{birman-chillingworth}. It follows that $\Mod(N_3)$ is generated by  $s=t_{a_1}t_{b_1}$ of order six, $s^{\prime}=t_{a_1}^2t_{b_1}$ of order four, and the crosscap transposition $u_2$ of order two. For $g=4$, the abelianization of $\Mod(N_4)$ is $\mathbb{Z}_2^3$ (see Remark~\ref{nonremark}). However, it remains an open question whether $\Mod(N_4)$ can be generated by three torsions. Let 
$y= 
    \begin{cases}
t_{b_r}u_{g-1} & \text{if } g \text{ is odd}, \\ t_{c_r}u_{g-1} & \text{if } g \text{ is even}.
 \end{cases}$
\begin{theorem}\label{torsionnonthm}
    For $g\neq 4$, the mapping class group $\Mod(N_g)$ is generated by three torsion elements.
\end{theorem}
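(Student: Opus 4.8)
The plan is to split the argument by the parity of $g$ and, in each case, exhibit three torsion elements drawn from the list $\{s, s', t, t', u_i, y\}$ whose generated subgroup $G$ is shown to contain the generators of Theorem~\ref{thmnon1}, namely $t_{a_1}$, $t_{a_2}$, the twists $t_{b_i}$, $t_{c_i}$, and one crosscap slide. The natural candidate generating set is something like $\{s, t, y\}$ (or $\{s', t', y\}$), because $s$ and $t$ both realize the same cyclic permutation $\{a_1, b_i, c_i\} \to \{b_1, c_i, b_{i+1}\}$ on curves, so the product $s t^{-1}$ (resp. $t^{-1} s$) fixes each of these curves setwise and hence is supported away from them in a controlled way — this is the torsion analogue of the element $S$ used in the two-element results, and the key point is that a subgroup containing $s$ together with a single twist $t_{\alpha}$ for $\alpha \in \{a_1, b_i, c_i\}$ automatically contains all the twists $t_{a_1}, t_{b_i}, t_{c_i}$ by conjugating by powers of $s$.

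First I would record, citing \cite{Paris-Szepietowski2015} and the curve-permutation facts already stated in the excerpt, the exact orders of $s, s', t, t'$ and the precise action of each on the configuration of curves $\alpha_i, \beta_i, \gamma_i, a_j, b_i, c_i$. Then, working inside $G = \langle s, t, y \rangle$, I would compute the element $w = s t^{-1}$ (adjusting conventions so it fixes $\{a_1, b_i, c_i\}$) and analyze its support: it should be a product of crosscap transpositions / twists localized near the crosscaps, and comparing $w$ with appropriate conjugates should extract a single crosscap transposition $u_i$, or equivalently $t_{\partial K} = u_i^2$ for a Klein-bottle boundary $\partial K$. Simultaneously, $y = t_{c_r} u_{g-1}$ (even $g$) or $t_{b_r} u_{g-1}$ (odd $g$) contributes the "last crosscap" data; combining $y$ with the $u_i$ obtained from $w$ and with conjugates by $t$ should produce a genuine Dehn twist $t_{\alpha}$ about one of the curves $\{a_1, b_i, c_i\}$. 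Once one such twist is in $G$, conjugation by powers of $s$ fills in all of $t_{a_1}, t_{b_i}, t_{c_i}$; then $t_{a_2}$ is recovered as a conjugate (as in the proof of Theorem~\ref{theoremnon4}, where $t_{a_2}t_{a_1}^{-1} \in G$ together with $t_{a_1} \in G$), and finally a crosscap slide is recovered since $G$ now contains both a suitable $u_i$ and a suitable twist, so by Theorem~\ref{thmnon1} we conclude $G = \Mod(N_g)$.

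For the small genera not covered by the "sufficiently large $g$" bookkeeping I would treat $g = 3, 5, 6$ (and note $g = 4$ is explicitly excluded) by hand: $g = 3$ is already done in the excerpt via $s = t_{a_1}t_{b_1}$, $s' = t_{a_1}^2 t_{b_1}$, $u_2$, and $\Mod(N_5)$, $\Mod(N_6)$ should follow from the same strategy with the finitely many relations checked explicitly, possibly using $s'$ or $t'$ instead of $s$ or $t$ when a parity or order constraint forces it (this is presumably why both primed and unprimed versions are defined).

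The main obstacle I anticipate is the extraction step: showing that from $s t^{-1}$ (a product of crosscap transpositions whose square is a product of boundary twists) together with $y$ one really obtains an \emph{honest} Dehn twist about a two-sided curve in the distinguished family, rather than merely landing in a proper subgroup such as the twist subgroup $\mathcal{T}_g$ or some index-two overgroup. This requires carefully tracking the homological (mod-2 symplectic) image of each generator — using the description of the image of $\Mod(N_g) \to \operatorname{Aut}(H_1(N_g;\mathbb{Z}_2))$ recalled in Section~2 — to certify that $G$ surjects onto the full symplectic quotient, and separately tracking the crosscap-slide/$\mathbb{Z}_2$ factor, so that $G$ is not contained in $\mathcal{T}_g$. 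Verifying the curve-chasing identities (which curves $s t^{-1}$ and its conjugates actually fix or move, and the precise form of the resulting twist) is the bulk of the work and is where a careful picture, as in Figures~\ref{NG} and \ref{nongenerators1}, is indispensable.
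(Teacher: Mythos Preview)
There is a genuine gap in your choice of generators: the crosscap slide $y$ and the crosscap transpositions $u_i$ are \emph{not} torsion elements of $\Mod(N_g)$ once $g\geq 4$, since $u_i^2 = t_{\alpha_i}$ is a Dehn twist about an essential two-sided curve and hence has infinite order (and similarly $y^2$ is a nontrivial boundary twist). So $\{s,t,y\}$ is not a triple of torsion elements, and your plan to ``recover a crosscap slide from $y$'' assumes exactly the non-torsion ingredient you are not allowed to use.

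The paper avoids this by never taking $y$ or any $u_i$ as a generator; instead it uses \emph{conjugates of the rotations}, which remain torsion. For $g\geq 6$ the three elements are $s$, $s'$, and $xtx^{-1}$, where $x$ is an explicit (non-torsion) word built so that $xt^2x^{-1}$ sends $a_2$ to $c_1$; for $g=5$ they are $s$, $st_{a_2}$ (of order~$6$), and $y^{-1}t'y$. The ``extraction step'' you flag as the main obstacle never arises: since $s' = t_{a_1}s$, one has $t_{a_1} = s's^{-1}\in G$ for free, and conjugation by powers of $s$ then yields all of $t_{b_i}$, $t_{c_i}$. The remaining twist is obtained as $t_{a_2} = t_{c_1}^{(xt^2x^{-1})^{-1}}$, and the crosscap-slide ingredient is supplied by $xtx^{-1}$ itself, since $t = u_1\cdots u_{g-1}$ lies outside $\mathcal{T}_g$. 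No support analysis of $st^{-1}$ and no mod-$2$ symplectic bookkeeping is needed.
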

\begin{proof}
 Suppose first that $g=5$. We will show that $\Mod(N_5)$ is generated by the torsion elements $s$, $st_{a_2}$ (which is of order $6$) and $y^{-1} t^{\prime} y$. Let $G=\langle s, st_{a_2}, y^{-1} t^{\prime} y\rangle$. Observe that $t_{a_2}$ is clearly contained in $G$. One can verify that
\begin{itemize}
    \item $t_{b_2}=t_{a_2}^{(st_{a_2})^{-1}}\in G.$
\end{itemize}
Conjugating by $s$ shows that the subgroup $G$ contains the Dehn twists $t_{a_1}$, $t_{b_1}$ and $t_{c_1}$. For $g\geq 6$, we define the element
$$x= 
    \begin{cases}
y^{-1}t_{b_1}t_{c_1}t_{b_2}t_{a_2} & \text{if } g=6, \\ 
t_{b_r}u_{g-2}t_{b_1}t_{c_1}t_{b_2}t_{a_2}& \text{if } g=2r+1\geq 7,\\
t_{c_r}u_{g-2}t_{b_1}t_{c_1}t_{b_2}t_{a_2}& \text{if } g=2r+2\geq 8.
    \end{cases}$$
One can check that $x(b_2)=a_2$ and $x(b_1)=c_1$. Suppose that $g=2r+2\geq 6$. Let $G=\langle s, s^{\prime}, xtx^{-1}  \rangle$. It is easy to see that the Dehn twist $t_{a_1}=s^{\prime}s^{-1}$ is in $G$. By the action of $s$, the Dehn twists $t_{a_1}$, $t_{b_i}$ and $t_{c_i}$ are contained in $G$. To show that $t_{a_2}\in G$, consider the diffeomorphism $xt^2x^{-1}$ satisfying
\begin{itemize}
    \item $t_{a_2}=t_{c_1}^{(xt^2x)^{-1}}\in G$.
\end{itemize}
If $g=2r+1\geq 7$, it can be shown that $\Mod(N_g)$ is generated by the torsion elements $s$, $s^{\prime}$ and $xtx^{-1}$ following the same argument as above.
\end{proof} 
The next result says that for a sufficiently large $g$, the mapping class group  $\Mod(N_g)$ is generated by three conjugate torsion elements. Leśniak and Szepietowski follow Lanier's techniques~\cite{lanier2018} for the proof (see~\cite{lesniak-szepietowski} for complete details).
\begin{theorem}\label{lesniak-szepietowski-non}
Let $k\geq 12$ be even and $p$ be odd. For a nonorientable surface of genus $g=pk+2q(k-1)$ or $g=pk+2q(k-1)+1$ with $q\in \mathbb Z_{> 0}$, $\Mod(N_g)$ is generated by three conjugate elements of order $k$.
\end{theorem}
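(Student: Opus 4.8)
The plan is to follow Lanier's strategy \cite{lanier2018} for generating a mapping class group by conjugate torsion elements, transplanted to the nonorientable setting by taking the order-$k$ element to be a rotation of $N_g$ closely related to the periodic maps $t$ and $t'$ above. First I would build this rotation. The arithmetic conditions are designed precisely so that $g=pk+2q(k-1)$ (respectively $g=pk+2q(k-1)+1$) is the genus of a nonorientable surface admitting a homeomorphism $\rho$ of order $k$ of a controlled combinatorial type: present $N_g$ around a $\mathbb{Z}/k$-symmetry axis so that $\rho$ permutes $p$ free orbits' worth of crosscaps (contributing $pk$ to the genus) together with $q$ handle-type blocks, each attached $\mathbb{Z}/k$-equivariantly and each built from a chain of $k-1$ curves so as to contribute $2(k-1)$, with one extra $\rho$-fixed crosscap exactly in the odd-genus (``$+1$'') case. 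This $\rho$ can be taken conjugate to the rotation of \cite[Figure 4]{lesniak-szepietowski}, so its powers realize elements of the type $t,t'$; in particular $\rho$ transposes crosscaps, which matters because $\Mod(N_g)$ properly contains the twist subgroup. The three generators will then be $\rho$, $\rho^{f_1}$, $\rho^{f_2}$, where $f_1,f_2$ are Dehn twists (or short products of Dehn twists) about curves $c_1,c_2$ chosen transverse to the symmetry; these are manifestly pairwise conjugate and of order $k$.

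Next I would extract ``differences of Dehn twists'' from the group $G=\langle \rho,\rho^{f_1},\rho^{f_2}\rangle$. Using the conjugation relation $f t_\alpha f^{-1}=t_{f(\alpha)}^{\pm1}$, the element $\rho^{-1}\rho^{f_i}$ equals $t_{\rho^{-1}(c_i)}^{\pm1}t_{c_i}^{\mp1}$ and hence lies in $G$; conjugating it by the powers of $\rho$ (which also lie in $G$) makes it telescope along the whole $\rho$-orbit of $c_i$, so that $G$ contains $t_{\rho^{a}(c_i)}^{\pm1}t_{\rho^{b}(c_i)}^{\mp1}$ for all $a,b$ and both $i$. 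Choosing $c_1$ and $c_2$ so that their $\rho$-orbits together fill $N_g$ and also intersect one another is exactly what makes two conjugates insufficient while three suffice.

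Then comes the bootstrap to honest generators, carried out as in the proof of Theorem~\ref{theoremnon3}. The chain relation expresses a suitable product of the twists $t_{\rho^a(c_i)}$ along an orbit as a power of a boundary twist (trivial, or otherwise identified), and combining that identity with the telescoping differences already in $G$ lets one solve for a single Dehn twist; the lantern relation, exactly as in \cite{altunoz-pamuk-yildiz}, then converts the remaining differences into single twists about the curves $a_1,a_2,b_i,c_i$ of Figure~\ref{NG}; and the crosscap-slide relation $Y_{\mu_1,\alpha}^{-1}Y_{\mu_2,\alpha}=t_\alpha^2$, together with the fact that $\rho$ transposes crosscaps, produces a crosscap transposition. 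Once $G$ is shown to contain all the Dehn twists in the generating set of Theorem~\ref{thmnon1} and a crosscap slide, Theorem~\ref{thmnon1} yields $G=\Mod(N_g)$.

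The hard part will be the interplay of the first and last steps. One must design $\rho$ so that it genuinely has order $k$ on every admissible $N_g$ — this is where $k\ge 12$ even and $p$ odd enter, through the orbifold Euler-characteristic and crosscap bookkeeping and through the need for $\rho$-orbits long enough to run the chain relation — and then check that the chosen orbits are generic enough that the chain and lantern relations actually produce single twists, and that a genuine crosscap slide (not merely a product of twists) is obtained. Verifying that the full standard generating set is reached, rather than only a proper subgroup, is the real content of the argument, and the detailed verification is the one given in \cite{lesniak-szepietowski}.
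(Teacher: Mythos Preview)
Your proposal is consistent with the paper's treatment: the paper does not give its own proof of this theorem but simply records that Le\'sniak and Szepietowski follow Lanier's techniques~\cite{lanier2018} and refers the reader to~\cite{lesniak-szepietowski} for complete details. Your outline --- building an order-$k$ rotation adapted to the decomposition $g=pk+2q(k-1)(+1)$, taking three conjugates by Dehn twists, extracting twist-differences via $\rho^{-1}\rho^{f_i}$ and telescoping along $\rho$-orbits, and then bootstrapping to the full generating set --- is exactly the Lanier-style strategy the paper alludes to, so there is nothing to compare against beyond that one-line reference.
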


\subsection{Involution generators of $\Mod(N_g)$ }

This section focuses especially on generating sets\index{generating set!involution} consisting entirely of involutions, as these elements provide both algebraic simplicity and intuitive geometric interpretation, most naturally interpreted as reflection-like transformations. Comparing with the orientable case, the nonorientable case involves algebraic and geometric challenges due to the presence of crosscaps. It is well known that any group generated by two involutions is isomorphic to a quotient of a dihedral group\index{dihedral group}. However, the mapping class group $\Mod(N_g)$ contains non-abelian free subgroups, which cannot occur in dihedral group quotients. This structural obstruction proves that $\Mod(N_g)$ cannot be generated by two involutions, implying three as the strict lower bound for involution generators. Szepietowski~\cite{szepietowski} proved that $\Mod(N_g)$ can be generated by involutions using the generators given in~\cite{birman-chillingworth}. He later proved that $\Mod(N_g)$ can be generated by three elements and also four involutions~\cite{szepietowski1}. In~\cite{altunoz-pamuk-yildiz}, his result was improved by proving that for $g\geq 26$, $\Mod(N_g)$ can be generated by three involutions. For $g=3$, it is known that $\Mod(N_3)$ admits a known presentation with generators $a,b$ and a crosscap slide $y$ subject to the relations $aba=bab$, $(aba)^4=1$, $yay=a^{-1}$, $yby=b^{-1}$ and $y^2=1$~\cite{birman-chillingworth}. Using this presentation, it follows directly that $\Mod(N_3)$ can be generated by the three involutions $y$, $ay$, and $by$~\cite{korkmaz2012}. Below, we give a proof that $\Mod(N_g)$ can be generated by three involutions if $g\geq 26$. For any elements $x,y \in G$ with  $\rho x \rho = y$, where $\rho$ is an involution, the product $\rho x y^{-1}$ is an involution.
\begin{figure}[h]
\begin{center}
\scalebox{0.28}{\includegraphics{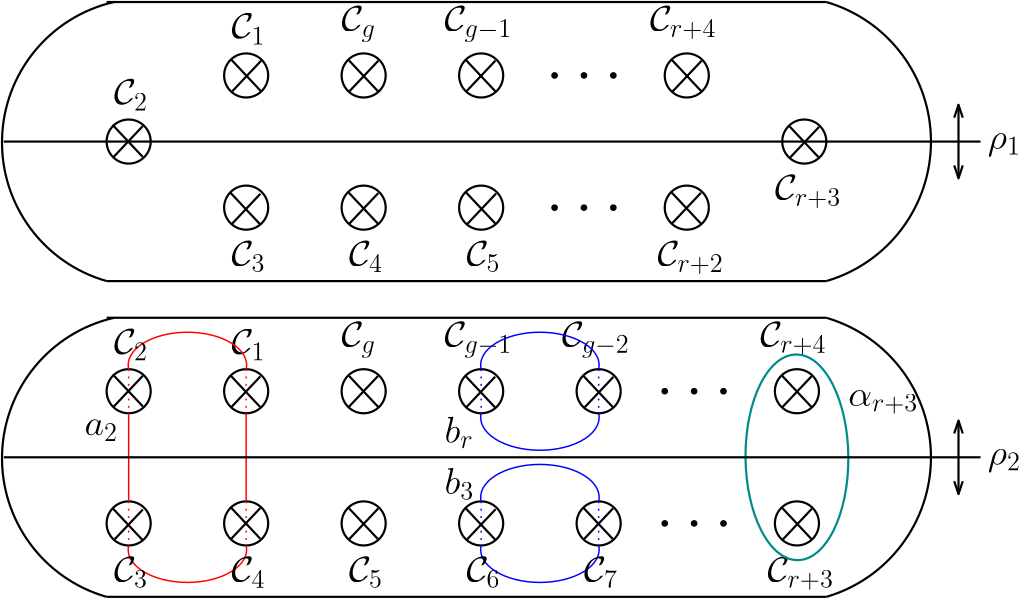}}
\caption{The reflections $\rho_1,\rho_2$ for $g=2r+2$.}
\label{nonrhoeven}
\end{center}
\end{figure}
\begin{figure}[h]
\begin{center}
\scalebox{0.13}{\includegraphics{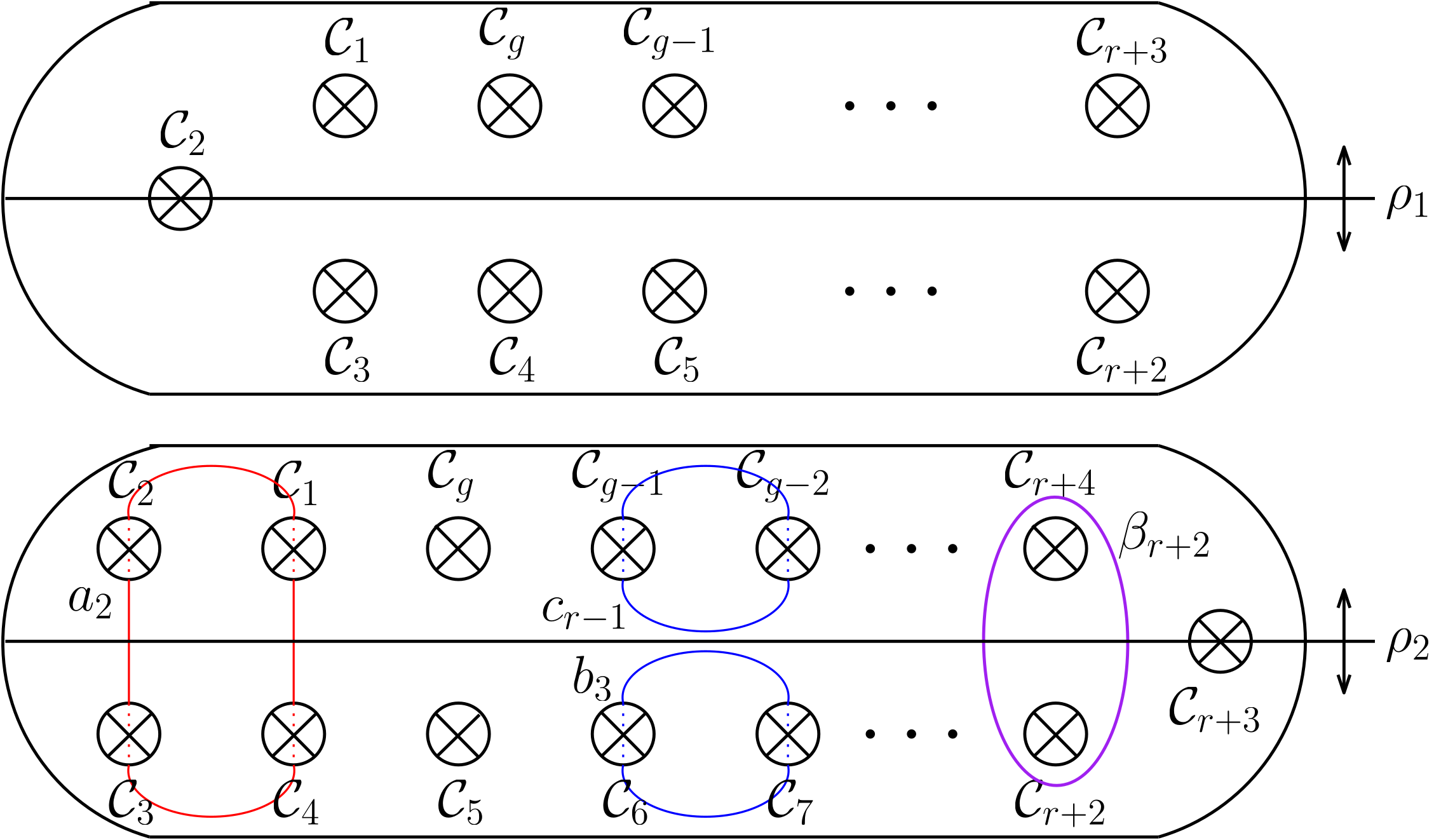}}
\caption{The reflections $\rho_1,\rho_2$ for $g=2r+1$.}
\label{nonrhoodd}
\end{center}
\end{figure}
For $g=2r+2$, we consider the two reflection generators $\rho_1$ and $\rho_2$ depicted in Figure~\ref{nonrhoeven}. These elements correspond to reflections across the planes shown in Figure~\ref{nonrhoeven}. The rotation $T$ shown in Figure~\ref{rotationT} is then expressed as the composition $\rho_2\rho_1$. Recall that $u_i$ is the crosscap transposition supported on the one-holed Klein bottle whose boundary curve is $\alpha_i$ shown in Figure~\ref{nongenerators1}. The rotation $T$ maps $\alpha_i$ to $\alpha_{i+1}$, which implies that $Tu_iT^{-1} = u_{i+1}$. Observe that the reflection $\rho_2$ preserves $a_2$ and $\alpha_{r+3}$, and maps $b_r$ to $b_3$ while reversing local orientations. This implies the conjugation relations
\[
\rho_2t_{a_2}\rho_2=t_{a_2}^{-1}, \quad \rho_2t_{b_2}\rho_2=t_{b_3}^{-1} \textrm{  and  } \rho_2 u_{r+3}\rho_2=u_{r+3}^{-1}.
\]
Combining these results, we get that $\rho_2t_{a_2}t_{b_r}t_{b_3}u_{r+3}$ is an involution. 

\begin{theorem}\label{theoremnon5} \cite{altunoz-pamuk-yildiz}
For a nonorientable surface $N_g$ of genus $g=2r+2\geq26$, the mapping class group $\Mod(N_g)$ is generated by the involutions $\rho_1$, $\rho_2$ and $\rho_2t_{a_2}t_{b_r}t_{b_3}u_{r+3}$.
\end{theorem}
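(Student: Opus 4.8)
The plan is to reduce the statement to the four-element generating set of Theorem~\ref{theoremnon3}. Write $\sigma_3 := \rho_2t_{a_2}t_{b_r}t_{b_3}u_{r+3}$ for the third involution, and let $G=\langle\rho_1,\rho_2,\sigma_3\rangle$. Two elements come essentially for free: since $T=\rho_2\rho_1$, the rotation $T$ lies in $G$, so $G$ is stable under conjugation by every power of $T$; and since $\rho_2^2=\id$, the product $\rho_2\sigma_3=t_{a_2}t_{b_r}t_{b_3}u_{r+3}=:W$ lies in $G$ as well. Hence it suffices to prove that $\langle T,W\rangle$ contains $t_{a_1}t_{a_2}^{-1}$, $t_{b_1}t_{b_2}^{-1}$ and $u_{g-1}$; Theorem~\ref{theoremnon3} then yields $G=\Mod(N_g)$. (Since $g=2r+2\ge 26>19$ one could instead aim for the two-element set $\{T,\,u_{g-1}t_{\gamma_{10}}t_{c_2}^{-1}\}$ of Theorem~\ref{theoremnon4}, but targeting Theorem~\ref{theoremnon3} keeps the bookkeeping more uniform.)

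To extract these elements I would run the same kind of bootstrap used in the proof of Theorem~\ref{theoremnon4}. The curves $a_2$, $b_r$, $b_3$ and the one-holed Klein bottle $K_{r+3}$ bounded by $\alpha_{r+3}$ that support the four factors of $W$ are pairwise disjoint (visible in Figure~\ref{nonrhoeven} once $r\ge 12$), so $W$ is a product of four commuting mapping classes. Because $T$ is a composition of two reflections it preserves the local orientation along the relevant two-sided curves, so $T$-conjugation is sign-free and $W^{T^k}=t_{T^k(a_2)}t_{T^k(b_r)}t_{T^k(b_3)}u_{T^k(\alpha_{r+3})}$; as $T$ cyclically permutes the crosscaps, these images sweep out the standard chains $a_1\mapsto b_1\mapsto c_1\mapsto b_2\mapsto\cdots\mapsto b_r\mapsto c_r\mapsto\cdots$ and $a_2=\gamma_1\mapsto\gamma_2\mapsto\cdots$ already used in the proof of Theorem~\ref{theoremnon4}. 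Choosing $k$ so that two conjugates $W^{T^i}$ and $W^{T^j}$ share all but a small part of their support, the product $W^{T^i}(W^{T^j})^{-1}$ collapses, by cancellation of the common twists and transpositions, to a short word of the shape $t_\gamma t_\delta^{-1}$ or $u_\ell^{\pm1}t_\gamma t_\delta^{-1}$ with $\gamma,\delta$ neighbours in one of the chains. Iterating these collapses along the two chains produces all the needed neighbouring twist-ratios, in particular $t_{a_1}t_{a_2}^{-1}$ and $t_{b_1}t_{b_2}^{-1}$; combining an element of the form $u_\ell\,t_\gamma t_\delta^{-1}$ with a separately obtained copy of the twist-ratio $t_\gamma t_\delta^{-1}$ and using the identity $(u_\ell\,t_\gamma t_\delta^{-1})(t_\gamma t_\delta^{-1})^{-1}=u_\ell$ frees a single crosscap transposition $u_\ell\in G$, whence $u_{g-1}=u_\ell^{T^{\,g-1-\ell}}\in G$ since $Tu_iT^{-1}=u_{i+1}$.

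The main obstacle is the geometric and word-combinatorial bookkeeping in the middle step: one must pin down exactly which powers $T^k$ make the supports of the relevant conjugates of $W$ overlap in precisely the intended curves (so that the telescoping really does produce a pure twist-ratio or a ``transposition times twist-ratio'' rather than an unmanageable six-factor word), and organise the cancellations so that the stubborn four-factor element $W$ — whose support straddles both ``ends'' of $N_g$ through the $b_r$ and $b_3$ slots paired up by $\rho_2$ — actually yields $t_{a_1}t_{a_2}^{-1}$, $t_{b_1}t_{b_2}^{-1}$ and a lone $u_\ell$. This is precisely where the hypothesis $g=2r+2\ge 26$, i.e. $r\ge 12$, enters: it guarantees enough crosscaps that all the $T$-translates appearing in the argument are mutually disjoint except where we want them to meet, so that no premature collisions spoil the cancellations. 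Once the three generators are in hand, Theorem~\ref{theoremnon3} closes the argument.
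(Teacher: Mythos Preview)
Your proposal is correct and follows essentially the same approach as the paper: extract $T=\rho_2\rho_1$ and $W=G_1=t_{a_2}t_{b_r}t_{b_3}u_{r+3}$ from the three involutions, then run the Theorem~\ref{theoremnon4}-style bootstrap of conjugating by powers of $T$ and cancelling to obtain twist-ratios (the paper singles out $t_{c_2}t_{b_3}^{-1}$ first) and eventually $t_{a_1}t_{a_2}^{-1}$, $t_{b_1}t_{b_2}^{-1}$ and $u_{g-1}$, after which Theorem~\ref{theoremnon3} finishes the job. The paper's account is likewise only a sketch, deferring the explicit cancellation bookkeeping to \cite{altunoz-pamuk-yildiz}.
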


\begin{proof}
The proof relies on showing that the subgroup $G$ generated by the three given involutions contains a known generating set, such as the one from Theorem~\ref{theoremnon3}. Let $G_1 = t_{a_2}t_{b_r}t_{b_3} u_{r+3}$, so the generators of $G$ are $\rho_1$, $\rho_2$, and the involution $\rho_2 G_1$. The rotation $T = \rho_2\rho_1$ is therefore in $G$, as is the element $G_1$ itself. The argument, similar in technique to the proof of Theorem~\ref{theoremnon4}, proceeds by constructing specific products and conjugates of the generators to isolate individual Dehn twists and crosscap transpositions. For instance, by finding appropriate products, one can show that $t_{c_2}t_{b_3}^{-1} \in G$. Conjugating this element by powers of the rotation $T$ is a key step that yields all elements of the form $t_{b_i}t_{c_i}^{-1}$ and $t_{c_i}t_{b_{i+1}}^{-1}$, which in turn generates the chain of twists $t_{b_i}t_{b_{i+1}}^{-1}$. With further algebraic manipulation, one can recover the remaining required generators, $t_{a_1}t_{a_2}^{-1}$ and the crosscap transposition $u_{g-1}$. Since all generators from Theorem~\ref{theoremnon3} can be constructed, we conclude that $G = \mathrm{Mod}(N_g)$. For a detailed construction of the intermediate elements, we refer the reader to \cite{altunoz-pamuk-yildiz}.
\end{proof}

For $g=2r+1$, we use the reflections $\rho_1$ and $\rho_2$ depicted in Figure~\ref{nonrhoodd} so that $T=\rho_2 \rho_1$. In the following theorem, the element $v_i$ represents the crosscap transposition supported on the one-holed Klein bottle bounded by the curve $\beta_i$ shown in Figure~\ref{nongenerators1}. Since the rotation $T$ satisfies $T(\beta_i)=\beta_{i+1}$, we get the conjugation relation $Tv_iT^{-1}=v_{i+1}$. Since
\[
\rho_2(a_2)=a_2 \textrm{ and }    \rho_2(c_{r-1})=b_3,
\]
and since the reflection $\rho_2$ reverses the local orientations of the two-sided simple closed curves $a_2$, $c_{r-1}$ and $\beta_{r+2}$, we have
\[
\rho_2t_{a_2}\rho_2=t_{a_2}^{-1}, \quad \rho_2t_{c_{r-1}}\rho_2=t_{b_3}^{-1} \textrm{ and } \rho_2v_{r+2}\rho_2=v_{r+2}^{-1}.
\]
A straightforward computation shows that the element $\rho_2t_{a_2}t_{c_{r-1}}t_{b_3}v_{r+2}$ is an involution.
\begin{theorem}\label{theoremnon6}
For a nonorientable surface $N_g$ of genus $g=2r+1\geq27$, the mapping class group $\Mod(N_g)$ is generated by the involutions $\rho_1$, $\rho_2$ and $\rho_2t_{a_2}t_{c_{r-1}}t_{b_3}v_{r+2}$.
\end{theorem}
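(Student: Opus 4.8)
The plan is to follow the same strategy as in the proof of Theorem~\ref{theoremnon5}, adapted to the odd-genus model of Figures~\ref{nonrhoodd} and~\ref{nongenerators1}. Write $G_1 = t_{a_2}t_{c_{r-1}}t_{b_3}v_{r+2}$, so that the three given involutions are $\rho_1$, $\rho_2$ and $\rho_2 G_1$. Then the subgroup $G$ they generate automatically contains the rotation $T = \rho_2\rho_1$ and the element $G_1 = \rho_2\cdot(\rho_2 G_1)$. Since we already know from Theorem~\ref{theoremnon3} that $\Mod(N_g)$ is generated by $T$, $t_{a_1}t_{a_2}^{-1}$, $t_{b_1}t_{b_2}^{-1}$ and $u_{g-1}$, the goal reduces to producing these four elements (or the conjugate family involving the curves $\beta_i$ and the transpositions $v_i$) inside $G$.

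First I would isolate a single ``two-colour'' relation: forming a product of $G_1$ with one of its $T$-conjugates $G_1^{T^k}$ for a carefully chosen shift $k$, I would arrange the $a_2$-twist and the crosscap transposition $v_{r+2}$ to cancel, leaving a product of two Dehn twists about disjoint two-sided curves with opposite exponents, e.g. an element of the form $t_{c_2}t_{b_3}^{-1}\in G$, exactly analogous to the corresponding step in Theorem~\ref{theoremnon5}. Conjugating this element by powers of $T$, which cyclically shifts the indices of the curves $b_i$, $c_i$, then yields all elements $t_{b_i}t_{c_i}^{-1}$ and $t_{c_i}t_{b_{i+1}}^{-1}$ in the available range, and multiplying consecutive ones telescopes to the chain relations $t_{b_i}t_{b_{i+1}}^{-1}$; in particular $t_{b_1}t_{b_2}^{-1}\in G$. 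Next, using $G_1$ together with these two-colour elements I would cancel the $t_{c_{r-1}}$, $t_{b_3}$ and $v_{r+2}$ factors to expose a relation involving $t_{a_2}$, and combine it with the $T$-conjugates that carry $a_2$ through the $\gamma$-type curves down to $a_1$ (as in the proof of Theorem~\ref{theoremnon4}) to obtain $t_{a_1}t_{a_2}^{-1}\in G$. Finally, once all the two-colour twists are available, rewriting $G_1$ lets me peel off the transposition factor $v_{r+2}$, and a suitable $T$-conjugate of it gives $v_{g-1}$, hence $u_{g-1}$ up to the change-of-curve conjugation already in $G$. At that point $G$ contains a full generating set from Theorem~\ref{theoremnon3}, so $G = \Mod(N_g)$.

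The main obstacle will be the index bookkeeping rather than any conceptual difficulty. When $g = 2r+1$ the curve $c_r$ is absent, so the cyclic chain of curves permuted by $T$ is one shorter than in the even case, and the ``wrap-around'' behaviour of $T^k$ on the indices differs; one must verify that every shift $k$ used above keeps the relevant curves disjoint (so that $f t_\alpha f^{-1} = t_{f(\alpha)}^{\pm 1}$ applies and the telescoping products are legitimate) and that distinct curves do not collide under the cyclic action. Checking these disjointness and non-collision conditions for all the intermediate elements is precisely what pins down the lower bound $g \geq 27$, one more than in Theorem~\ref{theoremnon5}. The remaining verifications — that each explicit product of $\rho_1$, $\rho_2$ and $\rho_2 G_1$ acts on the relevant triples of curves as claimed — are routine computations with Figures~\ref{nonrhoodd} and~\ref{nongenerators1}, carried out as in \cite{altunoz-pamuk-yildiz}.
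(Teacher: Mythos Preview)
Your proposal is correct and takes essentially the same approach as the paper: the paper explicitly omits the proof, stating only that it ``follows similarly to Theorem~\ref{theoremnon5}'' and referring to \cite[Theorem~4.2]{altunoz-pamuk-yildiz} for details, and your outline is precisely that adaptation---recover $T$ and $G_1$ from the three involutions, manufacture two-colour twist relations via $T$-conjugation and cancellation, telescope to obtain $t_{b_1}t_{b_2}^{-1}$ and $t_{a_1}t_{a_2}^{-1}$, then peel off the crosscap transposition to invoke Theorem~\ref{theoremnon3}. Your remarks on the odd-genus index bookkeeping (absence of $c_r$, the $\beta_i$/$v_i$ versus $\alpha_i$/$u_i$ distinction, and the disjointness checks forcing $g\geq 27$) correctly identify where the details diverge from the even case.
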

The proof of Theorem~\ref{theoremnon6} follows similarly to Theorem~\ref{theoremnon5}, so we omit it here to avoid repetition (for details, see the proof of ~\cite[Theorem 4.2]{altunoz-pamuk-yildiz}).
\section{Generating sets for $\Mod(N_{g,p}$)}
Let $N_{g,p}$ denote a nonorientable surface\index{surface!punctured} of genus $g$ with $p$ punctures $P=\{z_1,z_2,\ldots,z_p\}$, which is obtained from a closed nonorientable surface $N_g$ by deleting $p$ points. When $p=0$, we simply write $N_{g}$. We present $N_{g,p}$ with $p$ punctures and $g$ crosscaps in Figure~\ref{nonpunctured_curves}. The \textit{mapping class group} $\Mod(N_{g,p})$ of the surface $N_{g,p}$ is defined to be the group of isotopy classes of all diffeomorphisms $N_{g,p} \to N_{g,p}$ preserving the set $P$. A fundamental question about this group is to find a finite generating set. In the orientable case, classical constructions provide explicit generating sets, typically composed of Dehn twists and half-twists\index{half-twist}. However, the nonorientable case presents additional complexities due to the lack of global orientation and the existence of new types of diffeomorphisms, such as crosscap slides and puncture slides (to be defined later). Szepietowski~\cite[Theorem~3]{szepietowski} showed that $\Mod(N_{g, p})$ is generated by involutions. Later, Yoshihara~\cite{yoshihara2022} gave finite generating sets for $\Mod(N_{g,p})$ with  $p \geq 0$. He proved that it can be generated by eight involutions if $g \geq 13$ is odd and eleven involutions if $g \geq 14$ is even. Furthermore, for $p \geq 1$ and $g \geq 14$, $\Mod(N_{g,p})$ admits smaller generating sets consisting of five elements or six involutions. These bounds were recently improved by the authors~\cite{altunoz-pamuk-yildiz5}. In this section, we present these generating sets.

Let $\Mod_0(N_{g,p})$ denote the subgroup of the mapping class group $\Mod(N_{g,p})$ consisting of elements that fix the set $P$ pointwise. This subgroup is a normal subgroup of $\Mod(N_{g,p})$ and has index $p!$. There is an exact sequence:
 \[
1\longrightarrow \Mod_{0}(N_{g,p})\longrightarrow \Mod(N_{g,p}) \longrightarrow Sym_{p}\longrightarrow 1,
\]
where $Sym_p$ is the symmetric group on the set $\lbrace1,2,\ldots,p\rbrace$ and the last projection in the sequence is given by the restriction of the isotopy class of a diffeomorphism to its action on the punctures. Equivalently, since $\Mod(N_{g,p})$ acts on the set of punctures, $\Mod_0(N_{g,p})$ is precisely the kernel of this action. It consists of all mapping classes that fix each puncture.

In the mapping class group $\Mod(N_{g,p})$, there exists a different type of diffeomorphism called a \textit{puncture slide}\index{puncture slide}. Consider a Möbius band $M$ with a puncture and a one-sided simple closed curve $\alpha$ (as depicted in Figure~\ref{punctureslide}). The puncture slide along $\alpha$, denoted by $v_{\alpha}$, is obtained by sliding the puncture once around the curve $\alpha$, while keeping the boundary points of $M$ pointwise fixed. The action of the puncture slide $v_{\alpha}$ on the interval $c$ is depicted in Figure~\ref{punctureslide}.
\begin{figure}[h]
\begin{center}
\scalebox{0.35}{\includegraphics{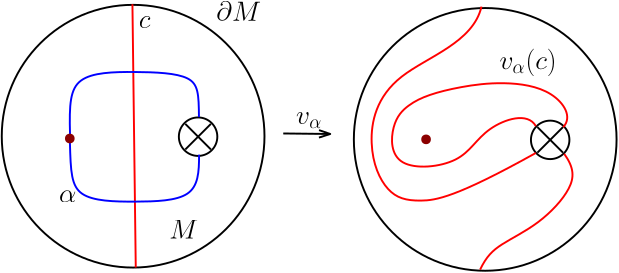}}
\caption{The puncture slide $v_{\alpha}$.}
\label{punctureslide}
\end{center}
\end{figure}
 We give Korkmaz's generating set for $\Mod_{0}(N_{g,p})$~\cite{korkmaz2002}. Consider the collection $\mathcal{C}$ of simple closed curves defined as follows:
\begin{itemize}
    \item For odd genus $g=2r+1$:
    \begin{flalign*}
    \mathcal{C} &= \{a_i, b_i, f_i \mid i=1,\ldots,r\} \cup \{c_j \mid j=1,\ldots,r-1\} \cup \{e_k \mid k=1,\ldots,p-1\}&
    \end{flalign*}

    \item For even genus $g=2r+2$:
    \begin{flalign*}
    \mathcal{C} &= \{a_i, b_i, f_i, c_i \mid i=1,\ldots,r\} \cup \{e_k \mid k=1,\ldots,p-1\},&
    \end{flalign*}
\end{itemize}

where the curves are shown in Figure~\ref{generators_non_punctured}.
\begin{remark}
    There exists a homeomorphism of $N_g$ that maps the curves shown in Figure~\ref{NG} to the correspondingly labeled curves in Figure~\ref{generators_non_punctured}. For a detailed description of this homeomorphism, see~\cite{chillingworth}.
\end{remark}

\begin{theorem}[{\cite{korkmaz2002}}]\label{puncture_non_thm}
For $g \geq 3$, the mapping class group $\Mod_{0}(N_{g,p})$ is generated by
\begin{itemize}
\item[(i)] Dehn twists $\{t_c\}_{c\in\mathcal{C}}$, the crosscap slide $y$, and puncture slides $\{v_{g,i}\}_{i=1}^p$ when $g$ is odd,
\item[(ii)]  all generators from case (i) together with the additional puncture slides $\{v_{g-1,i}\}_{i=1}^p$ when $g$ is even.
\end{itemize}
\end{theorem}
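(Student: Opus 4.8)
The plan is to reduce the theorem to known generating sets for the closed surface $N_g$ by carefully controlling the kernel of the puncture-forgetting map. Forgetting all $p$ punctures gives a Birman-type exact sequence
\[
1 \longrightarrow \pi \longrightarrow \Mod_0(N_{g,p}) \longrightarrow \Mod(N_g) \longrightarrow 1,
\]
where $\pi$ is generated by ``point-pushing'' maps of the punctures around loops in $N_g$. So the strategy splits into two tasks: (a) show that the Dehn twists $\{t_c\}_{c\in\mathcal{C}}$, the crosscap slide $y$, and the puncture slides project onto a generating set for $\Mod(N_g)$ — here one invokes the known generating sets (e.g.\ Theorem~\ref{thmnon1}, or Chillingworth's set), together with the remark identifying the curves of Figure~\ref{NG} with the correspondingly labeled curves of Figure~\ref{generators_non_punctured}; and (b) show that the listed generators, together with elements already produced in step (a), generate the point-pushing subgroup $\pi$. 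For (b) the natural inductive device is to forget the punctures one at a time, using the sequence
\[
1 \longrightarrow \pi_1(N_{g,i-1}) \longrightarrow \Mod_0(N_{g,i}) \longrightarrow \Mod_0(N_{g,i-1}) \longrightarrow 1,
\]
and exhibit point-pushes along a generating set of loops of $\pi_1$ as products of Dehn twists about the curves $e_k$ (the ``chain'' curves joining consecutive punctures), twists about the $a_i,b_i,c_i,f_i$, the crosscap slide, and the puncture slides $v_{g,i}$ (and $v_{g-1,i}$ when $g$ is even).

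The key geometric input is the standard decomposition of a point-push. Pushing a puncture $z$ around a two-sided simple loop $\delta$ equals $t_{\delta_1}t_{\delta_2}^{-1}$, where $\delta_1,\delta_2$ are the two boundary curves of an annular neighborhood of $\delta$ containing $z$; pushing $z$ around a one-sided loop $\mu$ is, up to a twist, a puncture slide $v_\mu$ (this is exactly why puncture slides must be included in the generating set, and why for even $g$ one needs the extra family tied to the $(g-1)$st crosscap). So the plan for (b) is: first choose a convenient free basis for $\pi_1(N_g)$ adapted to Figure~\ref{generators_non_punctured} — loops that are either two-sided and bound (with the puncture) an annulus whose sides are among the curves in $\mathcal{C}$, or are one-sided loops through a crosscap realized by a $v_{g,i}$; then express the push of each basis loop in the stated generators; finally conjugate these pushes around by mapping classes already shown to lie in the group to move one puncture's pushes to the others, and use the $e_k$-twists to interrelate pushes of different punctures (since $t_{e_k}t_{e_{k}'}^{-1}$-type products realize pushing one puncture past a neighboring one).

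I would carry this out in the order: (1) set up both exact sequences and fix notation for the curves and loops from Figure~\ref{generators_non_punctured}; (2) verify the projection to $\Mod(N_g)$ is onto a known generating set, quoting Theorem~\ref{thmnon1} and the curve-identification remark; (3) prove the point-push decomposition lemmas (two-sided loop $\to$ difference of twists, one-sided loop $\to$ puncture slide times twist); (4) run the single-puncture induction, handling $p=1$ as the base case and then adding punctures one at a time, at each stage producing all new point-pushes from the $e_k$-twists and the previously obtained generators; (5) assemble (a)+(b) to conclude. Throughout, the even-genus case requires the parallel bookkeeping with $v_{g-1,i}$, which is why it appears as a separate clause in the statement.

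The main obstacle I expect is step (4): controlling the point-pushing subgroup precisely. It is easy to write down \emph{some} point-pushes as products of the given generators, but showing that one gets \emph{all} of $\pi_1(N_{g,p-1})$ — i.e.\ that the chosen loops whose pushes are manifestly in the group actually form a generating set for the surface group, and that no relations are lost when passing through the Birman exact sequence — is the delicate part. The nonorientability complicates the loop combinatorics: a basis loop can be one-sided, its pushes interact with crosscap slides and puncture slides in ways with no orientable analogue, and one must be careful that conjugating a puncture slide by an available mapping class lands back among the listed generators rather than an uncontrolled puncture slide. Managing this bookkeeping — especially the interplay between the $e_k$ chain curves, the crosscap slide $y$, and the two families of puncture slides in even genus — is where the real work lies; the rest is assembling known ingredients.
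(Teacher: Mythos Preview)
The paper does not contain a proof of this theorem: it is quoted verbatim as a result of Korkmaz~\cite{korkmaz2002} and serves only as input for the later results on $\Mod(N_{g,p})$, so there is no ``paper's own proof'' to compare your proposal against.

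That said, your outline is the standard route and is essentially how the result is established in the original reference: one forgets punctures one at a time, uses the Birman exact sequence
\[
1 \longrightarrow \pi_1(N_{g,i-1}) \longrightarrow \Mod_0(N_{g,i}) \longrightarrow \Mod_0(N_{g,i-1}) \longrightarrow 1,
\]
reduces to a known generating set for the closed case, and then realizes point-pushes along a basis of $\pi_1$ as products of the listed twists and slides (two-sided loops giving differences of Dehn twists, one-sided loops giving puncture slides). Your identification of the bookkeeping in step~(4) as the delicate part is accurate; the even-genus distinction arises exactly because a basis of $\pi_1(N_g)$ in that case requires loops through two distinct crosscaps, forcing the second family $\{v_{g-1,i}\}$.
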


\begin{figure}[h]
\begin{center}
\scalebox{0.28}{\includegraphics{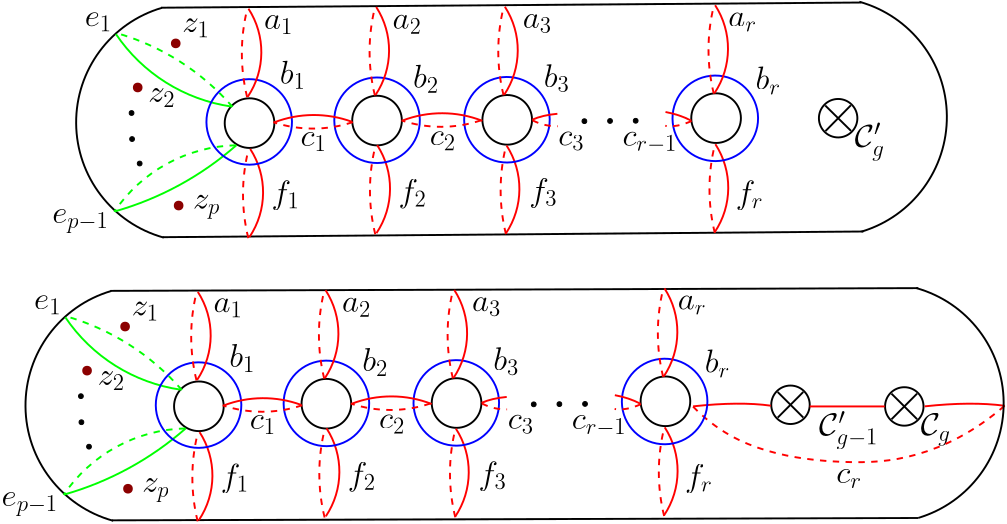}}
\caption{The Dehn twist generators for $\Mod_0(N_{g,p})$.}
\label{generators_non_punctured}
\end{center}
\end{figure}

Yoshihara uses different approaches for odd and even genus. For $g = 2r + 1 \geq 13$, he constructs involutions $\sigma, \tau, I, J, W$ and composite involutions: $\rho_1 = \tau t_{b_1}$, $\rho_2 = \tau v_{g,1}$ and $\rho_3=Wy$, where $y$ is a crosscap slide (see~\cite{yoshihara2022} for explicit definitions of these involutions and $y$). The proof has two key steps. The first is that the group generated by these eight involutions is contained in Korkmaz's generating set for $\Mod_{0}(N_{g,p})$, as given in Theorem~\ref{puncture_non_thm}. The second step is to prove surjectivity onto $Sym_{p}$, which follows from the action of the involutions $\sigma, \tau, W$. For $g = 2r + 2 \geq 14$, the proof requires additional elements: the involution $K$ and the composite involution elements $\rho_4 = Jt_{f_{r+1}}$ when $r=2k+1$ (when $r=2k$, interchange the curve $f_{r+1}$ so that it is preserved under $J$), $\rho_5 = Jt_{c_r}$ (for the curve $c_r$, see~\cite[Figure 19]{yoshihara2022}). Similar to the odd case, the subgroup of $\Mod(N_{g,p})$ generated by the eleven involutions $\sigma$, $\tau$, $I$, $J$, $W$, $K$, and $\rho_i$ ($i=1,2,3,4,5$) contains the generators for $\Mod_{0}(N_{g,p})$. The proof of surjectivity onto $Sym_{p}$ follows from the same elements used in the odd case.

Then we have the following theorem:
\begin{theorem}~\cite{yoshihara2022}
Let $p$ be a non-negative integer. Then, for $g$ odd with $g \geq 13$, the mapping class group $\Mod(N_{g,p})$ is generated by eight involutions. For $g$ even with $g \geq 14$, $\Mod(N_{g,p})$ is generated by eleven involutions.
\end{theorem}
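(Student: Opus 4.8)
The plan is to follow Yoshihara's strategy, which rests on two pillars already available to us: Korkmaz's generating set for the pure subgroup $\Mod_0(N_{g,p})$ from Theorem~\ref{puncture_non_thm}, and the exact sequence
\[
1\longrightarrow \Mod_0(N_{g,p}) \longrightarrow \Mod(N_{g,p}) \longrightarrow Sym_p \longrightarrow 1 .
\]
Consequently, a subgroup $G\le \Mod(N_{g,p})$ equals the whole group as soon as: (a) $G$ contains every generator in Korkmaz's list — the Dehn twists $\{t_c\}_{c\in\mathcal C}$, the crosscap slide $y$, and the puncture slides $v_{g,i}$ (together with $v_{g-1,i}$ when $g$ is even); and (b) $G$ surjects onto $Sym_p$ under the puncture-permutation map. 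So the whole argument reduces to exhibiting a small fixed family of involutions and checking (a) and (b).

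For odd $g=2r+1\ge 13$ I would fix the five reflection-type involutions $\sigma,\tau,I,J,W$ together with the composite elements $\rho_1=\tau t_{b_1}$, $\rho_2=\tau v_{g,1}$ and $\rho_3=Wy$. Each $\rho_j$ is arranged to be an involution by the elementary observation recorded in the text, namely that $\rho x y^{-1}$ is an involution whenever $\rho x\rho=y$ for an involution $\rho$; this forces one to choose the supporting loci of $\tau$ and $W$ so that $\tau$ reverses $b_1$ and the slide curve of $v_{g,1}$ while $W$ pairs correctly with $y$. The bulk of the work is step (a): beginning from $\sigma,\tau,I,J,W,\rho_1,\rho_2,\rho_3$, one forms products and conjugates to peel off the individual twists $t_{a_i},t_{b_i},t_{c_j},t_{f_i}$, the chain curves $e_k$ between punctures, the crosscap slide, and each puncture slide $v_{g,i}$. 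This is the same flavour of bookkeeping as in the proofs of Theorems~\ref{theoremnon4} and~\ref{theoremnon5}: conjugating a single recovered twist by powers of a rotation sweeps it along a chain, and once enough twists are present the remaining slides are extracted from $\rho_2$ and $\rho_3$. Step (b) is then immediate, since the permutations induced by $\sigma,\tau,W$ on the punctures already generate $Sym_p$.

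For even $g=2r+2\ge 14$ the scheme is identical in spirit, but Korkmaz's list is longer (the extra puncture slides $v_{g-1,i}$) and the model is less symmetric, so one augments the family with the involution $K$ and two further composites $\rho_4=Jt_{f_{r+1}}$ and $\rho_5=Jt_{c_r}$, with a minor split on the parity of $r$ so that the relevant curve is genuinely $J$-invariant (so that $\rho_4$ should instead be taken with an interchanged $f_{r+1}$) or $J$-reversed. One then re-runs the extraction argument to show these eleven involutions $\sigma,\tau,I,J,W,K,\rho_1,\dots,\rho_5$ generate $\Mod_0(N_{g,p})$, and surjectivity onto $Sym_p$ follows from the same elements as in the odd case.

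I expect the genuine obstacle to lie in step (a) for even genus: one must arrange the fixed family of reflections so that their pairwise products and rotation-conjugates reach \emph{every} twist in the enlarged collection $\mathcal C$ — especially the curves $f_i$ and $f_{r+1}$, which sit asymmetrically with respect to the available reflections — and then recover all $2p$ puncture slides rather than just $p$. This curve-and-involution combinatorics is delicate and is precisely where the genus bounds $g\ge 13$ (odd) and $g\ge 14$ (even) enter. The many conjugation identities on curves are routine but voluminous; I would organize the verification by first establishing the chain relations among the $t_{b_i}$ and $t_{c_i}$, and only afterwards handling the boundary generators $t_{a_1},t_{a_2},y$ and the puncture slides.
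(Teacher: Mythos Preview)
Your proposal is correct and follows essentially the same approach that the paper attributes to Yoshihara: reduce to Korkmaz's generators for $\Mod_0(N_{g,p})$ via the exact sequence, use the same eight involutions $\sigma,\tau,I,J,W,\rho_1,\rho_2,\rho_3$ in the odd case and add $K,\rho_4,\rho_5$ in the even case, and obtain surjectivity onto $Sym_p$ from $\sigma,\tau,W$. Your two-step outline, the specific composite involutions, and the parity split for $\rho_4$ all match the paper's description; note only that the paper's phrasing ``the group generated by these eight involutions is contained in Korkmaz's generating set'' is a slip, and your version (the subgroup \emph{contains} Korkmaz's generators) is the intended direction.
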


In~\cite{altunoz-pamuk-yildiz5}, an improvement of Yoshihara's result was presented. We now state the new result, which says that $\Mod(N_{g,p})$ can be generated by five elements or six involutions for $g\geq 14$ and $p\geq 1$. Below, we give the generating sets and outline the key ideas of the proof.
\begin{figure}[h]
\begin{center}
\scalebox{0.38}{\includegraphics{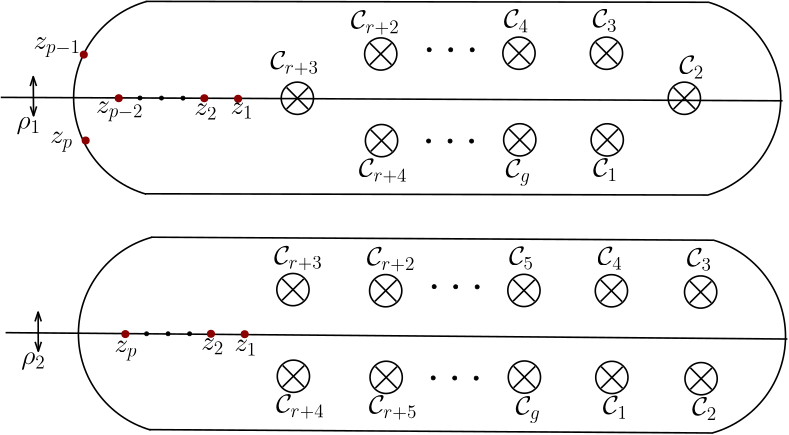}}
\caption{The reflections $\rho_1,\rho_2$ on $N_{g,p}$ for $g=2r+2$.}
\label{nonpunctured_evenrho1-2}
\end{center}
\end{figure}

\begin{figure}[h]
\begin{center}
\scalebox{0.32}{\includegraphics{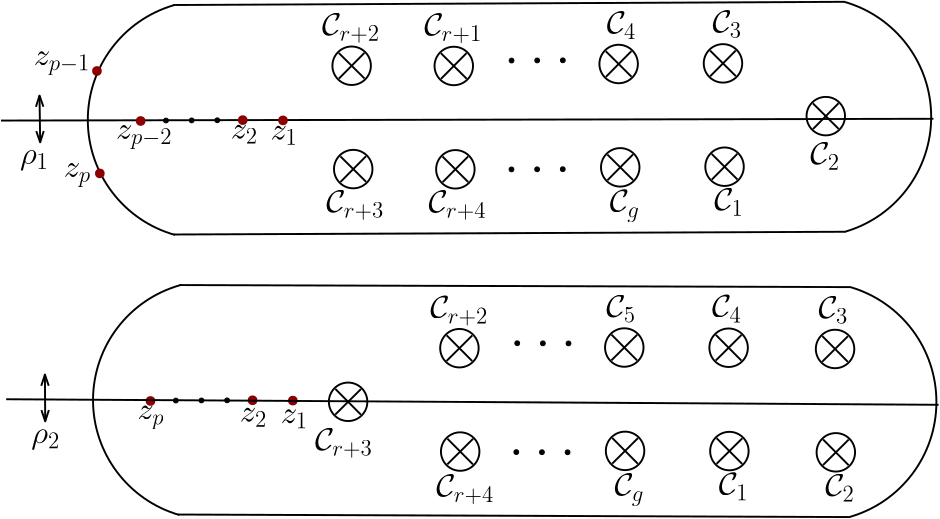}}
\caption{The reflections $\rho_1,\rho_2$ on $N_{g,p}$ for $g=2r+1$.}
\label{nonpunctured_oddrho1-2}
\end{center}
\end{figure}
We work with models of $N_{g,p}$ that are invariant under the reflections $\rho_1$ and $\rho_2$ (depicted in Figures~\ref{nonpunctured_evenrho1-2} and~\ref{nonpunctured_oddrho1-2}). Observe that the mapping class group $\Mod(N_{g,p})$ includes the element $T = \rho_2 \rho_1$.
\begin{figure}[hbt!]
\begin{center}
\scalebox{0.33}{\includegraphics{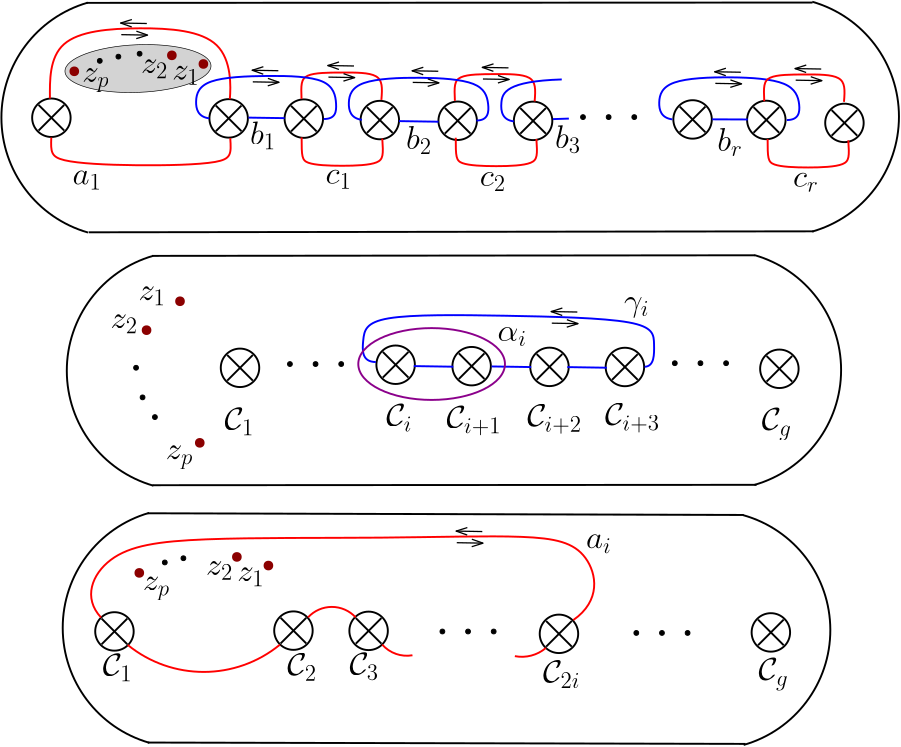}}
\caption{The curves $a_i,b_i,c_i,\gamma_i$ and $\alpha_i$ on  $N_{g,p}$ (note that we do not have $c_r$ in the case of odd genus).}
\label{nonpunctured_curves}
\end{center}
\end{figure}

\begin{figure}[hbt!]
\begin{center}
\scalebox{0.3}{\includegraphics{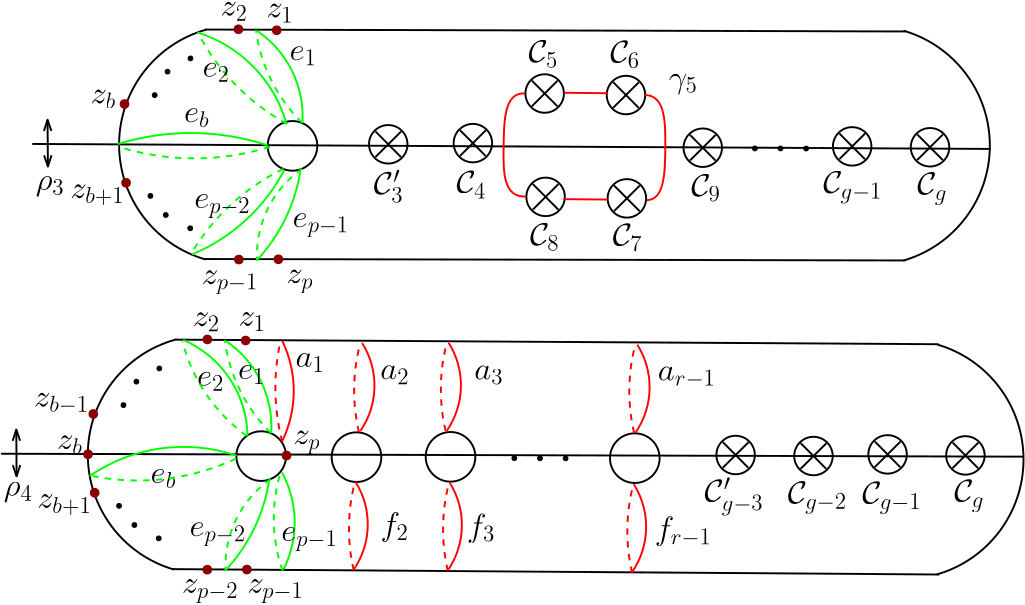}}
\caption{The reflections $\rho_3$ and $\rho_4$ on $N_{g,p}$ if $p=2b$  (If $g$ is odd, the last crosscap is omitted).}
\label{ER34}
\end{center}
\end{figure}

\begin{figure}[hbt!]
\begin{center}
\scalebox{0.3}{\includegraphics{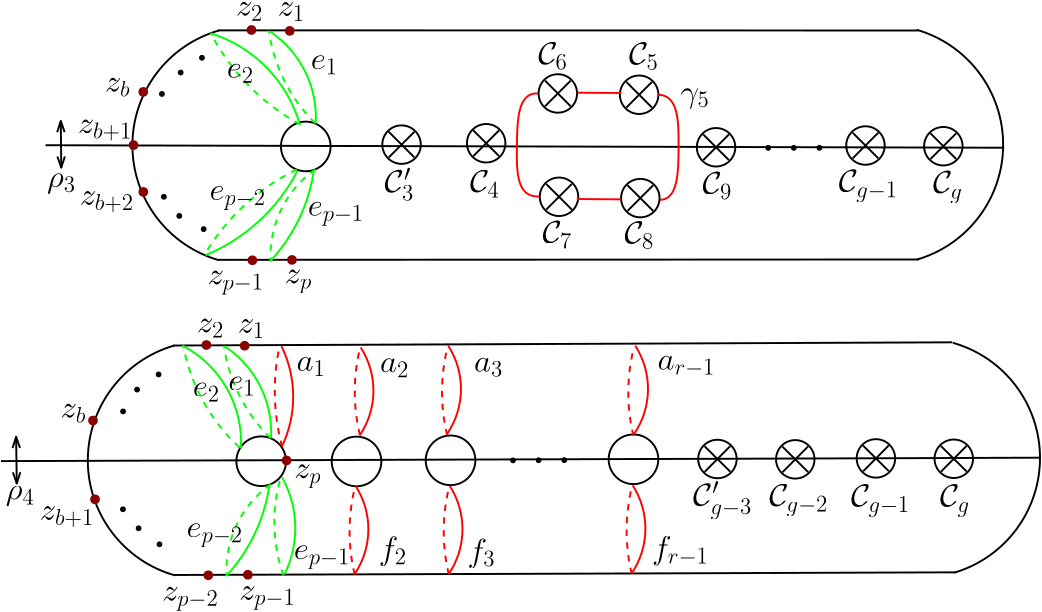}}
\caption{The reflections $\rho_3$ and $\rho_4$ on $N_{g,p}$ if $p=2b+1$  (If $g$ is odd, the last crosscap is omitted).}
\label{OR34}
\end{center}
\end{figure}

Let $u_i$ be the crosscap transposition corresponding to a diffeomorphism of the Klein bottle with one boundary component, where this boundary is the curve $\alpha_i$ as in Figure~\ref{nonpunctured_curves}.
\begin{theorem}\label{nonpunctured_thm_five}
For $g=2r+1\geq 15$ or $g=2r+2\geq 14$, the mapping class group $\Mod(N_{g,p})$ is generated by five elements: \(  T, \rho_3, \rho_4, t_{\gamma_5}t_{b_1}u_{r+5}\) and \(v_{r+3,1}.\)
\end{theorem}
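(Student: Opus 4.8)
Set $G_{1}=t_{\gamma_{5}}t_{b_{1}}u_{r+5}$ and $G=\langle T,\rho_{3},\rho_{4},G_{1},v_{r+3,1}\rangle$. The plan is to exhibit inside $G$ a previously known generating set, the natural target being Korkmaz's generating set for the subgroup $\Mod_{0}(N_{g,p})$ from Theorem~\ref{puncture_non_thm}. Once we know that $G\supseteq\Mod_{0}(N_{g,p})$ and that the images of $T,\rho_{3},\rho_{4}$ in $Sym_{p}$ generate $Sym_{p}$, the exact sequence
\[
1\longrightarrow \Mod_{0}(N_{g,p})\longrightarrow \Mod(N_{g,p})\longrightarrow Sym_{p}\longrightarrow 1
\]
forces $G=\Mod(N_{g,p})$.

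The first and longest step is to recover all Dehn twists about the curves of Figure~\ref{nonpunctured_curves} lying on the closed part of $N_{g,p}$, together with a crosscap slide. Since conjugation by $T$ shifts the indices of the curves $\alpha_{i}$ and $\gamma_{i}$ and cyclically permutes the chain of curves $a_{i},b_{i},c_{i}$, conjugating $G_{1}$ by powers of $T$ produces many elements of the shape $u_{j}\,t_{\delta}\,t_{\delta'}$, a crosscap transposition times two Dehn twists with controlled supports. As in the proof of Theorem~\ref{theoremnon4}, I would then form products and conjugates of these so that a crosscap transposition and one of the two twists cancel, leaving ratios $t_{\delta}t_{\delta'}^{-1}$; chaining such ratios yields $t_{a_{1}}t_{a_{2}}^{-1}$ and $t_{b_{1}}t_{b_{2}}^{-1}$, and a further manipulation isolates the crosscap transposition $u_{g-1}$. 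At that point $G$ contains the four elements $T$, $t_{a_{1}}t_{a_{2}}^{-1}$, $t_{b_{1}}t_{b_{2}}^{-1}$, $u_{g-1}$ of Theorem~\ref{theoremnon3}, whose proof---which uses the lantern relation and only involves curves on a subsurface of $N_{g,p}$ disjoint from the punctures---applies verbatim to split these into the individual twists $t_{a_{1}},t_{a_{2}},t_{b_{i}},t_{c_{i}}$ and to produce a crosscap slide $y$. Finally, the curves $f_{i}$ (and, for even $g$, the remaining $c_{i}$) are obtained by conjugating the twists just found by $\rho_{3}$ and $\rho_{4}$, which carry the standard chains onto the ones running past the punctures, and the curves $e_{k}$ joining consecutive punctures are reached by transporting known twists with $\rho_{3},\rho_{4}$ and $v_{r+3,1}$.

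The second step handles punctures. We already have $v_{r+3,1}\in G$; conjugating it by the Dehn twists and crosscap transpositions now known to be in $G$ moves the one-sided defining curve of the slide through all the needed positions, while conjugating by $\rho_{3}$ and $\rho_{4}$ moves the sliding puncture among $z_{1},\dots,z_{p}$. This produces all puncture slides $v_{g,i}$ (and all $v_{g-1,i}$ when $g$ is even), so by Theorem~\ref{puncture_non_thm} we obtain $\Mod_{0}(N_{g,p})\subseteq G$. It then remains to inspect how $T$, $\rho_{3}$, $\rho_{4}$ permute $P=\{z_{1},\dots,z_{p}\}$ in the models of Figures~\ref{ER34} and~\ref{OR34}: one checks that these permutations contain a $p$-cycle together with a transposition, hence generate $Sym_{p}$, which completes the argument.

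The main obstacle will be the bookkeeping in the first step: peeling the individual twists $t_{a_{i}},t_{b_{i}},t_{c_{i}}$ and the crosscap transposition $u_{g-1}$ off the \emph{single} composite element $G_{1}$ using only conjugation by $T$ requires a long, explicitly genus-dependent chain of products, and it is precisely here that the hypotheses $g\ge 15$ (odd) and $g\ge 14$ (even) are needed, so that the indices occurring in the intermediate curves stay distinct and within the range described by Figure~\ref{nonpunctured_curves}. The genuinely new difficulty beyond the closed case is geometric rather than algebraic: one must verify that $\rho_{3},\rho_{4}$ and $v_{r+3,1}$ interact with the closed-part generators so that \emph{every} curve $f_{i}$, $e_{k}$ and \emph{every} puncture slide is actually reached, and that the induced permutations of $P$ are rich enough to generate $Sym_{p}$. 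Once these facts about the figures are pinned down, the remaining manipulations are of exactly the same nature as in the proofs of Theorems~\ref{theoremnon3} and~\ref{theoremnon4}.
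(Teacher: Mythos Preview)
Your proposal is correct and follows essentially the same strategy as the paper's proof: use the exact sequence with kernel $\Mod_{0}(N_{g,p})$, recover Korkmaz's generators inside $G$ by first extracting the individual twists $t_{a_i},t_{b_i},t_{c_i}$ and a crosscap slide via $T$-conjugation of $G_1$ together with the lantern relation (exactly as in Theorems~\ref{theoremnon3} and~\ref{theoremnon4}), then obtain the $t_{f_i}$, $t_{e_k}$ and all puncture slides through $\rho_3,\rho_4$ and $v_{r+3,1}$, and finally check surjectivity onto $Sym_p$. The only cosmetic difference is in the last step: the paper uses $\rho_3\rho_4$ for the $p$-cycle and the element $\rho_2$ (shown to lie in $G$ once enough of $\Mod_0$ is recovered) for the transposition $(p,p-1)$, rather than reading the permutations directly off $T,\rho_3,\rho_4$ as you suggest.
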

Let $G$ be the subgroup of $\Mod(N_{g,p})$ generated by the elements   $T$, $\rho_3$, $\rho_4$, $t_{\gamma_5}t_{b_1}u_{r+5}$ and $v_{r+3,1}$. The proof follows several steps. First, we show that $G$ contains the Dehn twist generators $t_{a_i}$, $t_{b_i}$, $t_{c_i}$ and $t_{f_i}$ of $\Mod_0(N_{g,p})$ by using conjugation relations and a lantern relation. Here, the diffeomorphism $T$ plays a crucial role due to its action on the curves:
\begin{itemize}
    \item For $i,j = 1,\ldots,r-1$, 
    \(
    T(b_i) = c_i \quad \text{and} \quad T(c_j) = b_{j+1}.
    \)
    \item $T(a_1) = b_1$ and also $T^2(b_r)=a_1$ if $g=2r+1$; and $T(b_r)=c_r$, $T^2(c_r)=a_1$ if $g=2r+2$.
\end{itemize}
Next, the crosscap slide $y=t_{a_1}u_1$ and the puncture slides $v_{g,i}$ (and also $v_{g-1,i}$ if $g$ is even) for $i=1,2,\ldots,p$ are shown to belong to the subgroup $G$. This is achieved by using the actions of $T$ and the reflections $\rho_3$ and $\rho_4$. Additionally, the generators $t_{e_k}$ are contained in $G$ through the actions of $\rho_3$ and $\rho_4$. In the final step of the proof, we show that the elements $\rho_3\rho_4 \in G$ and $\rho_2 \in G$ are mapped to the permutations $(1,2,\ldots,p)$ and $(p, p-1)$ in $Sym_p$, respectively. Since these two permutations generate the entire symmetric group $Sym_p$, the proof is complete.

To construct a generating set consisting entirely of involutions, the elements from the generating set in the preceding theorem are replaced with involutions, ensuring the new set still generates $\Mod(N_{g,p})$. The generating set from Theorem~\ref{invo} is as follows.
\begin{theorem}\label{invo}
    For any genus $g \geq 14$ where $g = 2r + 1 \geq 15$ (odd genus), or $g = 2r + 2 \geq 14$ (even genus), the mapping class group $\Mod(N_{g,p})$ is generated by the following involutions:
    \[
        \begin{cases} 
            \{\rho_1, \rho_2, \rho_3, \rho_4, \rho_3t_{\gamma_5}t_{b_1}u_{r+5}, \rho_2w_{r+3,1}\} & \text{if } g = 2r + 1, \\  
             \{\rho_1, \rho_2, \rho_3, \rho_4, \rho_3t_{\gamma_5}t_{b_1}u_{r+5}, \rho_1w_{r+3,1}\}  & \text{if } g = 2r + 2.
        \end{cases}
    \]
\end{theorem}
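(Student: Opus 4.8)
The plan is to derive Theorem~\ref{invo} from the five‑element generating set of Theorem~\ref{nonpunctured_thm_five}, by showing that the subgroup $G'$ generated by the six listed involutions already contains each of the five elements $T$, $\rho_3$, $\rho_4$, $t_{\gamma_5}t_{b_1}u_{r+5}$, and $v_{r+3,1}$. Three of these are immediate: $\rho_3$ and $\rho_4$ occur verbatim in the list, and $T=\rho_2\rho_1\in G'$ since both reflections $\rho_1,\rho_2$ are among the six generators. Writing $G_1:=t_{\gamma_5}t_{b_1}u_{r+5}$, the element $\rho_3G_1$ is one of the six generators, so $G_1=\rho_3(\rho_3G_1)\in G'$. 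Finally, from the sixth generator $\rho_2w_{r+3,1}$ (odd genus), respectively $\rho_1w_{r+3,1}$ (even genus), we recover the puncture slide as $w_{r+3,1}=\rho_2(\rho_2w_{r+3,1})$, respectively $\rho_1(\rho_1w_{r+3,1})$; since the proof of Theorem~\ref{nonpunctured_thm_five} only uses $v_{r+3,1}$ through its being a puncture slide along a suitable one‑sided curve, one either runs that proof with $w_{r+3,1}$ in its place, or conjugates $w_{r+3,1}$ by the appropriate product of Dehn twists (already shown there to lie in the group) to produce $v_{r+3,1}$ itself. Either way $G'$ contains the whole five‑element generating set, hence $G'=\Mod(N_{g,p})$; in particular surjectivity onto $Sym_p$ is inherited from Theorem~\ref{nonpunctured_thm_five}.

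The part requiring genuine verification is that the six proposed elements really are involutions. The four reflections $\rho_1,\rho_2,\rho_3,\rho_4$ are order‑two by construction, being reflections of the chosen symmetric models of $N_{g,p}$ in Figures~\ref{nonpunctured_evenrho1-2}, \ref{nonpunctured_oddrho1-2}, \ref{ER34}, \ref{OR34}. For $\rho_3t_{\gamma_5}t_{b_1}u_{r+5}$ one checks, exactly as in the proof of Theorem~\ref{theoremnon5}, that $\rho_3$ preserves each of the curves $\gamma_5$, $b_1$ and the one‑holed Klein bottle bounded by $\alpha_{r+5}$ while reversing local orientation, so that $\rho_3t_{\gamma_5}\rho_3=t_{\gamma_5}^{-1}$, $\rho_3t_{b_1}\rho_3=t_{b_1}^{-1}$ and $\rho_3u_{r+5}\rho_3=u_{r+5}^{-1}$; the three supports being pairwise disjoint, the factors commute, so $\rho_3G_1\rho_3=G_1^{-1}$ and $(\rho_3G_1)^2=1$ (using that $\rho x=\rho$‑conjugate‑inverse forces $\rho x$ to be an involution). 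For $\rho_2w_{r+3,1}$, respectively $\rho_1w_{r+3,1}$, the corresponding check is that the relevant reflection fixes the one‑sided curve defining the puncture slide $w_{r+3,1}$ and the puncture $z_1$ while reversing local orientation, so that it conjugates $w_{r+3,1}$ to $w_{r+3,1}^{-1}$.

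Putting these together finishes the proof. \textbf{The main obstacle} I expect is the purely geometric bookkeeping: one must arrange the models of $N_{g,p}$ in Figures~\ref{nonpunctured_evenrho1-2}--\ref{OR34} so that all of $\rho_1,\rho_2,\rho_3,\rho_4$ act \emph{simultaneously} as isometries, pin down precisely which one‑sided curve $w_{r+3,1}$ is and verify $\rho_2w_{r+3,1}\rho_2=w_{r+3,1}^{-1}$ (resp. with $\rho_1$), and — when $w_{r+3,1}\neq v_{r+3,1}$ — exhibit the explicit element of $G'$ conjugating one puncture slide to the other. Once the correct symmetric models and curves are fixed, the conjugation relations $\rho_3t_{\gamma_5}\rho_3=t_{\gamma_5}^{-1}$ etc. and the reduction to Theorem~\ref{nonpunctured_thm_five} are routine.
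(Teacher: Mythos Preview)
Your proposal is correct and follows exactly the strategy the paper indicates: the paper does not give a detailed proof of Theorem~\ref{invo} but simply states that ``the elements from the generating set in the preceding theorem are replaced with involutions, ensuring the new set still generates $\Mod(N_{g,p})$,'' and your argument carries this out by recovering $T=\rho_2\rho_1$, $\rho_3$, $\rho_4$, $G_1=\rho_3(\rho_3G_1)$, and the puncture slide from the six involutions, together with the standard $\rho x\rho=x^{-1}\Rightarrow(\rho x)^2=1$ check. Your caveat about the notational shift from $v_{r+3,1}$ to $w_{r+3,1}$ and the need to verify the disjointness/conjugation claims against the specific models in Figures~\ref{nonpunctured_evenrho1-2}--\ref{OR34} is well placed, but these are precisely the routine geometric checks the paper leaves to the reader (and to the cited reference~\cite{altunoz-pamuk-yildiz5}).
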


\section{Generating sets for the twist subgroup}

The twist subgroup\index{twist subgroup} $\mathcal{T}_g$ is the subgroup generated by all Dehn twists about two-sided simple closed curves in the nonorientable surface $N_g$. The twist subgroup has index $2$ in $\Mod(N_g)$ as shown in~\cite{lickorishnon2}. It plays an important role in the study of mapping class groups of nonorientable surfaces, analogous to the role of the subgroup of Dehn twists in the orientable case. The study of generators for $\mathcal{T}_g$ has seen significant developments over time. Chillingworth~\cite{chillingworth} showed that this group can be generated by finitely many Dehn twists. Stukow~\cite{stukow1} provided an explicit finite presentation using $g+2$ Dehn twist generators. Omori~\cite{omori} later improved this result, showing that $g+1$ Dehn twists suffice if $g\geq 4$. It is not known whether this number is minimal.

When the generators are not required to be Dehn twists, there are some interesting results. Du~\cite{du} showed that for $g=4k+1 \geq 9$, $\mathcal{T}_g$ can be generated by just three elements: two involutions and one element of order $2g$. Recently, Yoshihara~\cite{yoshihara1} was interested in finding a generating set for $\mathcal{T}_g$ consisting of only involutions. He proved that $\mathcal{T}_g$ can be generated by six involutions when $g \geq 14$ and eight involutions when $g \geq 8$. Note that if a group is generated by involutions, its first integral homology group $H_1$ must contain elements of order $2$. This property holds for the twist subgroup $\mathcal{T}_g$ when $g \geq 5$~\cite{stukow}. The authors improved Yoshihara's result. In~\cite{altunoz-pamuk-yildiz4} they proved that the twist subgroup $\mathcal{T}_g $ is generated by:
\begin{compactenum}
\item 4 involutions when either:
  \begin{compactitem}
  \item $g\geq12$ is even, or
  \item $g=4k+1\geq5$.
  \end{compactitem}
\item 5 involutions when either:
  \begin{compactitem}
  \item $g=4k+3\geq11$, or
  \item $g=8,10$.
  \end{compactitem}
\item 6 involutions for $g=6,7$.
\end{compactenum}
Moreover, for $g\geq13$, the authors~\cite{altunoz-pamuk-yildiz3} obtained generating sets for $\mathcal{T}_g$ with three elements: two of which are involutions, and a third element whose order is $g$ or $g-1$ depending on whether $g$ is odd or even. Recently, Leśniak and Szepietowski~\cite{lesniak-szepietowski} showed that for 
$g\neq 4$, $\mathcal{T}_g$ can be generated by three torsion elements, each of whose orders depends on $g$. Since the twist subgroup is not cyclic, a generating set must include at least two elements. Whether $\mathcal{T}_g$ can be generated by two elements was previously unknown. The authors proved that such a generating set exists for odd $g\geq 21$ and even $g\geq 50$~\cite{altunoz-pamuk-yildiz2}. The twist subgroup $\mathcal{T}_g$ is known to be perfect if $g\geq7$~\cite{korkmaz1998,korkmaz2002}. It is interesting to determine for which perfect groups the minimal number of generators is equal to the minimal number of commutator generators. In the orientable case, the mapping class group is a perfect group for $g\geq 3$. It was proved that this group is generated by two commutators if $g\geq 5$, and by three commutators if $g\geq 3$~\cite{baykur-korkmaz2021}. For nonorientable surfaces, analogous results were obtained in~\cite{altunoz-pamuk-yildiz2}, where it was proved that $\mathcal{T}_g$ is generated by
\begin{compactenum}
\item two commutators when either:
  \begin{compactitem}
  \item $g=2r+2\geq50$ or  
  \item $g=4k+1\geq29$.
  \end{compactitem}
\item three commutators when $g=4k+3\geq43$.
\end{compactenum}

In this section, we collect known results about generating sets for the twist subgroup $\mathcal{T}_g$, including proof sketches where the arguments are not too technical. Let $\lbrace x_1, x_2, \ldots, x_{g-1}\rbrace$ be a basis for $H_1(N_g; \mathbb{R})$ such that the curves $x_i$ are pairwise disjoint one-sided curves as shown in Figure~\ref{homology_non}. Since any diffeomorphism $f: N_g \to N_g$ induces a linear map $f_{\ast}:H_1(N_g;\mathbb{R}) \to H_1(N_g;\mathbb{R})$, one can define a determinant homomorphism $D:\Mod(N_g) \to \mathbb{Z}_{2}$ by $D(f)=\textrm{det}(f_{\ast})$. The following lemma from~\cite{lickorishnon1} provides a criterion for determining when a mapping class in $\Mod(N_g)$ belongs to $\mathcal{T}_g$.
\begin{figure}[h]
\begin{center}
\scalebox{0.34}{\includegraphics{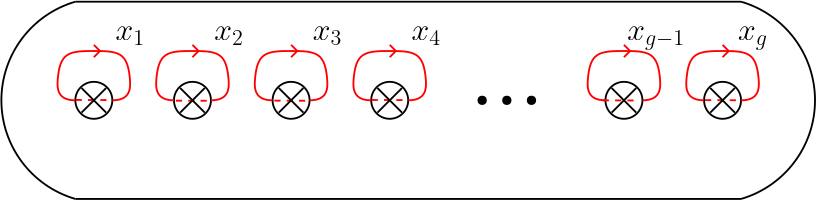}}
\caption{Generators of $H_1(N_g;\mathbb{R})$.}
\label{homology_non}
\end{center}
\end{figure}

\begin{lemma}\label{lemma_twist} 
A mapping class $f\in  \Mod(N_g)$ satisfies:
\begin{itemize}
\item $D(f) = 1$ if and only if $f\in \mathcal{T}_g$,
\item $D(f) = -1$ if and only if $f \not \in \mathcal{T}_g$.
\end{itemize}
\end{lemma}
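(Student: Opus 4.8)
The statement to prove is Lemma~\ref{lemma_twist}, characterizing membership in the twist subgroup $\mathcal{T}_g$ via the determinant of the induced action on $H_1(N_g;\mathbb{R})$. Here is how I would approach it.

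\medskip

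The plan is to show that $D: \Mod(N_g) \to \mathbb{Z}_2 = \{\pm 1\}$ is a well-defined group homomorphism whose kernel is exactly $\mathcal{T}_g$. First I would check that $D$ is well-defined and multiplicative: since isotopic diffeomorphisms induce the same map on homology, $f \mapsto f_\ast$ descends to the mapping class group, and $\det$ is multiplicative, so $D(fg) = D(f)D(g)$; moreover $\det(f_\ast) = \pm 1$ because $f_\ast$ is an automorphism of the lattice $H_1(N_g;\mathbb{Z})/(\text{torsion}) \cong \mathbb{Z}^{g-1}$ sitting inside $H_1(N_g;\mathbb{R})$, so its determinant is an integer unit. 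This establishes that $D$ is a homomorphism to $\{\pm 1\}$, and the two bulleted statements are then equivalent to the single claim $\ker D = \mathcal{T}_g$.

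\medskip

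Next I would prove the inclusion $\mathcal{T}_g \subseteq \ker D$. Since $\mathcal{T}_g$ is generated by Dehn twists $t_\alpha$ about two-sided simple closed curves, and $\ker D$ is a subgroup, it suffices to show $D(t_\alpha) = 1$ for every such twist. Working in the basis $\{x_1,\dots,x_{g-1}\}$ of pairwise disjoint one-sided curves from Figure~\ref{homology_non}, one computes that $t_\alpha$ acts on $H_1$ by a transvection: $(t_\alpha)_\ast(x) = x + ([\alpha]\cdot[x])\,[\alpha]$ for a suitable $\mathbb{Z}_2$- or $\mathbb{Z}$-valued intersection pairing (for a two-sided curve $\alpha$ the homology class $[\alpha]$ and the algebraic intersection number are well-defined up to the ambiguity handled by working over $\mathbb{R}$). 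A transvection has determinant $1$, since in a basis adapted to $[\alpha]$ it is upper-triangular with ones on the diagonal; hence $D(t_\alpha) = 1$. This gives $\mathcal{T}_g \subseteq \ker D$.

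\medskip

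For the reverse inclusion $\ker D \subseteq \mathcal{T}_g$, I would use that $\mathcal{T}_g$ has index $2$ in $\Mod(N_g)$ (cited from~\cite{lickorishnon2}) together with the fact that $D$ is \emph{nontrivial}, i.e.\ surjective onto $\{\pm 1\}$. Nontriviality follows by exhibiting one mapping class with determinant $-1$: the crosscap slide $Y_{\mu,\alpha}$ (equivalently a crosscap transposition, or any homeomorphism reversing a crosscap) reverses the local orientation near a one-sided curve and sends some basis element $x_i \mapsto -x_i$ while fixing or permuting the others, so its matrix has determinant $-1$. Then $\ker D$ is an index-$2$ subgroup containing $\mathcal{T}_g$, which is itself of index $2$; comparing indices forces $\ker D = \mathcal{T}_g$. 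The main obstacle in making this fully rigorous is the bookkeeping of signs in the homology action — ensuring that twists about two-sided curves genuinely act as honest transvections with determinant $+1$ despite $N_g$ being nonorientable (so that intersection numbers are only well-defined over $\mathbb{Z}_2$ in general), which is precisely why the statement is phrased over $\mathbb{R}$ and why one restricts to the sublattice spanned by the one-sided curves $x_i$; this is the content attributed to Lickorish~\cite{lickorishnon1} and I would cite it for the delicate sign computation rather than reprove it from scratch.
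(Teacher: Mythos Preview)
The paper does not give its own proof of this lemma; it is stated with attribution to Lickorish~\cite{lickorishnon1} and used as a black box. Your argument is the standard one and is correct: $D$ is a homomorphism, Dehn twists about two-sided curves act as transvections on $H_1(N_g;\mathbb{R})$ (hence have determinant $+1$), a crosscap slide has determinant $-1$, and then the index-$2$ fact forces $\ker D = \mathcal{T}_g$. The one genuinely delicate step, which you correctly isolate, is justifying the transvection formula on a nonorientable surface; this works because a \emph{two-sided} curve $\alpha$ carries a consistent normal orientation, so algebraic intersection numbers $\langle \alpha, x\rangle$ with arbitrary cycles $x$ are well-defined integers and $(t_\alpha)_\ast(x) = x + \langle \alpha, x\rangle\,[\alpha]$ holds over $\mathbb{Z}$ (and hence over $\mathbb{R}$).
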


Consider the surface $N_g$ as depicted in Figure~\ref{NG}. Omori's Dehn twist generators for the twist subgroup $\mathcal{T}_g$ are given as follows, where the curve $\epsilon$ is as in Figure~\ref{generators_twist} (note that the curve $c_{r}$ does not exist when $g$ is odd).
\begin{theorem}\cite{omori}\label{twistthm1}
The twist subgroup $\mathcal{T}_g$ is generated by the following $(g+1)$ Dehn twists:
\begin{itemize}
\item $t_{a_1},t_{a_2},t_{b_1},\ldots, t_{b_r}$, $t_{c_1},\ldots, t_{c_{r-1}}$ and $t_{\epsilon}$ if $g=2r+1$, and 
\item $t_{a_1},t_{a_2},t_{b_1},\ldots, t_{b_r}$, $t_{c_1},\ldots, t_{c_{r-1}},t_{c_{r}} $ and $t_{\epsilon}$ if $g=2r+2$.
\end{itemize}
\end{theorem}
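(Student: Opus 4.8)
The plan is to start from the generating set of $\Mod(N_g)$ provided by Theorem~\ref{thmnon1}, extract from it a Dehn-twist generating set for the index-two subgroup $\mathcal{T}_g$, and then pare that down to the listed $g+1$ twists using a handful of well-chosen relations among Dehn twists on $N_g$.

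First I would produce a convenient generating set to reduce from. By Theorem~\ref{thmnon1}, $\Mod(N_g)$ is generated by a crosscap slide $Y$ together with the Dehn twists $t_{a_1}, t_{a_2}, t_{b_1},\ldots,t_{b_r}, t_{c_1},\ldots,t_{c_k}$ (with $k=r$ or $r-1$ as in that theorem). Take $Y$ to be supported in the Klein bottle $K$ cut off by the curve $e$ of Figure~\ref{NG}. Then $D(Y)=-1$, so $Y\notin\mathcal{T}_g$ by Lemma~\ref{lemma_twist}, and since $[\Mod(N_g):\mathcal{T}_g]=2$ the Reidemeister--Schreier rewriting with transversal $\{1,Y\}$ shows that $\mathcal{T}_g$ is generated by the displayed Dehn twists, their $Y$-conjugates $t_c^{Y}=t_{Y(c)}^{\pm1}$ (again Dehn twists, by the conjugation relation $ft_\alpha f^{-1}=t_{f(\alpha)}^{\pm1}$), and the single element $Y^2=t_{\partial K}$. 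Because $Y$ is supported in $K$, it fixes every curve among the $a_i,b_i,c_i$ disjoint from $K$, so only finitely many genuinely new twists $t_{Y(c)}$ occur. The task therefore reduces to expressing each of those finitely many twists, and $t_{\partial K}$, in terms of the Dehn twists of Theorem~\ref{thmnon1} together with the one extra generator $t_\epsilon$.

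The heart of the argument is then a sequence of substitutions driven by the lantern relation and the chain relations. Concretely, I would locate an embedded four-holed sphere in $N_g$ — respectively a short chain of two-sided curves — whose cuffs and core curves lie among $\epsilon$, $\partial K$, and the curves $\{a_i,b_i,c_i\}$, and use the corresponding relation to solve for $t_{\partial K}$, hence for $Y^2$, as a word in the chosen $g+1$ twists. The remaining new twists $t_{Y(c)}$ are treated the same way: one transports the curve $Y(c)$ back onto a standard curve by conjugating with a suitable product of chosen twists and repeatedly invoking $ft_\alpha f^{-1}=t_{f(\alpha)}^{\pm1}$. As a consistency check one can track everything through the homology action, i.e.\ the epimorphism of $\Mod(N_g)$ onto $\operatorname{Sp}(2\lfloor\tfrac{g-1}{2}\rfloor;\mathbb{Z}_2)$, to confirm that no subgroup has been lost. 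Finally I would tally: $|\{t_{a_1},t_{a_2}\}|+|\{t_{b_i}\}_{i=1}^{r}|+|\{t_{c_i}\}|+|\{t_\epsilon\}|$ equals $2+r+(r-1)+1=g+1$ for $g=2r+1$ and $2+r+r+1=g+1$ for $g=2r+2$, matching the statement.

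I expect the main obstacle to be the bookkeeping in the middle step: one must pin down the isotopy classes of $Y(a_i),Y(b_i),Y(c_i)$ and $\partial K$ precisely enough to recognize them after the chosen twists have acted, and must arrange the lantern/chain configurations so that every auxiliary twist that is introduced is eventually cancelled. This is relation-intensive and geometric rather than conceptually deep, but it is where all the work lies. A cleaner alternative, probably closer to the way the result was originally obtained, is to begin instead from Stukow's explicit presentation of $\mathcal{T}_g$ on $g+2$ Dehn twists and to exhibit one of those generators as a consequence of the defining relations — trading the Reidemeister--Schreier setup for the task of spotting the right redundancy in a known presentation.
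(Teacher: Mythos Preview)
The paper does not contain a proof of this statement; Theorem~\ref{twistthm1} is simply quoted from Omori~\cite{omori} with no argument or sketch supplied, so there is no ``paper's own proof'' to compare against.

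That said, a couple of remarks on your plan. Your Reidemeister--Schreier route is in principle workable, but one point needs care: the identity $Y^2=t_{\partial K}$ that you invoke is stated in the paper for the crosscap \emph{transposition} $u_{\mu,\alpha}$, not for the crosscap slide $Y_{\mu,\alpha}=t_\alpha u_{\mu,\alpha}$; you should check which square you actually get before feeding it into the reduction. More importantly, your closing alternative --- start from Stukow's explicit $(g+2)$-generator presentation of $\mathcal{T}_g$~\cite{stukow1} and exhibit one generator as redundant --- is exactly the approach Omori takes in~\cite{omori}. He works inside Stukow's presentation and uses the listed relations to eliminate one Dehn twist, so if you want to reconstruct the original proof, that is the path to follow rather than the Schreier rewriting from $\Mod(N_g)$.
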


\subsection{Minimal number of generators for the twist subgroup}
 In this section, we focus on generating sets for the twist subgroup with minimal cardinality. It is known that such generating sets must contain at least two elements. A generating set containing two elements was obtained by the authors. They obtained a two-element generating set for the twist subgroup $\mathcal{T}_g$ if $g\geq 21$ is odd and $g\geq 50$ is even~\cite{altunoz-pamuk-yildiz2}. We now provide an explicit description of every element in the generating set mentioned in this result.
\begin{figure}[h]
\begin{center}
\scalebox{0.55}{\includegraphics{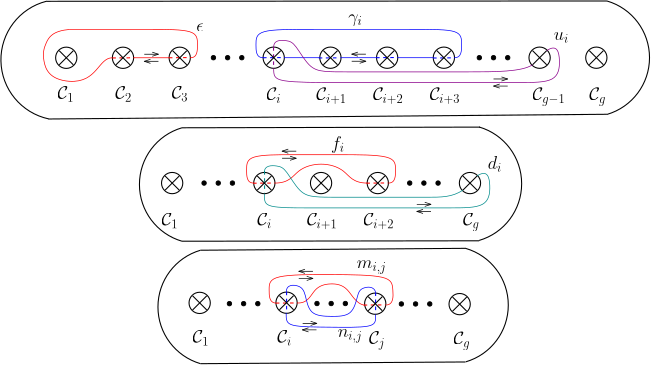}}
\caption{The curves $\epsilon, \gamma_i, u_i, f_i, d_i, m_{i,j}$ and $n_{i,j}$ on $N_g$.}
\label{generators_twist}
\end{center}
\end{figure}

We begin by defining a torsion element in our generating set: the rotation map $T$. This element's geometric interpretation depends on the parity of the genus. When $g$ is odd, $T$ is the rotation by $2\pi/g$ that cyclically permutes the crosscaps (see Figure~\ref{Todd_twist}); it has order $g$. When $g$ is even, $T$ is the rotation by $2\pi/(g-1)$ that cyclically permutes $g-1$ of the crosscaps while fixing one (see Figure~\ref{Teven_twist}); it has order $g-1$. In both configurations, the rotation $T$ satisfies $D(T)=1$, ensuring it belongs to $\mathcal{T}_g$.
 \begin{figure}[h]
\begin{center}
\scalebox{0.35}{\includegraphics{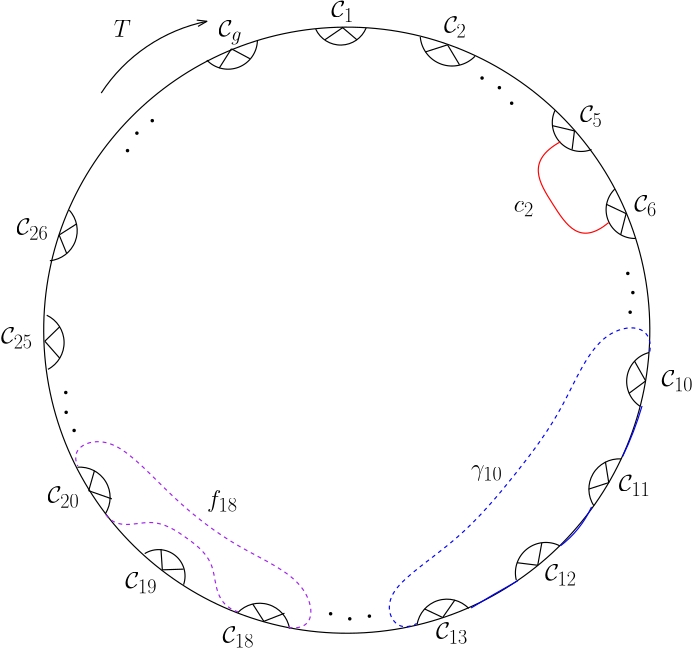}}
\caption{The rotation $T$ for $g=2r+1$.}
\label{Todd_twist}
\end{center}
\end{figure}

 \begin{figure}[h]
\begin{center}
\scalebox{0.35}{\includegraphics{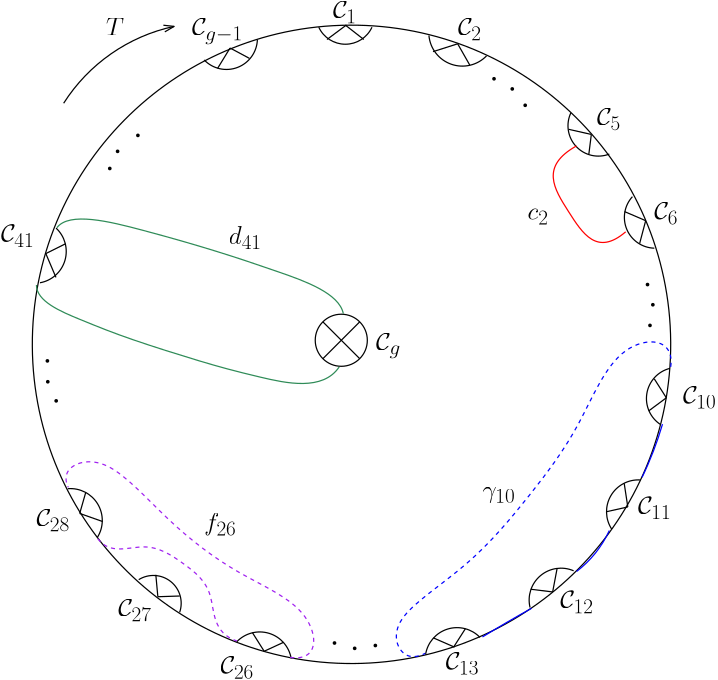}}
\caption{The rotation $T$ for $g=2r+2$.}
\label{Teven_twist}
\end{center}
\end{figure}
Now, we can give the generators for $\mathcal{T}_g$ from ~\cite{altunoz-pamuk-yildiz2}:
\begin{theorem}\label{theorem_twist_two}
The twist subgroup $\mathcal{T}_g$ is generated by the following two elements:
\begin{equation*}
 \begin{cases}
   T, t_{\gamma_{10}}t_{c_2}^{-1}t_{f_{18}} & \text {if } g=2r+1\geq 21,\\
        T, t_{\gamma_{10}}t_{c_2}^{-1}t_{f_{26}}t_{d_{41}}^{-1}
         & \text {if } g=2r+2\geq 50,
    \end{cases}    
\end{equation*}  
 where all curves are shown in Figures~\ref{Todd_twist} and ~\ref{Teven_twist}.
\end{theorem}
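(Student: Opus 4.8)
The plan is to mimic the strategy used in the proof of Theorem~\ref{theoremnon4}: fix the subgroup $G \leq \mathcal{T}_g$ generated by the two listed elements and show that $G$ contains a known generating set for $\mathcal{T}_g$ — namely Omori's Dehn twists from Theorem~\ref{twistthm1}. The rotation $T$ is already one of the two generators, so conjugation by powers of $T$ is available throughout; the whole game is to extract the individual twists $t_{a_1}, t_{a_2}, t_{b_i}, t_{c_i}$ (and $t_\epsilon$, and $t_{c_r}$ in the even case) from the single ``mixed'' element $t_{\gamma_{10}}t_{c_2}^{-1}t_{f_{18}}$ (resp.\ $t_{\gamma_{10}}t_{c_2}^{-1}t_{f_{26}}t_{d_{41}}^{-1}$) using the $T$-action on curves. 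The reason the second generator is a product of twists about \emph{disjoint} curves — with signs chosen so that it lies in $\mathcal{T}_g$ by Lemma~\ref{lemma_twist}, i.e.\ $D=1$ — rather than a single twist is precisely so that $G$ need not obviously contain any one twist at the start; the first task is to disentangle them.

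The key steps, in order, would be: (1) Record the action of $T$ (and its powers) on the relevant families of curves $\gamma_i, c_i, b_i, a_j, f_i, d_i$; since $T$ cyclically permutes crosscaps, these curves fall into $T$-orbits, and conjugating the second generator $G_1$ by suitable powers $T^{\pm k}$ produces a family of elements $G_1^{T^k}$, each again a product of twists about disjoint curves. (2) Form products and ratios $G_1^{T^j}(G_1^{T^k})^{-1}$ to cancel the ``matching'' factors in overlapping orbits, isolating shorter products — ideally single relative twists $t_\alpha t_\beta^{-1}$ — exactly as the chain of elements $G_2,\dots,G_6$ and the relative twists $t_{\gamma_6}t_{c_2}^{-1}$, $t_{b_2}t_{c_2}^{-1}$, etc.\ were produced in the proof of Theorem~\ref{theoremnon4}. (3) Chain these relative twists together (using $t_\alpha t_\beta^{-1}\cdot t_\beta t_\delta^{-1}=t_\alpha t_\delta^{-1}$ and conjugation by $T$) to reach $t_{a_1}t_{a_2}^{-1}$ and enough relative twists among the $b_i, c_i$. (4) Invoke the conclusion of Theorem~\ref{theoremnon4}'s argument (or Theorem~\ref{theoremnon3}) — adapted to stay inside $\mathcal{T}_g$ — to upgrade the relative twists to the individual twists $t_{a_1}, t_{a_2}, t_{b_i}, t_{c_i}$. (5) Recover the remaining Omori generator $t_\epsilon$ (and $t_{c_r}$ when $g$ is even) from the individual twists via a conjugate $t_\alpha^{T^k}$ for an appropriate $k$, using that $\epsilon$ (resp.\ $c_r$) lies in the $T$-orbit of one of the already-obtained curves; the genus bounds $g\geq 21$ (odd) and $g\geq 50$ (even) are exactly what is needed to ensure all the indices and orbit lengths involved ($\gamma_{10}, c_2, f_{18}, \ldots$ and $\gamma_{10},c_2,f_{26},d_{41}$) actually fit on the surface and remain distinct. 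Every element produced has $D=1$ by Lemma~\ref{lemma_twist}, so membership in $\mathcal{T}_g$ is automatic and need not be checked separately.

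I expect the main obstacle to be step (2)–(3): choosing the right sequence of $T$-conjugates and products so that the disjoint-curve factors cancel cleanly and one is left with genuinely single relative twists rather than ever-longer products. This is the delicate bookkeeping that the proof of Theorem~\ref{theoremnon4} carries out for the two-factor element $u_{g-1}t_{\gamma_{10}}t_{c_2}^{-1}$; here the second generator carries three factors (odd case) or four (even case), so the orbit combinatorics is correspondingly more intricate, and the even case in particular requires juggling the extra factor $t_{d_{41}}^{-1}$ and the fixed crosscap of the even-genus rotation $T$. Once the individual Dehn twists about $a_1, a_2, b_i, c_i$ are in hand, the remaining steps are routine: $t_\epsilon$ and $t_{c_r}$ come from single $T$-conjugates, and the hypothesis $g\geq 7$ (well within our bounds) makes Theorem~\ref{theoremnon3}'s machinery applicable. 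I would organize the write-up as a table of $T$-orbits followed by the explicit cancellation chain, deferring the bulk of the elementary curve-tracking to \cite{altunoz-pamuk-yildiz2}.
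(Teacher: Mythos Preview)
The survey does not give its own proof of Theorem~\ref{theorem_twist_two}; it only states the result and cites \cite{altunoz-pamuk-yildiz2}. Your outline is the correct strategy and is exactly the one carried out in that reference --- it is the adaptation to $\mathcal{T}_g$ of the paper's visible proof of Theorem~\ref{theoremnon4}, so there is nothing further to compare against here.

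One point in step~(5) deserves sharpening. The curve $\epsilon$ is \emph{not} in the $T$-orbit of any of the chain curves $a_1, a_2, b_i, c_i$; that is precisely why the second generator carries the extra factor $t_{f_{18}}$ in the odd case (resp.\ $t_{f_{26}}t_{d_{41}}^{-1}$ in the even case). The $f$-curves furnish the $T$-orbit that reaches $\epsilon$, so the correct order of operations is to isolate $t_{f_{18}}$ (resp.\ $t_{f_{26}}$ and $t_{d_{41}}$) as standalone twists during the cancellation chain of steps~(2)--(4), and only then obtain $t_\epsilon$ as a $T$-conjugate of one of \emph{those}. In the even case the fixed crosscap of $T$ is the reason a fourth factor is needed: the $d$-curves reach the portion of the chain near the fixed crosscap that the other $T$-orbits miss, and this accounts for the larger genus bound $g\geq 50$. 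With that adjustment to the ordering, your plan matches \cite{altunoz-pamuk-yildiz2}.
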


For smaller genera ($g \geq 8$), the number of generators increases. The generating sets are as follows.
\begin{theorem}\label{theorem_twist_three}The twist subgroup $\mathcal{T}_g$ is generated by the following three elements:
  \begin{equation*}
 \begin{cases}
   T, t_{d_{g-1}}t_{a_2}^{-1}, t_{f_{1}}t_{b_2}^{-1} & \text {if } g=2r+2\geq 8,\\
        T, t_{a_{1}}t_{a_2}^{-1}, t_{f_{1}}t_{b_{2}}^{-1}
         & \text {if } g=2r+1\geq 9,
    \end{cases}       
\end{equation*} 
where the curves are as in Figures~\ref{NG} and ~\ref{generators_twist}.
\end{theorem}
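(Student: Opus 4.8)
The plan is to prove that the subgroup $G$ generated by the three listed elements contains Omori's generating set from Theorem~\ref{twistthm1}, i.e. the twists $t_{a_1}, t_{a_2}, t_{b_1},\dots, t_{b_r}, t_{c_1},\dots,t_{c_{r-1}}$ (together with $t_{c_r}$ when $g$ is even) and $t_\epsilon$. The reverse inclusion $G\subseteq\mathcal{T}_g$ is automatic, since the twist-difference elements are products of Dehn twists about two-sided curves and $T$ lies in $\mathcal{T}_g$ because $D(T)=1$ (Lemma~\ref{lemma_twist}). The overall architecture mirrors the proof of Theorem~\ref{theoremnon4}: first use $T$ to spread the given twist-difference elements over long orbits of curves, then invoke the lantern relation to pull an \emph{honest} single Dehn twist out of those twist-differences, and finally recover $t_\epsilon$.

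\emph{Step 1: propagation by $T$.} Record the action of $T$ on the relevant curves: $T(b_i)=c_i$ and $T(c_i)=b_{i+1}$ for $i<r$, $T(a_1)=b_1$, with the cycle closing up via $T^2(b_r)=a_1$ for $g=2r+1$ and via $T(b_r)=c_r$, $T^2(c_r)=a_1$ for $g=2r+2$; likewise for the auxiliary curves $f_i, d_i, \gamma_i, m_{i,j}, n_{i,j}$ of Figure~\ref{generators_twist}. Conjugation by $T^k$ carries a twist-difference $t_\alpha t_\beta^{-1}$ to $t_{T^k\alpha}t_{T^k\beta}^{-1}$ (up to simultaneously inverting, which stays in $G$) by the conjugation relation for Dehn twists. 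Applying this to the two given generators $t_{f_1}t_{b_2}^{-1}$ and $t_{a_1}t_{a_2}^{-1}$ (resp. $t_{d_{g-1}}t_{a_2}^{-1}$) and telescoping, using $(t_\alpha t_\beta^{-1})(t_\beta t_\gamma^{-1})=t_\alpha t_\gamma^{-1}$, shows that $G$ contains $t_\alpha t_\beta^{-1}$ for every pair $\alpha,\beta$ in the union of $T$-orbits of the curves that occur. In particular all differences $t_{b_i}t_{b_j}^{-1}$, $t_{b_i}t_{c_j}^{-1}$, $t_{c_i}t_{c_j}^{-1}$, $t_{a_2}t_{b_1}^{-1}$, and the corresponding differences involving the lantern curves $m_{i,j},n_{i,j}$, lie in $G$.

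\emph{Step 2: extracting one Dehn twist via a lantern.} Choose an embedded four-holed sphere in $N_g$ whose four boundary curves and three interior curves $m_{i,j},n_{i,j}$ all lie in the orbit-union controlled in Step~1 --- this is where the genus bounds $g\ge8$ (even) and $g\ge9$ (odd) are used, since one needs enough crosscaps to realise such a lantern. In the lantern relation $t_{\partial_1}t_{\partial_2}t_{\partial_3}t_{\partial_4}=t_{\delta_{1}}t_{\delta_{2}}t_{\delta_{3}}$ both sides are products of pairwise-commuting twists, so after absorbing the twist-differences of Step~1 the relation reduces to $t_{\partial_1}^{-4}t_{\delta_1}^{3}\in G$; combining this with $(t_{\partial_1}t_{\delta_1}^{-1})^{3}=t_{\partial_1}^{3}t_{\delta_1}^{-3}\in G$ (legitimate since $\partial_1$ and $\delta_1$ are disjoint) yields $t_{\partial_1}^{-1}\in G$, so an honest single twist lies in $G$. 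Propagating this twist by $T$ through Step~1 puts every twist in $\{t_{a_1},t_{a_2},t_{b_1},\dots,t_{b_r},t_{c_1},\dots,t_{c_{r-1}}\}$ (and $t_{c_r}$ for even $g$) into $G$; in the even-genus case one checks along the way that the generator $t_{d_{g-1}}t_{a_2}^{-1}$, together with these twists, also produces $t_{a_1}$, which the odd-genus argument gets directly from its generator.

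\emph{Step 3: recovering $t_\epsilon$.} The curve $\epsilon$ of Figure~\ref{generators_twist} lies in the subsurface cut off by the last crosscap(s), and $t_\epsilon$ can be written either as a conjugate $t_\epsilon=t_\alpha^{\,h}$ for a suitable $h\in G$ assembled from the twists found in Step~2 and a power of $T$, or directly from a chain relation among curves now known to be in $G$. Once $t_\epsilon\in G$, Theorem~\ref{twistthm1} gives $G=\mathcal{T}_g$. The main obstacle is Step~2: selecting a four-holed sphere whose seven curves all lie in the orbit-union of Step~1 so that the lantern relation can be solved for a single twist purely in terms of available twist-differences, and verifying the intersection data that makes the relation valid --- this is the only place requiring genuine geometric input beyond bookkeeping with the $T$-action, and it is exactly what pins down the lower bounds on $g$.
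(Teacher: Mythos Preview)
Your overall architecture is exactly the one the paper uses throughout (and the one in~\cite{altunoz-pamuk-yildiz2}, which this survey cites without reproducing the proof): propagate twist-differences along the $T$-orbit, feed a lantern relation to extract a single Dehn twist, then recover $t_\epsilon$ from the $f$-curve generator. So the plan is right. Two points need repair.

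\textbf{Step 2 is wrong as written.} The three interior curves $\delta_1,\delta_2,\delta_3$ of a lantern pairwise intersect in two points, so their Dehn twists do \emph{not} commute; your claim that ``both sides are products of pairwise-commuting twists'' is false, and the reduction to $t_{\partial_1}^{-4}t_{\delta_1}^{3}$ collapses. The correct extraction uses only that the \emph{boundary} twists are central in the lantern:
\[
t_{\partial_1}\;=\;t_{\delta_1}t_{\delta_2}t_{\delta_3}\,t_{\partial_2}^{-1}t_{\partial_3}^{-1}t_{\partial_4}^{-1}
\;=\;(t_{\delta_1}t_{\partial_2}^{-1})(t_{\delta_2}t_{\partial_3}^{-1})(t_{\delta_3}t_{\partial_4}^{-1}),
\]
so what you actually need in $G$ are the three differences $t_{\delta_i}t_{\partial_{i+1}}^{-1}$, not differences among the $\delta_i$. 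This is precisely why the lantern has to be chosen so that each pair $(\delta_i,\partial_{i+1})$ lies in the orbit-union you built in Step~1, and is what really forces the genus bounds.

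\textbf{Step 3 is a real step, not a formality.} The curve $\epsilon$ is not in the $T$-orbit of $a_1$; it passes around the last crosscap(s). The way this is actually done is: once the $a_i,b_i,c_i$ twists are in $G$, the generator $t_{f_1}t_{b_2}^{-1}$ gives $t_{f_1}\in G$, and then one exhibits an explicit product of twists already in $G$ carrying a known curve onto $\epsilon$ (equivalently, a chain/lantern-type identity producing $t_\epsilon$ from the $a,b,c,f$ twists). Saying ``either a conjugate or a chain relation'' hides exactly the geometric check that makes the theorem true; you should spell it out.
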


 \subsection{Torsion generators for the twist subgroup}
 For orientable surfaces, generating sets of the mapping class group by torsion elements have been extensively studied. Analogously, a natural question is whether the twist subgroup $\mathcal{T}_g$ can be generated by torsion elements. This problem has seen significant recent advances. We review known results across different genera, focusing on minimal generator numbers and their orders.

The twist subgroup $\mathcal{T}_2$ is isomorphic to $\mathbb{Z}_2$~\cite{lickorishnon2}. The group $H_1(\mathcal{T}_4) \cong \mathbb{Z}_2\times \mathbb{Z}$, so ${\mathcal{T}_4}$ is not generated by torsion elements. For $g=4k+1 \geq 9$, Du~\cite{du} gave a generating set for $\mathcal{T}_g$ consisting of three torsion elements: two involutions and one element of order $2g$. Later, the authors~\cite{altunoz-pamuk-yildiz3} showed that when $g\geq13$, $\mathcal{T}_g$ can be generated by two involutions together with a single additional element whose order is equal to the genus $g$ when odd and $g-1$ when even. More recently, Leśniak and Szepietowski~\cite{lesniak-szepietowski} showed that for $g\neq 4$, $\mathcal{T}_g$ is generated by three torsion elements. We will discuss this set of generators below. 

We recall the definitions of the following elements from Section~\ref{torsionnonorientable}.
\begin{itemize}
    \item $s = 
    \begin{cases}
        t_{a_1}t_{b_1}t_{c_1}t_{b_2}t_{c_2} \cdots t_{b_r}t_{c_r} & \text{if } g = 2r + 2, \\
        t_{a_1}t_{b_1}t_{c_1}t_{b_2}t_{c_2} \cdots t_{c_{r-1}}t_{b_r} & \text{if } g = 2r + 1.
    \end{cases}$
    
    \item $s' = 
    \begin{cases}
        t_{a_1}^2t_{b_1}t_{c_1}t_{b_2}t_{c_2}\cdots t_{b_r}t_{c_r} & \text{if } g=2r+2, \\
        t_{a_1}t_{b_1}t_{c_1}t_{b_2}t_{c_2}\cdots t_{c_{r-1}}t_{b_r} & \text{if } g=2r+1.
 \end{cases}$
    
    \item $y = 
    \begin{cases}
        t_{b_r}u_{g-1} & \text{if } g = 2r + 1, \\
        t_{c_r}u_{g-1} & \text{if } g = 2r + 2.
    \end{cases}$
\end{itemize}

Recall also that the orders of $s$ and $s'$ depend on the parity of $g$: when $g$ is even, $s$ has order $g$ and $s'$ has order $g-1$, while for odd $g$, their orders are $2g$ and $2(g-1)$, respectively. By definition, $s$ and $s'$ are contained in the twist subgroup $\mathcal{T}_g$. For $g=3$, the twist subgroup is generated by the Dehn twists $t_{a_1}$ and $t_{b_1}$. It is easy to see that it can also be generated by $t_{a_1}t_{b_1}$ and $t_{a_1}^2t_{b_1}$, whose orders are six and four, respectively. By adapting the arguments used in the proof of Theorem~\ref{torsionnonthm} and substituting the elements $t$ and $t'$ with $s$, one can deduce that the twist subgroup $\mathcal{T}_5$ is generated by the torsion elements $s$, $st_{a_2}$ and $y^{-1}sy$. For $g\geq6$, $\mathcal{T}_g$ is generated by the torsion elements $s$, $s'$ and $xsx^{-1}$. Thus, we can state the following theorem: 

\begin{theorem}
    For $g\neq 4$, the twist subgroup $\mathcal{T}_g$ is generated by three torsion elements.
\end{theorem}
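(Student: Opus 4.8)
The plan is to reduce the statement for the twist subgroup $\mathcal{T}_g$ to the already-established generation of the full mapping class group $\Mod(N_g)$ by three torsions (Theorem~\ref{torsionnonthm}), by carefully choosing torsion elements that \emph{all lie inside} $\mathcal{T}_g$. The key observation is that in the proof of Theorem~\ref{torsionnonthm} the role of the ``crosscap'' ingredients $t$ and $t'$ (which need \emph{not} lie in $\mathcal{T}_g$, since $u_i \notin \mathcal{T}_g$) was only to supply a convenient element conjugating certain curves; all of the genuine work was extracting Dehn twists $t_{a_1}, t_{a_2}, t_{b_i}, t_{c_i}$ via the action of $s$ (resp.\ $s'$) on the curve system $\{a_1, b_i, c_i\}$, which by Theorem~\ref{twistthm1} (together with $t_\epsilon$, or equivalently the remaining Dehn twists) generate $\mathcal{T}_g$. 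So the first step is to verify, using Lemma~\ref{lemma_twist}, that $s$, $s'$, the rotation-type elements $s$ in place of $t, t'$, and the composite elements $st_{a_2}$, $y^{-1}sy$, $xsx^{-1}$ all have determinant $D = 1$ and hence lie in $\mathcal{T}_g$; the Dehn twists obviously do, and $s, s'$ are products of Dehn twists by definition, so the only points needing a remark are the conjugating elements and the element $y = t_{b_r}u_{g-1}$ (or $t_{c_r}u_{g-1}$)—since $D$ is a homomorphism and $D(u_{g-1}) = -1$, one checks $D(y) = -1$, but $y$ only ever appears conjugating within a determinant-$1$ combination such as $y^{-1}sy$, which is fine.

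Second, for $g = 5$: following the proof of Theorem~\ref{torsionnonthm} verbatim but with $t'$ replaced by $s$, one sets $G = \langle s, st_{a_2}, y^{-1}sy\rangle$, obtains $t_{a_2} \in G$, then $t_{b_2} = t_{a_2}^{(st_{a_2})^{-1}} \in G$, and conjugating by $s$ (using $s(b_2) = c_2$, $s(\text{appropriate curve}) = a_1, b_1, c_1$, etc., exactly as $s$ maps $\{a_1, b_i, c_i\}$ to $\{b_1, c_i, b_{i+1}\}$) recovers $t_{a_1}, t_{b_1}, t_{c_1}$, and then all of Omori's generators for $\mathcal{T}_5$. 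Third, for $g \geq 6$ even, $g = 2r+2$: set $G = \langle s, s', xsx^{-1}\rangle$; since $t_{a_1} = s's^{-1} \in G$, the $s$-action gives all $t_{a_1}, t_{b_i}, t_{c_i}$, and the element $xs^2x^{-1}$ (mimicking $xt^2x^{-1}$ from the original proof, valid because $x(b_2) = a_2$, $x(b_1) = c_1$ and $s$ shifts indices the same way $t$ did) yields $t_{a_2} = t_{c_1}^{(xs^2x)^{-1}} \in G$, hence all of Theorem~\ref{twistthm1}'s generating set lies in $G$. The odd case $g = 2r+1 \geq 7$ is identical with $s'$ of order $2(g-1)$ playing its role, and $g = 3$ was handled above by $t_{a_1}t_{b_1}$ and $t_{a_1}^2 t_{b_1}$. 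Finally, one notes $g = 4$ is excluded because $H_1(\mathcal{T}_4) \cong \mathbb{Z}_2 \times \mathbb{Z}$ is not a torsion group.

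The main obstacle — and really the only subtlety beyond bookkeeping — is confirming that replacing $t, t'$ by $s$ (respectively $s, s'$) does not break the conjugation identities used in the original argument: one must check that $s$ (or $s^2$, $s'$) acts on the relevant curves ($a_1, a_2, b_i, c_i$, and whatever intermediate curves appear in the definition of $x$) in exactly the way needed so that each ``$t_{\beta}^{(\cdot)} \in G$'' step still produces a \emph{Dehn twist} (not a crosscap slide or transposition). Since $s$ is itself a product of Dehn twists with the same curve-shifting behavior on $\{a_1, b_i, c_i\}$ as $t$, and is orientation-compatible in the sense that $D(s) = 1$, these identities go through; the verification is routine but is where all the geometric content sits. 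Once these checks are in place the theorem follows, as Leśniak and Szepietowski observe, by the same scheme as Theorem~\ref{torsionnonthm}, and we refer to~\cite{lesniak-szepietowski} for the full curve-by-curve computations.
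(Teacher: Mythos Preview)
Your proposal is correct and follows essentially the same approach as the paper: the paper's argument is precisely to adapt the proof of Theorem~\ref{torsionnonthm} by substituting $s$ for both $t$ and $t'$, obtaining the generating sets $\{s, st_{a_2}, y^{-1}sy\}$ for $g=5$ and $\{s, s', xsx^{-1}\}$ for $g\geq 6$, with $g=3$ handled by $t_{a_1}t_{b_1}$ and $t_{a_1}^2 t_{b_1}$. Your additional remarks on verifying $D=1$ for the conjugated elements (using that $\mathcal{T}_g$ is normal, or equivalently that $D$ is a homomorphism) and on why $s$ has the same curve-shifting behaviour as $t$ on $\{a_1, b_i, c_i\}$ make explicit exactly the checks the paper leaves implicit.
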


For sufficiently large genus $g$, the twist subgroup $\mathcal{T}_g$ is generated by three conjugate torsion elements. The proof adapts Lanier's methods~\cite{lanier2018} and parallels Theorem~\ref{lesniak-szepietowski-non} (for a complete proof, see~\cite{lesniak-szepietowski}).
\begin{theorem}\label{lesniak-szepietowski-twist}
Let $k\geq 12$. For a nonorientable surface of genus $g=pk+2q(k-1)$ or $g=pk+2q(k-1)+1$ with $q\in \mathbb Z_{> 0}$, $\mathcal{T}_g$ is generated by three conjugate elements of order $k$ if either $k$ is odd or $p$ is even.
\end{theorem}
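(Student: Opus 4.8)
The plan is to run the argument of Theorem~\ref{lesniak-szepietowski-non} essentially verbatim, the one genuinely new ingredient being a determinant computation that forces the relevant periodic element into $\mathcal{T}_g$ rather than into all of $\Mod(N_g)$. First I would fix a model of $N_g$ carrying an order-$k$ rotation $f$. The hypothesis $g=pk+2q(k-1)$ or $g=pk+2q(k-1)+1$ is precisely the numerical (Riemann--Hurwitz-type) condition for such a rotation to exist, with $q>0$ ensuring the order is exactly $k$ and $k\ge 12$ leaving enough room on the surface for the geometric construction in the last step; here $p$ counts the cyclically permuted blocks of crosscaps and $q$ the branch data of the quotient orbifold, with the optional $+1$ distinguishing the parity of $g$. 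Since $g\ge pk+2q(k-1)\ge 34$, all the structural facts quoted for $g\ge 7$ (Omori's generators, the normal-closure dichotomy of~\cite{lesniak}) are available.

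Next I would compute $D(f)$ via Lemma~\ref{lemma_twist}: the induced map $f_*$ on $H_1(N_g;\mathbb{R})$ is, in the natural basis of one-sided curves adapted to the rotation, a signed block-permutation matrix assembled from the $p$ cyclically permuted blocks together with the fixed data, and a short determinant calculation shows $\det(f_*)=1$ exactly when $k$ is odd or $p$ is even. Thus, under the hypothesis of the theorem, $f\in\mathcal{T}_g$, so any subgroup generated by conjugates of $f$ lies in $\mathcal{T}_g$ --- this is exactly where the statement diverges from Theorem~\ref{lesniak-szepietowski-non}, in which $k$ even and $p$ odd give $D(f)=-1$ and hence the whole group. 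As a consistency check, the dichotomy of~\cite{lesniak} then identifies the normal closure of $f$ in $\Mod(N_g)$ with $\mathcal{T}_g$, confirming $\mathcal{T}_g$ is the correct target (though this alone does not bound the number of conjugates needed).

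It then remains to show that three conjugates of $f$ already generate $\mathcal{T}_g$. Following Lanier~\cite{lanier2018}, I would choose conjugators $h_1,h_2$ so that the three elements $f$, $f_1=h_1fh_1^{-1}$, $f_2=h_2fh_2^{-1}$ are positioned so that a short word in two of them is supported on an annular neighbourhood of a two-sided curve $c$ and evaluates to $t_c^{\pm1}$ --- near $c$ the configuration reduces to the orientable local model in which two partial rotations about the two ends of a cylinder differ by a Dehn twist. Setting $G=\langle f,f_1,f_2\rangle$, this produces a bounded family of explicit Dehn twists $t_{c_1},\dots,t_{c_m}\in G$ about two-sided curves; conjugating these by the powers $f^j$ and using that $f$ cyclically permutes a chain of two-sided curves, one checks that the resulting family contains a configuration equivalent to Omori's generating set of Theorem~\ref{twistthm1}. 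The chain and lantern relations, applied exactly as in the proof of Theorem~\ref{theoremnon4}, then deliver all of Omori's generators, so $\mathcal{T}_g\subseteq G$; combined with the previous paragraph, $G=\mathcal{T}_g$.

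The main obstacle is this last step: exhibiting the conjugators $h_1,h_2$ and the precise word that realizes an honest Dehn twist --- not merely an element of its normal closure --- requires careful control of the supports of the periodic maps and of the $f$-orbits of the relevant curves, and everything must be organized uniformly across the four cases ($g$ even or odd, against the roles of $p$ and $q$). By contrast, the determinant bookkeeping and the final ``spreading'' of a single twist into Omori's full generating set are routine, the latter being virtually identical to manipulations already carried out in Theorem~\ref{theoremnon4} and in Theorem~\ref{lesniak-szepietowski-non}.
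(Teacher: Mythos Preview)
Your proposal is correct and matches the approach the paper indicates: the survey does not give a self-contained proof of this theorem but states that it ``adapts Lanier's methods~\cite{lanier2018} and parallels Theorem~\ref{lesniak-szepietowski-non},'' referring to~\cite{lesniak-szepietowski} for details, which is precisely the structure you outline. Your identification of the determinant computation $D(f)=(-1)^{p(k-1)}$ as the one new ingredient --- explaining exactly why the hypothesis ``$k$ odd or $p$ even'' forces the order-$k$ rotation into $\mathcal{T}_g$ rather than its complement --- is the right point of departure from the $\Mod(N_g)$ case, and the rest of your plan (Lanier-style conjugators producing an explicit Dehn twist, then spreading via the $f$-action to a full generating set) is indeed the content of the cited argument.
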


 \subsection{Involution generators for the twist subgroup}
For orientable surfaces, the mapping class group has been extensively studied, yielding deep insights into its involution-based generating sets. It is known that the mapping class group of an orientable surface can be generated by only three involutions—the minimal number possible for such a generating set—provided the genus is six or larger~\cite{korkmaz2020,yildiz2020}. In contrast, less is known about the mapping class groups of nonorientable surfaces, particularly regarding generating sets with specific properties. In this section, we focus on generating the twist subgroup $\mathcal{T}_g$ by only involutions. It is known that $\mathcal{T}_g$ cannot be generated by two involutions (as any such group is a quotient of a dihedral group), so it is required that any generating set consisting of only involutions must contain at least three such elements. Yoshihara~\cite{yoshihara1} first studied generating $\mathcal{T}_g$ using only involutions. His results demonstrated that $\mathcal{T}_g$ can be generated by six involutions when $g\geq 14$, while for surfaces with $g\geq 8$, eight involutions suffice to generate the group. Later, the authors~\cite{altunoz-pamuk-yildiz4} improved Yoshihara's result by presenting generating sets for $\mathcal{T}_g$ that require a smaller number of involutions. First, we state Yoshihara's generators.
\begin{figure}[h]
\begin{center}
\scalebox{0.33}{\includegraphics{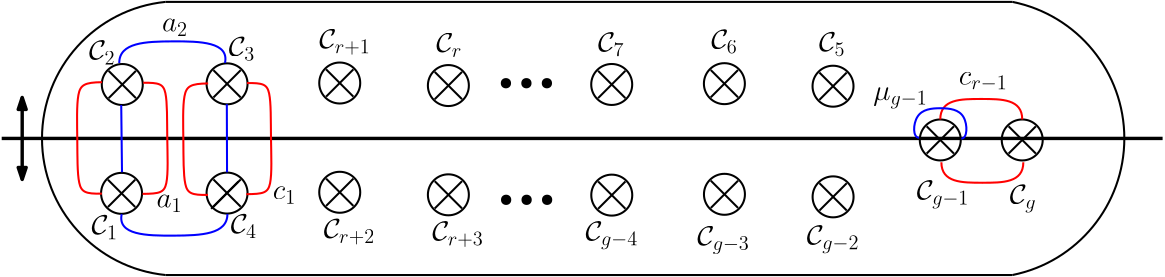}}
\caption{The reflection $\sigma$ for $g=2r$.}
\label{sigmayoshihara}
\end{center}
\end{figure}
\begin{figure}[h]
\begin{center}
\scalebox{0.35}{\includegraphics{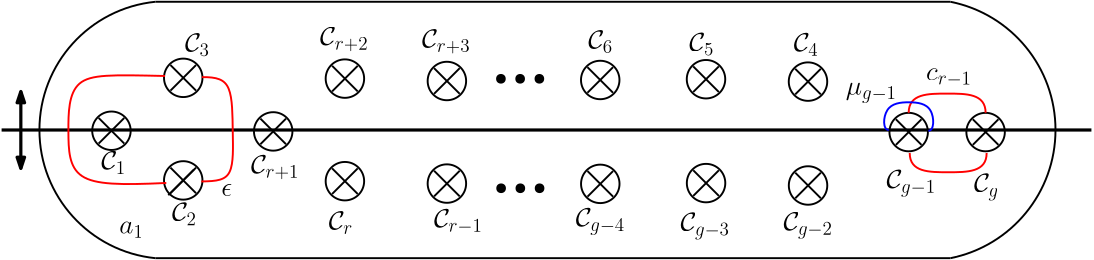}}
\caption{The reflection $\tau$ for $g=2r$.}
\label{tauyoshihara}
\end{center}
\end{figure}
\begin{figure}[h]
\begin{center}
\scalebox{0.45}{\includegraphics{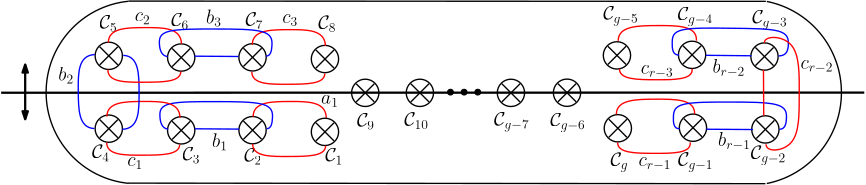}}
\caption{The reflection $\upsilon$ for $g=2r$.}
\label{vyoshihara}
\end{center}
\end{figure}
Let $g=2r\geq 14$. We work with the models for $N_g$ shown in Figures~\ref{sigmayoshihara}, ~\ref{tauyoshihara} and ~\ref{vyoshihara}. It is known that homeomorphisms exist between these models. Note that the surface $N_g$ is invariant under the involutions $\sigma$, $\tau$ and $\upsilon$, which are reflections across the corresponding planes as shown in Figures~\ref{sigmayoshihara}, ~\ref{tauyoshihara} and ~\ref{vyoshihara}. For $g=2r+1\geq 15$, the crosscaps can be distributed similarly to the case of $g=2r\geq 14$ in order to define the reflections $\sigma$, $\tau$ and $\upsilon$ (see~\cite[Figures 11, 12 and 13]{yoshihara1}). By an easy calculation, the reflection $\sigma$ satisfies: 
\begin{itemize}
    \item $D(\sigma)=-1$ if r is odd,
     \item $D(\sigma)=1$ if r is even,
     \item $D(\sigma)=-1$ if g is odd.
\end{itemize}
To ensure membership in $\mathcal{T}_g$ (where $D(\sigma)=1$), we compose the reflection $\sigma$ with the crosscap slide $Y_{\mu_{g-1},c_{r-1}}$, which satisfies $D(Y_{\mu_{g-1},c_{r-1}})=-1$. In this case, abusing notation, we keep writing $\sigma$ instead of $\sigma Y_{\mu_{g-1},c_{r-1}}$. Similarly the reflection $\tau$ satisfies:  
\begin{itemize}
    \item $D(\tau)=1$ if r is odd,
     \item $D(\tau)=-1$ if r is even,
     \item $D(\tau)=-1$ if g is even.
\end{itemize}
When $\tau$ fails to lie in  $\mathcal{T}_g$, we adjust it by composing it with the crosscap slide $Y_{\mu_{g-1},c_{r-1}}$ so that it falls into $\mathcal{T}_g$. We continue to write $\tau$ for the composed mapping class. The third reflection $\upsilon$ satisfies $D(\upsilon)=1$, which means that $\upsilon \in \mathcal{T}_g$. We define three additional involutions:
\[
\rho_1 = \sigma t_{a_1}, \rho_2 = \sigma t_{b} \textrm{ and }
\rho_3 = \tau t_{\epsilon}.
\]
It can be verified that $\rho_1$, $\rho_2$ and $\rho_3$ are involution elements in $\mathcal{T}_g$. These elements are used to obtain Omori's generators $t_{a_1}$, $t_{b}$, and $t_{\epsilon}$ (Theorem~\ref{twistthm1}). Let $G = \langle \sigma, \tau, v, \rho_1, \rho_2, \rho_3 \rangle$. All Dehn twists in Omori's generating set (Theorem~\ref{twistthm1}) lie in $G$: 
\begin{itemize}
    \item $t_{a_1} = \sigma\rho_1\in G$,
    \item $t_b = \sigma\rho_2\in G$ and
    \item $t_{\epsilon} = \tau\rho_3\in G$.
\end{itemize}
The remaining Dehn twist generators are obtained through the action of $\sigma, \tau, v$ on $a_1$. Thus the subgroup $G$ contains all required generators of the twist subgroup $\mathcal{T}_g$. For other Dehn twist generators, the action of $\sigma, \tau, v$ maps $a_1$ to other Dehn twist generators. Thus $G = \mathcal{T}_g$. We state Yoshihara's result:
\begin{theorem}~\cite{yoshihara1}
  For $g\geq 16$ and $g=14$, the twist subgroup $\mathcal{T}_g$ can be generated by six involutions.
\end{theorem}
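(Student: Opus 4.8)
The plan is to prove $\mathcal{T}_g = G$, where $G=\langle \sigma,\tau,\upsilon,\rho_1,\rho_2,\rho_3\rangle$, by verifying that $G$ contains Omori's $(g+1)$ Dehn twist generators from Theorem~\ref{twistthm1}; once this is done, $G=\mathcal{T}_g$ is immediate. All of the geometry is therefore funneled into a single task: recover each curve in Omori's generating system inside $G$.

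First I would fix symmetric models of $N_g$ in which $\sigma$, $\tau$, $\upsilon$ are honest homeomorphisms — the crosscaps distributed evenly for $g=2r$, with the small modification for $g=2r+1$ indicated in Yoshihara's figures — so that each is patently an involution. Next I would compute the determinant homomorphism $D$ on each reflection from its action on the one-sided homology basis $\{x_i\}$ of Figure~\ref{homology_non}; this yields the stated parity-dependent values. Whenever $D(\sigma)=-1$ (respectively $D(\tau)=-1$) I would replace $\sigma$ (respectively $\tau$) by its composite with the crosscap slide $Y_{\mu_{g-1},c_{r-1}}$, which satisfies $D=-1$, so that the product lies in $\mathcal{T}_g$ by Lemma~\ref{lemma_twist}. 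A point requiring care: the supporting one-holed Klein bottle of $Y_{\mu_{g-1},c_{r-1}}$ must be chosen invariant under the reflection in such a way that the reflection conjugates the crosscap slide to its inverse, so that the adjusted $\sigma$ (respectively $\tau$) remains an involution — and, for the step below, so that this correction term has support disjoint from $a_1$.

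Then I would introduce $\rho_1=\sigma t_{a_1}$, $\rho_2=\sigma t_b$, $\rho_3=\tau t_\epsilon$ and check each is an involution. This uses that $\sigma$ fixes $a_1$ and $b$ but reverses local orientation along them, so $\sigma t_{a_1}\sigma=t_{a_1}^{-1}$ and hence $(\sigma t_{a_1})^2=(\sigma t_{a_1}\sigma)t_{a_1}=t_{a_1}^{-1}t_{a_1}=1$, and likewise for $\rho_2$, and for $\rho_3$ using that $\tau$ fixes $\epsilon$ orientation-reversingly (with the crosscap-slide correction arranged to commute appropriately). From these relations we read off directly that $t_{a_1}=\sigma\rho_1$, $t_b=\sigma\rho_2$, and $t_\epsilon=\tau\rho_3$ all belong to $G$.

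The remaining step — and the main obstacle — is to produce every other Dehn twist in Omori's list, namely $t_{a_2}$, $t_{b_1},\dots,t_{b_r}$, $t_{c_1},\dots,t_{c_{r-1}}$ (and $t_{c_r}$ when $g$ is even), by conjugating $t_{a_1}\in G$ by suitable words in $\sigma$, $\tau$, $\upsilon$. This forces the three reflections to be positioned so that the $\langle\sigma,\tau,\upsilon\rangle$-orbit of the single curve $a_1$, acting on isotopy classes, sweeps out the whole Omori curve system; arranging this with only three reflections is the delicate part, carried out by direct inspection of Figures~\ref{sigmayoshihara}--\ref{vyoshihara}, and it is presumably also the origin of the hypothesis $g\geq 16$ or $g=14$ (for the intermediate small genera the symmetric models cannot accommodate all the required curves in a single reflection orbit, or the determinant corrections obstruct the construction). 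Once every Omori generator has been exhibited in $G$, Theorem~\ref{twistthm1} gives $G=\mathcal{T}_g$. I expect the careful verification of the precise images of $a_1$ under $\sigma$, $\tau$, $\upsilon$ — hence the bookkeeping that no generator is missed — to be the most technical portion of the argument.
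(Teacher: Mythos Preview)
Your proposal is correct and follows essentially the same approach as the paper: define $G=\langle\sigma,\tau,\upsilon,\rho_1,\rho_2,\rho_3\rangle$, adjust $\sigma$ and $\tau$ by the crosscap slide $Y_{\mu_{g-1},c_{r-1}}$ when their determinant is $-1$, extract $t_{a_1}=\sigma\rho_1$, $t_b=\sigma\rho_2$, $t_\epsilon=\tau\rho_3$, and then recover the remaining Omori generators by letting $\sigma,\tau,\upsilon$ act on $a_1$. Your additional remarks about arranging the crosscap-slide correction so that the adjusted reflection remains an involution and has support disjoint from $a_1$ are exactly the points one must verify, and the paper handles them implicitly in the same way.
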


For $8 \leq g \leq 16$, we replace $\upsilon$ with three additional involutions (see for details~\cite[Theorem 4.1]{yoshihara1}) (constructed similarly via reflections and crosscap slides), yielding a generating set consisting of eight involutions.
\begin{theorem}~\cite{yoshihara1}
  For $g\geq 8$, the twist subgroup $\mathcal{T}_g$ can be generated by eight involutions.
\end{theorem}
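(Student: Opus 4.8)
The plan is to prove the $g \geq 8$ case by reducing it to the $g \geq 14$ result together with an enlargement of the reflection-based generating set in low genus. Concretely, I would keep the three reflections $\sigma, \tau, \upsilon$ (each adjusted by a crosscap slide, if necessary, to lie in $\mathcal{T}_g$) and the three composite involutions $\rho_1 = \sigma t_{a_1}$, $\rho_2 = \sigma t_b$, $\rho_3 = \tau t_\epsilon$ that already yield Omori's twists $t_{a_1} = \sigma\rho_1$, $t_b = \sigma\rho_2$, $t_\epsilon = \tau\rho_3$. When $8 \leq g \leq 16$ (more precisely, in the range where $\upsilon$ alone does not sweep $a_1$ to all of Omori's curves), the single reflection $\upsilon$ should be replaced by three further reflections $\upsilon_1, \upsilon_2, \upsilon_3$ across suitable planes, again composed with crosscap slides to correct their determinants so each lands in $\mathcal{T}_g$ by Lemma~\ref{lemma_twist}. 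This gives $3 + 3 + 3 = 9$; but as in Yoshihara's argument one of the original reflections (say $\sigma$ or $\upsilon_3$) can be absorbed or shown redundant once the images of $a_1$ under the enlarged reflection group already cover all of Omori's generating curves, bringing the count down to the claimed eight.

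The key steps, in order, are: (1) fix the models of $N_g$ from Figures~\ref{sigmayoshihara}--\ref{vyoshihara} and verify the homeomorphisms between them, so that all of $\sigma, \tau, \upsilon_i, \epsilon, a_1, b$ live on a common surface; (2) compute $D(\sigma), D(\tau), D(\upsilon_i)$ using the homology basis of Figure~\ref{homology_non}, and in each case where the determinant is $-1$, precompose with the crosscap slide $Y_{\mu_{g-1}, c_{r-1}}$ (which has determinant $-1$), retaining the same names by abuse of notation — exactly as done for $\sigma$ and $\tau$ above; (3) check that $\rho_1, \rho_2, \rho_3$ are genuine involutions in $\mathcal{T}_g$, using the general fact (stated in the excerpt) that $\rho x y^{-1}$ is an involution whenever $\rho x \rho = y$ for an involution $\rho$ — here $\sigma t_{a_1} \sigma = t_{a_1}$ or $t_{a_1}^{-1}$ according to whether $\sigma$ reverses local orientation at $a_1$, and similarly for the others; (4) recover $t_{a_1}, t_b, t_\epsilon$ as the products $\sigma\rho_1$, $\sigma\rho_2$, $\tau\rho_3$; (5) show that conjugating $a_1$ (and hence $t_{a_1}$) by words in $\sigma, \tau, \upsilon_1, \upsilon_2, \upsilon_3$ produces every curve in Omori's list from Theorem~\ref{twistthm1}, so that all of Omori's Dehn twists lie in the subgroup $G$ generated by the eight involutions, hence $G = \mathcal{T}_g$; (6) finally, argue that one involution is redundant so that eight suffice.

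The main obstacle is step (5): exhibiting reflections whose combined action on the single curve $a_1$ — under conjugation by arbitrary words, not just single reflections — reaches all of $t_{a_1}, t_{a_2}, t_{b_1}, \dots, t_{b_r}, t_{c_1}, \dots, t_{c_{r-1}}$ (and $t_{c_r}$ when $g$ is even). In the large-genus regime $\upsilon$ has enough "room" (enough crosscaps on one side) to do this by itself, but for $8 \leq g \leq 16$ the crosscaps are too few to separate the required orbit with one reflection, which is precisely why three replacement reflections are needed; choosing their planes so that the orbit of $a_1$ is exactly Omori's curve system, while keeping each $\upsilon_i$ an honest involution of the (adjusted) surface lying in $\mathcal{T}_g$, is the delicate geometric bookkeeping. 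A secondary difficulty is tracking determinants through the crosscap-slide corrections so that all eight chosen elements simultaneously satisfy $D = 1$; this is mechanical via Lemma~\ref{lemma_twist} but must be done case-by-case in the parities of $r$ and $g$. For the full verification of the intermediate curves and the redundancy argument, I would refer to \cite[Theorem 4.1]{yoshihara1}.
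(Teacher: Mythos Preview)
Your approach matches the paper's (very brief) sketch: retain $\sigma,\tau,\rho_1,\rho_2,\rho_3$ and, in the low-genus range, replace the single reflection $\upsilon$ by three involutions built from reflections and crosscap slides. The paper gives no more detail than this and defers entirely to \cite[Theorem~4.1]{yoshihara1}.

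There is one genuine slip in your write-up: the arithmetic in your main paragraph does not match the word ``replaced''. If $\upsilon$ is \emph{replaced} by $\upsilon_1,\upsilon_2,\upsilon_3$, the resulting set is $\{\sigma,\tau\}\cup\{\upsilon_1,\upsilon_2,\upsilon_3\}\cup\{\rho_1,\rho_2,\rho_3\}$, which already has $2+3+3=8$ elements, not $9$. Your step~(6) --- arguing that one involution is redundant to bring the count from nine down to eight --- is therefore unnecessary, and no such redundancy argument appears in the paper or in Yoshihara's original. Delete step~(6) and correct the count, and the proposal aligns exactly with the paper's treatment.
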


We now state the involution generators for the twist subgroup $\mathcal{T}_g$ given by Altunöz, Pamuk and Yildiz~\cite{altunoz-pamuk-yildiz4}. For $g=2r+2$, we consider the models in Figures~\ref{involutiontwisteven_tay} and~\ref{sigma_tay}.
\begin{figure}[h]
\begin{center}
\scalebox{0.45}{\includegraphics{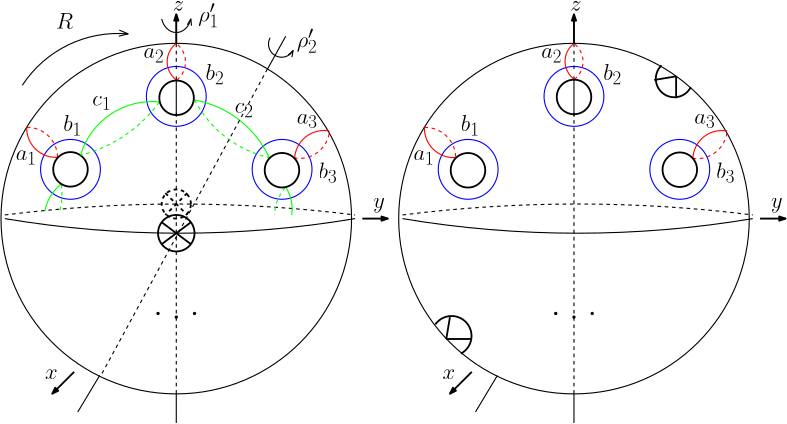}}
\caption{The models for $N_g$ if $g=2r+2$.}
\label{involutiontwisteven_tay}
\end{center}
\end{figure} 
\begin{figure}[h]
\begin{center}
\scalebox{0.35}{\includegraphics{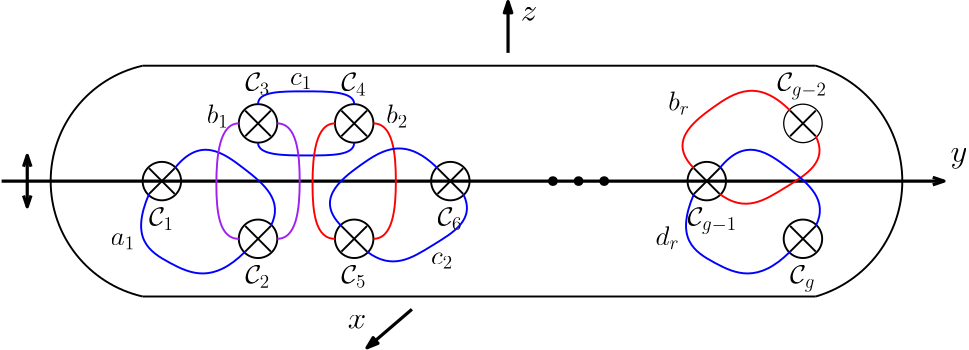}}
\caption{The involution $\sigma$ for $g=2r+2$.}
\label{sigma_tay}
\end{center}
\end{figure}For $g=2r+2$, we consider the models in Figures~\ref{involutiontwisteven_tay} and~\ref{sigma_tay}. In the models shown in Figure~\ref{involutiontwisteven_tay}, we embed a genus $r$ orientable surface with two disks removed in $\mathbb{R}^{3}$, arranging each genus in a circular position with the second genus on the $+z$-axis. The rotation by
$\frac{2\pi}{r}$ about the $x$-axis cyclically permutes the curves, mapping $b_i$ to $b_{i+1}$ for $i=1,...,r-1$ and sending $b_r$ back to $b_1$. The right-hand model results from applying a crosscap-sliding diffeomorphism, say $\phi$, to the left-hand model. Let $\tau^{\prime}$ denote the blackboard reflection of $N_g$ for the left model in Figure~\ref{involutiontwisteven_tay}. Note that $D(\tau^{\prime})=-1$ when $r$ is even. For odd $r$, we use the reflection $\tau^{\prime}$, while for even $r$ we use the conjugate reflection $\phi\tau^{\prime}\phi^{-1}$. The surface $N_g$ remains invariant under both reflections. Abusing notation, we will continue to write $\tau^{\prime}$ instead of $\phi\tau^{\prime}\phi^{-1}$ when $r$ is even. Now define two involutions $\rho_1$ and $\rho_2$: consider $\rho_{1}^{\prime}$ ($\pi$-rotation about the $z$-axis) and $\rho_{2}^{\prime}$ ($\pi$-rotation about the line $z = \tan(\pi/r)y$, $x=0$), as shown in Figure~\ref{involutiontwisteven_tay}. Both satisfy $D(\rho_{1}^{\prime}) = D(\rho_{2}^{\prime}) = -1$, which implies that they are not in $\mathcal{T}_g$. We define $\rho_1=\rho_{1}^{\prime}\tau$ and $\rho_2=\rho_{2}^{\prime}\tau$. The resulting elements $\rho_{1}$ and $\rho_{2}$ are involutions that lie in the twist subgroup $\mathcal{T}_g$. Observe that the rotation $R$ is equal to $\rho_1\rho_2$.

For the last involution element in $\mathcal{T}_g$, we consider $N_g$ where $g$ crosscaps are 
distributed as in Figure~\ref{sigma_tay}. For $g=2r+2$ and $r\geq3$, there is a 
reflection, $\sigma$, of the surface $N_g$ shown in Figure~\ref{sigma_tay} in the $xy$ plane. The involution $\sigma$ is in $\mathcal{T}_g$ if $g$ is even since it satisfies $D(\sigma)=1$.
\begin{theorem}\label{theorem_involution_twist_even_tay}
 The twist subgroup $\mathcal{T}_g$ can be generated by the following involution elements:
\begin{equation*}
 \begin{cases}
      \rho_1, \rho_2, \rho_1 t_{a_2} t_{c_{\frac{r}{2}}}t_{b_{\frac{r+4}{2}}}t_{c_{\frac{r+6}{2}}}, \sigma & \text {if } g=2r+2, r\geq 6 \text{ and even,}\\
        \rho_1, \rho_2, \rho_1 t_{a_1} t_{b_2}t_{c_{\frac{r+3}{2}}}t_{a_3}, \sigma  & \text {if } g=2r+2, r\geq 7 \text{ and odd,}\\
        \rho_1, \rho_2, \rho_1 t_{a_1} t_{b_2}t_{c_{4}}t_{a_3}, \sigma  & \text {if } g=12.
 \end{cases}       
\end{equation*}   
\end{theorem}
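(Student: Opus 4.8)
The plan is to prove that the subgroup $G$ generated by the four listed involutions coincides with $\mathcal{T}_g$. One inclusion is free: each of $\rho_1$, $\rho_2$, $\sigma$ was arranged to satisfy $D(\cdot)=1$ and the twist factors in $\rho_1 t_{a_2}t_{c_{r/2}}t_{b_{(r+4)/2}}t_{c_{(r+6)/2}}$ are Dehn twists, so $G\subseteq\mathcal{T}_g$. For the reverse inclusion it suffices to show that $G$ contains Omori's generating set from Theorem~\ref{twistthm1}. Two elements are immediately available: the rotation $R=\rho_1\rho_2$ of order $r$, and, multiplying the third generator on the left by $\rho_1$, the product of pairwise commuting Dehn twists $P:=t_{a_2}t_{c_{r/2}}t_{b_{(r+4)/2}}t_{c_{(r+6)/2}}$ (with the evident analogues $t_{a_1}t_{b_2}t_{c_{(r+3)/2}}t_{a_3}$ and $t_{a_1}t_{b_2}t_{c_4}t_{a_3}$ in the two other cases).

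First I would extract twist-ratios. The curves occurring in $P$ are pairwise disjoint; $R$ shifts their $b$- and $c$-indices cyclically, and the reflection $\sigma$ carries each standard curve to another standard curve, reversing local orientation on some of them. Conjugating $P$ by powers of $R$ and by $\sigma$ and then multiplying by $P^{-1}$ (or by the inverse of such a conjugate) cancels all but at most two of the twist factors, so iterating this produces many elements $t_\alpha t_\beta^{-1}\in G$ with $\alpha,\beta$ disjoint. Composing these as $t_\alpha t_\gamma^{-1}=(t_\alpha t_\beta^{-1})(t_\beta t_\gamma^{-1})$ and transporting by $R$, I would assemble the ratios $t_{b_i}t_{b_{i+1}}^{-1}$, $t_{b_i}t_{c_i}^{-1}$, $t_{c_i}t_{b_{i+1}}^{-1}$ and $t_{a_1}t_{a_2}^{-1}$ --- essentially the two-sided part of the data in Theorem~\ref{theoremnon3}.

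The decisive step is to promote one twist-ratio to an honest Dehn twist, and for this I would use a lantern relation on a four-holed sphere embedded in $N_g$, chosen so that, modulo the elements of $G$ already obtained, all seven of its twists lie in $G$; solving the relation for a single factor then places some $t_{a_i}$ in $G$, and the ratios above immediately give all of $t_{a_1},t_{a_2},t_{b_i},t_{c_i}$, while the last Omori generator $t_\epsilon$ is recovered as the conjugate of a known twist by a word in $R,\sigma,\rho_1,\rho_2$. Hence $G$ contains Omori's generators and $G=\mathcal{T}_g$. I expect this lantern step to be the main obstacle: the four-holed sphere must be positioned so that every one of its boundary and interior twists is already accessible, and this positioning is precisely why the three regimes ($r$ even, $r$ odd, $g=12$) need separate configurations --- the curves $c_{r/2}$, $c_{(r+3)/2}$, $c_4$ only occupy the required location for the matching parity. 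The twist-ratio bookkeeping of the first two steps, though lengthy, is routine once the explicit actions of $R$ and $\sigma$ on the labelled curves are written out.
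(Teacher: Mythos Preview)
The survey does not actually prove this theorem: after the statement it only remarks that a direct computation shows the third element in each list is an involution, and for the argument itself it defers to \cite{altunoz-pamuk-yildiz4}. So there is no line-by-line comparison to make; one can only compare your outline with the strategy the survey uses in the neighbouring results (Theorems~\ref{theoremnon3}--\ref{theoremnon5} and Theorem~\ref{theorem_twist_three}).

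Your plan is the right one and matches that template: form $R=\rho_1\rho_2$, peel off the commuting twist product $P$ from the third involution, manufacture twist-ratios by conjugating $P$ with powers of $R$ and cancelling, feed these into a lantern relation to obtain a single Dehn twist, and then propagate with $R$ to recover all of Omori's generators from Theorem~\ref{twistthm1}. Your diagnosis that the lantern step is the crux, and that the three cases in the statement exist precisely to place the curves in $P$ where a usable four-holed sphere sits, is accurate.

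One point to sharpen concerns $\sigma$. You fold $\sigma$ into the twist-ratio machinery (``the reflection $\sigma$ carries each standard curve to another standard curve''), but $\sigma$ is defined on a \emph{different} model of $N_g$ (Figure~\ref{sigma_tay}) from the one carrying $R,\rho_1,\rho_2$ (Figure~\ref{involutiontwisteven_tay}); it is not set up to permute the $a_i,b_i,c_i$ in any simple pattern. In the original argument its job is exactly the last step you mention --- producing $t_\epsilon$ --- rather than the ratio-extraction, which is driven by $R$ (and $\rho_1$) alone. So your outline is sound, but the division of labour should be: $R$ and the lantern give $t_{a_1},t_{a_2},t_{b_i},t_{c_i}$; $\sigma$ then supplies $t_\epsilon$ by conjugating one of those twists.
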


(Here, a straightforward computation verifies that the third element in each generating set is indeed an involution.) For $g=6,8$ and $10$, we use more involutions to generate $\mathcal{T}_g$ (see~\cite[Theorems 3.6, 3.7 and 3.8]{altunoz-pamuk-yildiz4} for the additional involution generators). In these cases, while five involutions generate $\mathcal{T}_{10}$ and $\mathcal{T}_8$, the twist subgroup $\mathcal{T}_{6}$ requires six such generators. Let us now state the involution generators for $g=2r+1$.

For $g=4k+1$, we consider two models for $N_g$: the left model in Figure~\ref{invoddtwist} and the model in Figure~\ref{twistoddrho12}. In the first model, the crosscaps $\mathcal{C}_i$ are in a circular position with $\mathcal{C}_2$ on the positive $z$-axis. The rotation $T$ by $2\pi/g$ about the $x$-axis cyclically permutes the crosscaps via $\mathcal{C}_i \mapsto \mathcal{C}_{i+1}$ modulo $g$. In the second model, we embed a genus $r$ surface (minus one disk) in $\mathbb{R}^{3}$ with handles in a circular position, and the crosscap $\mathcal{C}_2$ is on the $+z$-axis. The rotation $R$ by
$\frac{2\pi}{r}$ about the $x$-axis maps the curve $b_i$ to $b_{i+1}$ modulo $r$. Define two involutions $\rho_{1}$ and $\rho_{2}$, where $\rho_{1}$ is the reflection in the $xz$-plane and $\rho_{2}$ is the reflection in the plane $z=\tan(\frac{\pi}{r})y$ as depicted in Figure~\ref{twistoddrho12}. The mapping classes $\rho_{1}$ and $\rho_{2}$ satisfy $D(\rho_{1})=D(\rho_{2})=1$ if $g=4k+1$. In this case, they are contained in $\mathcal{T}_g$, and the rotation $R$ is equal to $\rho_1\rho_2$.

For $g=4k+3$, we consider the right model in Figure~\ref{invoddtwist}. In this model, each crosscap $\mathcal{C}_i$ for $i=1,\ldots,g-2$ is in a circular position with the second crosscap $\mathcal{C}_2$ on the $+z$-axis. The rotation $T$ by $\frac{2\pi}{g-2}$ about the $x$-axis maps the crosscap $\mathcal{C}_i$ to $\mathcal{C}_{i+1}$ for $i=1,\ldots,g-3$. The crosscap  $\mathcal{C}_{g-1}$ is on the $+x$-axis, and $\mathcal{C}_g$ is obtained by rotating  $\mathcal{C}_{g-1}$ by $\pi$ about the $+z$-axis.
\begin{figure}[h]
\begin{center}
\scalebox{0.35}{\includegraphics{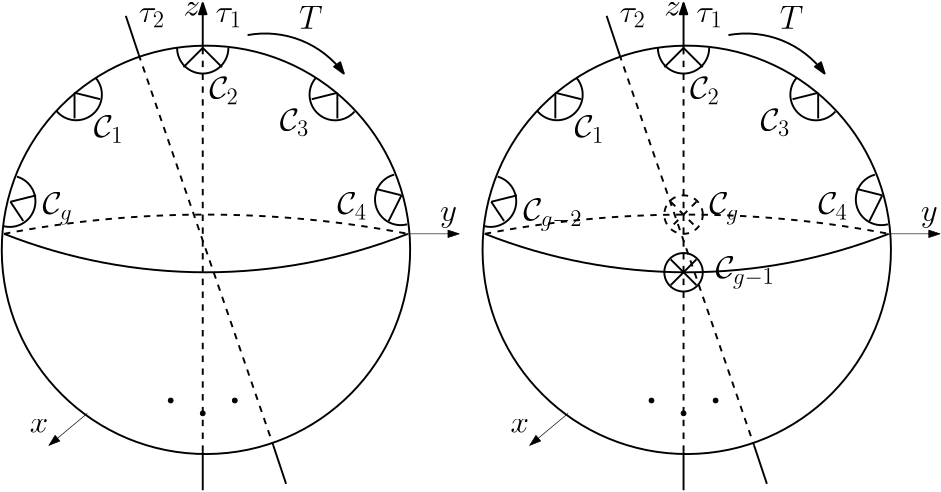}}
\caption{The involutions $\tau_1$ and $\tau_2$ for $g=2r+1$.}
\label{invoddtwist}
\end{center}
\end{figure} 

\begin{figure}[h]
\begin{center}
\scalebox{0.35}{\includegraphics{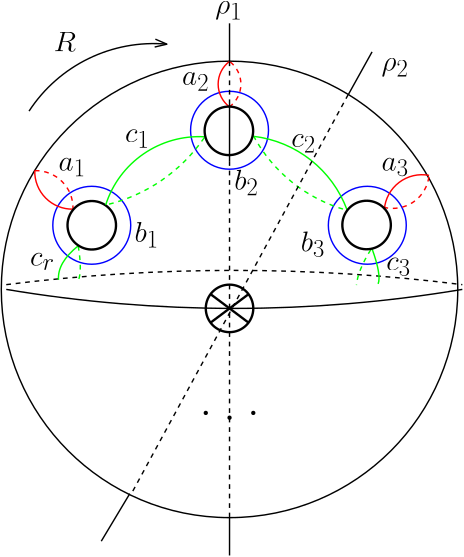}}
\caption{The involutions $\rho_1$ and $\rho_2$ for $g=2r+1$.}
\label{twistoddrho12}
\end{center}
\end{figure} 
\begin{figure}[h]
\begin{center}
\scalebox{0.35}{\includegraphics{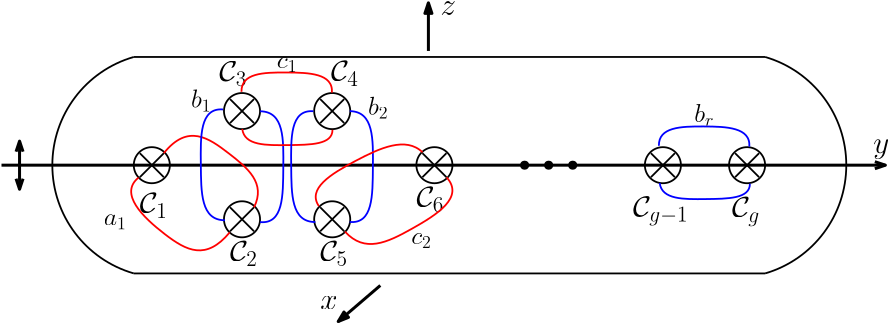}}
\caption{The involution $\beta$ for $g=2r+1$.}
\label{beta_twist}
\end{center}
\end{figure} 
\begin{figure}[h]
\begin{center}
\scalebox{0.35}{\includegraphics{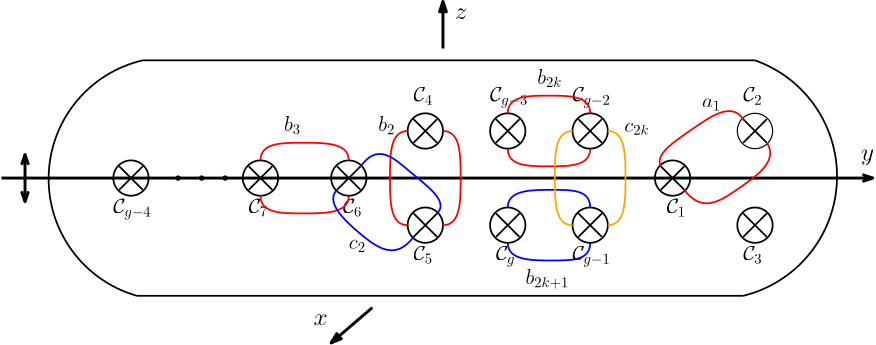}}
\caption{The involution $\mu$ for $g=4k+3$.}
\label{mu_twist}
\end{center}
\end{figure}

For \( g = 4k + 3 \), consider the right-hand model shown in Figure~\ref{invoddtwist}. Define two involutions $\tau_1$ and 
$\tau_2$, where $\tau_1$ is the reflection about the $z$-axis and $\tau_2$ is the reflection in the plane  $z=\tan(\frac{\pi}{r})y$. We have $D(\tau_1)=D(\tau_2)=1$ if $g = 4k + 3$, and so $\tau_1$ and $\tau_2$ are in $\mathcal{T}_g$. Let us consider $N_g$ where $g$ crosscaps are distributed as in Figure~\ref{beta_twist} to define the involution $\beta$, which is the reflection in the $xy$ plane. The involution $\beta$ satisfies $D(\beta)=1$, and hence the mapping class $\beta$ is an element of $\mathcal{T}_g$. Let $\mu$ be another reflection in the $xy$ plane defined on the model for $N_g$ in Figure~\ref{mu_twist}. It can be verified that $D(\mu)=1$ for $k\geq2$. Thus $\mu$ is in $\mathcal{T}_g$ for $k\geq2$.
\begin{theorem}\label{theorem_involution_twist_odd_tay}
The twist subgroup $\mathcal{T}_g$ can be generated by the following involution elements:
\begin{equation*}
 \begin{cases}
      \rho_1,\rho_2,\rho_1t_{a_2}t_{c_{\frac{r}{2}}}t_{b_{\frac{r+4}{2}}}t_{c_{\frac{r+6}{2}}},\beta & \text {if } g=4k+1 \text{ and } k\geq3,\\
        \tau_1,\tau_2, \tau_1\tau_2\tau_1A_2,\tau_2A_1,\beta  & \text {if } g=4k+1 \text{ and } k\geq1,\\
        \tau_1,\tau_2, \tau_1\tau_2\tau_1A_2,\tau_2A_1,\mu  & \text {if } g=4k+3 \text{ and } k\geq1.
 \end{cases}       
\end{equation*} 
\end{theorem}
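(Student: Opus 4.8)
The plan follows the scheme of Theorems~\ref{theoremnon4} and~\ref{theorem_involution_twist_even_tay}. Let $G$ be the subgroup of $\mathcal{T}_g$ generated by the listed involutions; I would show that $G$ contains Omori's Dehn twist generating set from Theorem~\ref{twistthm1} --- the twists $t_{a_1}$, $t_{a_2}$, $t_{b_i}$, $t_{c_i}$ and $t_\epsilon$ --- which forces $G=\mathcal{T}_g$. The three cases ($g=4k+1$ with $k\geq3$; $g=4k+1$ with $k\geq1$; $g=4k+3$) are handled by the same mechanism applied to the models and reflections of Figures~\ref{invoddtwist}, \ref{twistoddrho12}, \ref{beta_twist} and \ref{mu_twist}; the stated genus bounds are precisely what is needed for the curves named in the composite generators to make sense and for the lantern configurations used below to embed in $N_g$.

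First I would record the facts that make $G$ large. In each case a product of two reflection generators is a rotation lying in $G$: $\rho_1\rho_2=R$ in the first case and $\tau_1\tau_2=T$ in the other two, and these reflections genuinely lie in $\mathcal{T}_g$ because $D(\rho_i)=D(\tau_i)=1$ (and, for $\mu$, because $D(\mu)=1$ once $k\geq2$). The rotation cyclically permutes the one-sided curves underlying the chain $\{b_i\}$ (respectively the relevant chain of curves), so conjugation by it permutes the corresponding Dehn twists. Next, each third generator has the form $\rho\,w$ with $\rho$ a reflection generator and $w$ a product of Dehn twists about curves that $\rho$ pairs off while reversing local orientation; such curves are pairwise disjoint, so their twists commute and $\rho w\rho=w^{-1}$ (which is why $\rho w$ is an involution), and hence $w=\rho(\rho w)\in G$. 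Thus $G$ contains an explicit product of four commuting Dehn twists: $t_{a_2}t_{c_{r/2}}t_{b_{(r+4)/2}}t_{c_{(r+6)/2}}$ in the first case, and the Dehn-twist products $A_2$ and $A_1$ in the other two, obtained by peeling $\tau_1\tau_2\tau_1$ and $\tau_2$ off the fourth and fifth generators.

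The heart of the argument is to pass from these Dehn twist products to individual Dehn twists, exactly as in Theorem~\ref{theoremnon4}. Conjugating $w$ by a suitable power of the rotation gives a product sharing all but one factor with $w$; the quotient of the two telescopes the common factors away and leaves an element of the form $t_\alpha t_\beta^{-1}\in G$. Iterating along the rotation orbit yields all elements $t_{b_i}t_{c_i}^{-1}$, $t_{c_i}t_{b_{i+1}}^{-1}$, and hence the whole chain of differences $t_{b_i}t_{b_{i+1}}^{-1}$. A lantern relation on a four-holed sphere, embedded so that six of its seven curves are linked to the seventh by differences already in $G$, then isolates a single Dehn twist, and conjugation by powers of $R$ (resp.\ $T$) spreads it to all $t_{b_i}$ and $t_{c_i}$. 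Finally the last reflection --- $\beta$ in the two $g=4k+1$ cases and $\mu$ when $g=4k+3$, each a $D=1$ reflection of a differently drawn model of $N_g$ --- has a conjugation action relating the twists already produced to $t_{a_1}$, $t_{a_2}$ and $t_\epsilon$, the generators the rotation alone cannot reach; this completes Omori's list and the proof.

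The step I expect to be the main obstacle is this isolation step together with the geometric bookkeeping feeding it: one must pin down the exact sequence of rotation-conjugations and the exact embedded lantern that strips a four-fold Dehn twist product down to a single twist, and verify that each reflection acts on every curve appearing along the way as claimed --- fixing the curve up to its mirror image and reversing local orientation --- so that the composite elements really are involutions and all conjugation identities hold. This is finite but delicate, and it is where the three cases genuinely diverge in detail even though the strategy is uniform; I would organize it by first tabulating, in each model, the images of $a_1$, $a_2$, the $b_i$, the $c_i$ and $\epsilon$ under $\rho_1,\rho_2,\beta$ (resp.\ $\tau_1,\tau_2,\mu$) and under the rotation, and only then carry out the algebra.
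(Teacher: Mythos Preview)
The paper does not actually prove this theorem; it merely states the result (from~\cite{altunoz-pamuk-yildiz4}) and moves on to the small-genus exceptions $g=5,7,9$. Your sketch follows precisely the template the paper does exhibit for the analogous results---Theorems~\ref{theoremnon5} and~\ref{theorem_involution_twist_even_tay}---namely: extract the rotation $R=\rho_1\rho_2$ (resp.\ $T=\tau_1\tau_2$), strip the reflections off the composite generators to get explicit Dehn-twist products in $G$, telescope via rotation-conjugates to obtain twist-differences, apply a lantern relation to isolate a single twist, propagate along the chain, and use the extra reflection $\beta$ or $\mu$ to reach $t_\epsilon$ and the remaining generators from Omori's list. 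This is the correct strategy and matches what the cited source does.

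Two small remarks. First, you correctly flag that the paper states $D(\mu)=1$ only for $k\geq2$, whereas the theorem is written for $k\geq1$; this is a genuine tension in the survey (indeed the paper immediately afterward says $\mathcal{T}_7$ needs six involutions, not five), so your caution there is well placed rather than a gap in your argument. Second, the elements $A_1,A_2$ are never defined in the paper, so your inference that they are Dehn-twist products satisfying $\tau_2 A_1\tau_2=A_1^{-1}$ and $(\tau_1\tau_2\tau_1)A_2(\tau_1\tau_2\tau_1)=A_2^{-1}$ is the right reading, but in a full write-up you would need to import their explicit definitions from~\cite{altunoz-pamuk-yildiz4} before the telescoping step can be carried out concretely.
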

For $g=5,7$ and $9$, using additional (or different) involution elements, it can be shown that $\mathcal{T}_5$ and $\mathcal{T}_9$ can be generated by four involutions while $\mathcal{T}_7$ requires six involutions.
\subsection{Commutator generators for the twist subgroup}
The twist subgroup $\mathcal{T}_g$ is perfect\index{perfect group}—that is, it is equal to its own commutator subgroup\index{commutator subgroup}—for genus $g \geq 7$ ~\cite{korkmaz1998,korkmaz2002}. A natural question concerning perfect groups is whether the minimal number of generators is equal to the minimal number of commutator generators\index{generating set!commutator}. In the case of the mapping class group of a closed orientable surface of genus $g$—which is also perfect for $g \geq 3$—Baykur and Korkmaz~\cite{baykur-korkmaz2021} proved that the mapping class group can be generated by two commutators if $g \geq 5$ and by three commutators if $g \geq 3$. This motivates analogous investigations for the twist subgroup $\mathcal{T}_g$. In~\cite{altunoz-pamuk-yildiz2}, the authors examined this question for $\mathcal{T}_g$; we discuss their results in this section. The proof proceeds by cases based on the parity of $g$ and its value modulo $4$.
\begin{figure}[h]
\begin{center}
\scalebox{0.35}{\includegraphics{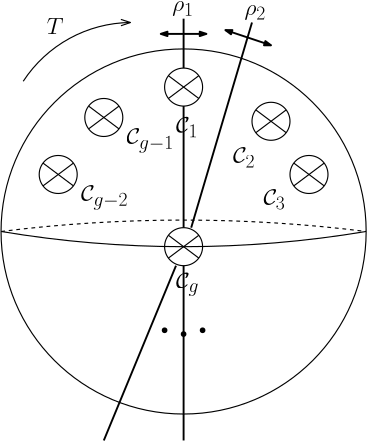}}
\caption{The reflections $\rho_1$ and $\rho_2$ on the surface $N_g$ for $g=4k$.}
\label{rho4k}
\end{center}
\end{figure}

\begin{figure}[h]
\begin{center}
\scalebox{0.21}{\includegraphics{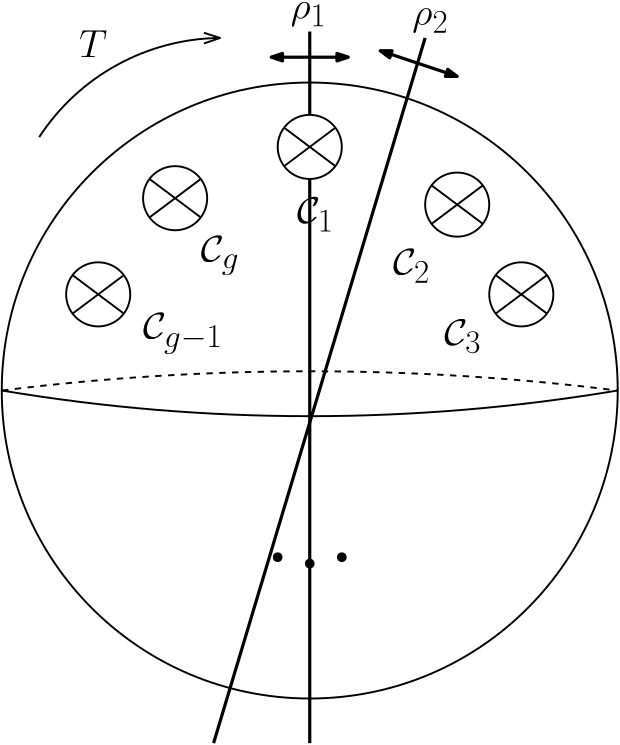}}
\caption{The reflections $\rho_1$ and $\rho_2$ on the surface $N_g$ for $g=4k+1$.}
\label{rho4k+1}
\end{center}
\end{figure}
\begin{figure}[h]
\begin{center}
\scalebox{0.25}{\includegraphics{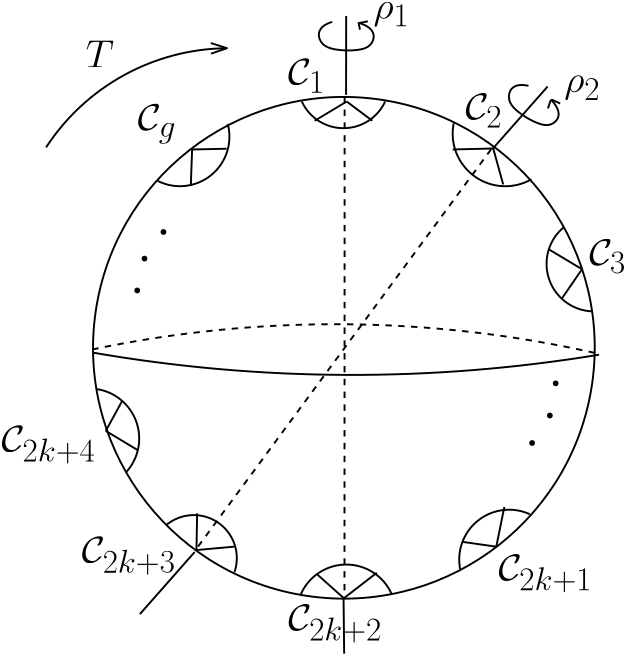}}
\caption{The rotations $\rho_1$ and $\rho_2$ on the surface $N_g$ for $g=4k+2$.}
\label{rho4k+2}
\end{center}
\end{figure}
\begin{figure}[h]
\begin{center}
\scalebox{0.29}{\includegraphics{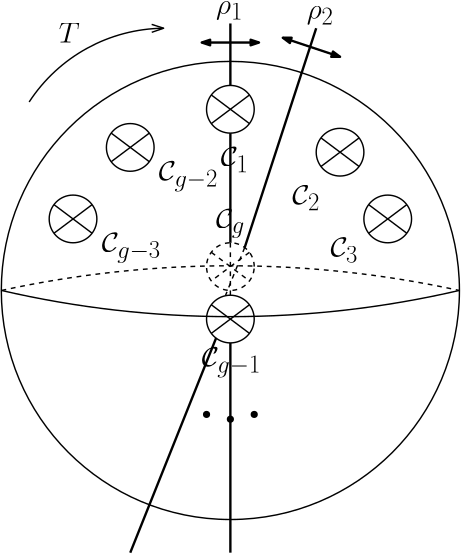}}
\caption{The reflections $\rho_1$ and $\rho_2$ on the surface $N_g$ for $g=4k+3$.}
\label{rho4k+3}
\end{center}
\end{figure}
We define the mapping classes $\rho_1$ and $\rho_2$ differently for each congruence class of $g$ modulo $4$ so that the rotation $T$ is expressed as $T=\rho_2\rho_1$:
\begin{itemize}
     \item \textbf{Case $g=4k$}: 
    The reflections $\rho_1,\rho_2$ shown in Figure~\ref{rho4k} lie in $\mathcal{T}_g$.
 \item \textbf{Case $g=4k+1$}: 
    The reflections $\rho_1,\rho_2$ in Figure~\ref{rho4k+1} belong to $\mathcal{T}_g$.
 \item \textbf{Case $g=4k+2$}: 
    $\rho_1,\rho_2$ are $\pi$-rotations (Figure~\ref{rho4k+2}).
 \item \textbf{Case $g=4k+3$}: 
    We again use reflections $\rho_1,\rho_2$ as in Figure~\ref{rho4k+3}.
\end{itemize}

For each case, the rotation $T = \rho_2\rho_1$ can be expressed as a commutator in $\mathcal{T}_g$, as shown in the following proposition (for details see the proof in ~\cite[Proposition 4.1]{altunoz-pamuk-yildiz2}).
\begin{proposition}\label{prop_comm}
The rotation $T \in \mathcal{T}_g$ can be written as a commutator as follows:
\begin{itemize}
    \item $T = [T^{2k}, \rho_1]$ for $g=4k$,
\item $T = [T^{2k+1}, \rho_1]$ for $g=4k+1$ or $g=4k+3$, 
\item $T = [T^{k+1}, \rho_1]$, for $g=4k+2$,
\end{itemize}
where $k \geq 1$ in each case.
\end{proposition}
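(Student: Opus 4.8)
The plan is to reduce the whole statement to the single relation $T=\rho_2\rho_1$ together with the fact that $\rho_1$ is an involution; everything else is then an arithmetic check inside the cyclic group $\langle T\rangle$.

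First I would record the conjugation identity that does all the work. Since $\rho_1^2=1$,
\[
\rho_1 T\rho_1^{-1}=\rho_1(\rho_2\rho_1)\rho_1=\rho_1\rho_2=(\rho_2\rho_1)^{-1}=T^{-1},
\]
so conjugation by $\rho_1$ inverts $T$, and hence $\rho_1 T^{-m}\rho_1^{-1}=T^{m}$ for every integer $m$. With the commutator convention $[a,b]=aba^{-1}b^{-1}$ this gives
\[
[T^m,\rho_1]=T^m\rho_1 T^{-m}\rho_1^{-1}=T^m\bigl(\rho_1 T^{-m}\rho_1^{-1}\bigr)=T^m\cdot T^m=T^{2m}.
\]
Therefore $[T^m,\rho_1]=T$ holds exactly when $2m\equiv 1\pmod{\operatorname{ord}(T)}$, and it only remains to insert the order of $T$ in each model.

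Next I would pin down $\operatorname{ord}(T)$ in the four cases; these genuinely differ, since $T$ is built from a different symmetry for each residue of $g$ modulo $4$: one has $\operatorname{ord}(T)=4k-1$ for $g=4k$, $\operatorname{ord}(T)=4k+1$ for $g=4k+1$, $\operatorname{ord}(T)=4k+1$ for $g=4k+3$, and $\operatorname{ord}(T)=2k+1$ for $g=4k+2$. Each case is then a one-line verification: $2(2k)=4k\equiv 1\pmod{4k-1}$; $2(2k+1)=4k+2\equiv 1\pmod{4k+1}$ (this serves both $g=4k+1$ and $g=4k+3$); and $2(k+1)=2k+2\equiv 1\pmod{2k+1}$. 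Finally, since $T\in\mathcal{T}_g$ and each $\rho_1$ was arranged — via the determinant homomorphism $D$ of Lemma~\ref{lemma_twist}, composing with a crosscap slide when necessary — to lie in $\mathcal{T}_g$, the element $[T^m,\rho_1]$ is a commutator of two elements of $\mathcal{T}_g$, so $T$ is realized as a commutator inside $\mathcal{T}_g$, as claimed.

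The computation itself is immediate; the substantive content is geometric and already in place, namely that for each residue of $g$ modulo $4$ one can realize the rotation $T$ as a product $\rho_2\rho_1$ of two involutions of $N_g$ lying in $\mathcal{T}_g$ (the models of Figures~\ref{rho4k}--\ref{rho4k+3}). Hence I would not expect the proof of this proposition to be the obstacle; the delicate point is choosing the models correctly — the reflections or rotations $\rho_i$ must genuinely preserve the surface and satisfy $D(\rho_i)=1$ (up to a crosscap-slide adjustment), and the order of $T$ must be exactly what the arithmetic demands. In particular, for $g=4k+2$ the naive choice of the order-$(g-1)=(4k+1)$ rotation would make $T^{k+1}$ fail, so it is essential that this model uses the order-$(2k+1)$ rotation instead.
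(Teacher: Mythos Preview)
Your argument is correct and matches the standard approach: the relation $T=\rho_2\rho_1$ with $\rho_1$ an involution gives $\rho_1 T\rho_1^{-1}=T^{-1}$, hence $[T^m,\rho_1]=T^{2m}$, and the three formulae reduce to the congruences $2m\equiv 1\pmod{\operatorname{ord}(T)}$ that you verify case by case. The survey itself defers to \cite[Proposition~4.1]{altunoz-pamuk-yildiz2} for the proof, and this is precisely the computation carried out there; your observation that the models of Figures~\ref{rho4k}--\ref{rho4k+3} must be chosen so that $\operatorname{ord}(T)$ is odd (in particular $2k+1$ rather than $g-1$ when $g=4k+2$, and $4k+1$ rather than $g$ when $g=4k+3$) is exactly the point.
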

We are now ready to give the main result of this section. We work with generating sets for $\mathcal{T}_g$ in which every generator can be represented as a single commutator. Notably, the rotation $T$ in each set can be expressed as a commutator by the proposition above.
\begin{itemize}
\item$g=4k\geq50$: By Theorem~\ref{theorem_twist_two}, $\mathcal{T}_g$ is generated by $T$ and $t_{\gamma_{10}}t_{c_2}^{-1}t_{f_{26}}t_{d_{41}}^{-1}$. Construct an explicit diffeomorphism 
\[
\phi:=t_{m_{6,26}}t_{c_2}t_{f_{26}}t_{m_{6,26}}t_{m_{13,41}}t_{d_{41}}t_{\gamma_{10}}t_{m_{13,41}}
\]
mapping curve pairs \(
(\gamma_{10}, f_{26}) \mapsto (d_{41}, c_2),
\) verifying that  \(t_{\gamma_{10}}t_{c_2}^{-1}t_{f_{26}}t_{d_{41}}^{-1} = [t_{\gamma_{10}}t_{f_{26}}, \phi]\) by conjugation.
 \item $g=4k+1\geq29$: The twist subgroup $\mathcal{T}_g$ is generated by $T$ and $t_{\gamma_{10}}t_{c_2}^{-1}t_{f_{18}}t_{c_{12}}^{-1}$ by ~\cite[Theorem 3.1]{altunoz-pamuk-yildiz2}. Similarly, one can find a diffeomorphism $\varphi$ that maps $(\gamma_{10},f_{18}) \mapsto (c_{12}, c_2)$. We obtain $t_{\gamma_{10}}t_{c_2}^{-1}t_{f_{18}}t_{c_{12}}^{-1}=[t_{\gamma_{10}}t_{f_{18}},\varphi]$.
 \item $g=4k+2\geq50$: For this case, by ~\cite[Theorem 4.3]{altunoz-pamuk-yildiz2}, $\mathcal{T}_g$ is generated by $T$ and $t_{\gamma_{11}}t_{b_3}^{-1}t_{f_{21}}t_{c_{14}}^{-1}$. To show that this element can be written as a single commutator, one can build a diffeomorphism \(\psi\) sending \((\gamma_{11}, f_{21})\) to \((c_{14}, b_3)\), which shows that the element is equal to \([t_{\gamma_{11}}t_{f_{21}}, \psi]\).
 \item $g=4k+3\geq43$: We use the generating set given in ~\cite[Theorem 4.3]{altunoz-pamuk-yildiz2}: the three elements $T$, $t_{\gamma_{12}}t_{b_3}^{-1}t_{f_{21}}t_{u_{37}}^{-1}$ and $t_{b_{2k+1}}t_{a_{1}}^{-1}$. We can explicitly construct diffeomorphisms $\phi_1$ and  $\phi_2$ such that 
\begin{itemize}
        \item \(t_{\gamma_{12}}t_{b_3}^{-1}t_{f_{21}}t_{u_{37}}^{-1} = [t_{\gamma_{12}}t_{f_{21}}, \phi_1]\) by \(\phi_1: (\gamma_{12}, f_{21}) \mapsto (u_{37}, b_3)\).
        \item \(t_{b_{2k+1}}t_{a_{1}}^{-1} = [t_{b_{2k+1}}, \phi_2]\) where \(\phi_2\) maps \(b_{2k+1} \mapsto a_1\).
\end{itemize}
(Here all curves are shown in Figures~\ref{Todd_twist} and ~\ref{Teven_twist}.) See \cite{altunoz-pamuk-yildiz2} for the explicit expressions of the diffeomorphisms $\varphi$, \(\psi\), $\phi_1$ and  $\phi_2$.
\end{itemize}

\begin{theorem}\label{theorem_twist_com}
The twist subgroup $\mathcal{T}_g$ is generated by
\begin{itemize}
    \item two commutators if $g=4k\geq50$, $g=4k+2\geq50$ or $g=4k+1\geq29$, and
    \item three commutators if $g=4k+3\geq43$.
\end{itemize}
\end{theorem}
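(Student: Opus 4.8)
The plan is to derive the theorem by combining three ingredients that are already available: the minimal generating sets of Theorem~\ref{theorem_twist_two} (together with the three-element set quoted above for $g=4k+3$), Proposition~\ref{prop_comm}, which exhibits the rotation $T$ as a single commutator in $\mathcal{T}_g$, and the following elementary ``conjugation move''. If $\phi$ is a product of Dehn twists carrying an ordered pair of two-sided curves $(\alpha,\beta)$ to $(\delta,\gamma)$, with $\beta$ and $\gamma$ disjoint and with all induced twist exponents equal to $+1$, then
\[
[\,t_\alpha t_\beta,\ \phi\,] = t_\alpha t_\beta\, t_{\phi(\beta)}^{-1} t_{\phi(\alpha)}^{-1} = t_\alpha t_\beta\, t_\gamma^{-1} t_\delta^{-1} = t_\alpha t_\gamma^{-1} t_\beta t_\delta^{-1}.
\]
Since $t_\alpha t_\beta$ and $\phi$ are both products of Dehn twists, they lie in $\mathcal{T}_g$, so the right-hand side is a genuine commutator of elements of $\mathcal{T}_g$; this is exactly the shape of the four-twist generators appearing in Theorem~\ref{theorem_twist_two}.

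The steps, in order, would be as follows. First I would record the relevant generating set in each congruence class: $\{T,\ t_{\gamma_{10}}t_{c_2}^{-1}t_{f_{26}}t_{d_{41}}^{-1}\}$ for $g=4k\geq 50$, $\{T,\ t_{\gamma_{10}}t_{c_2}^{-1}t_{f_{18}}t_{c_{12}}^{-1}\}$ for $g=4k+1\geq 29$, $\{T,\ t_{\gamma_{11}}t_{b_3}^{-1}t_{f_{21}}t_{c_{14}}^{-1}\}$ for $g=4k+2\geq 50$, and $\{T,\ t_{\gamma_{12}}t_{b_3}^{-1}t_{f_{21}}t_{u_{37}}^{-1},\ t_{b_{2k+1}}t_{a_1}^{-1}\}$ for $g=4k+3\geq 43$. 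Second, by Proposition~\ref{prop_comm} the rotation $T$ equals a single commutator $[T^{m},\rho_1]$ with $m\in\{2k,2k+1,k+1\}$ and $\rho_1\in\mathcal{T}_g$ in the respective cases, so $T$ contributes one commutator. Third, for each remaining generator I would write down the explicit diffeomorphism ($\phi$, $\varphi$, $\psi$, or $\phi_1$) as a product of Dehn twists realizing the prescribed curve correspondence — for instance $\phi=t_{m_{6,26}}t_{c_2}t_{f_{26}}t_{m_{6,26}}t_{m_{13,41}}t_{d_{41}}t_{\gamma_{10}}t_{m_{13,41}}$ in the $g=4k$ case — and apply the identity above, and likewise use the single-twist version $t_{b_{2k+1}}t_{a_1}^{-1}=[t_{b_{2k+1}},\phi_2]$ with $\phi_2(b_{2k+1})=a_1$ for the extra generator when $g=4k+3$. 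Counting then yields two commutators for $g=4k\geq 50$, $g=4k+2\geq 50$, and $g=4k+1\geq 29$, and three commutators for $g=4k+3\geq 43$.

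The main obstacle — the only genuinely nontrivial part — is the construction and verification of the diffeomorphisms $\phi,\varphi,\psi,\phi_1,\phi_2$. For each one must check that the explicit Dehn-twist word really carries the first curve pair to the second, which is done by repeated application of the relation $t_x t_y t_x(x)=y$ for curves meeting once (equivalently, by drawing the isotopy explicitly), that the pair $(\beta,\gamma)$ is disjoint so the final commuting move in the identity is legitimate, and — most delicately — that every induced twist exponent is $+1$ rather than $-1$, which requires tracking local orientations and, for $\rho_1$, verifying $D(\rho_1)=1$ in the chosen model. These positioning requirements are precisely what force the numerical lower bounds on $g$: the curves with the indicated indices ($\gamma_{10}$, $f_{26}$, $d_{41}$, $u_{37}$, etc.) must all be present and in the claimed configuration on $N_g$, which demands sufficiently many crosscaps. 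Once these diffeomorphisms are pinned down, everything else is bookkeeping, and the explicit formulas for $\varphi$, $\psi$, $\phi_1$, $\phi_2$ can be quoted from~\cite{altunoz-pamuk-yildiz2}.
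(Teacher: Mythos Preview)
Your proposal is correct and follows essentially the same route as the paper: take the two- or three-element generating sets from Theorem~\ref{theorem_twist_two} and \cite[Theorems~3.1 and~4.3]{altunoz-pamuk-yildiz2}, express $T$ as a commutator via Proposition~\ref{prop_comm}, and realize each remaining four-twist (or two-twist) generator as $[t_\alpha t_\beta,\phi]$ by exhibiting an explicit Dehn-twist word $\phi$ carrying the appropriate curve pair to the other. Your identification of the disjointness condition on $(\beta,\gamma)$, the orientation check, and the role of the curve-index bounds in forcing the genus thresholds exactly matches the paper's treatment.
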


For smaller values of the genus $g$, the number of required commutator generators increases. The following results provide a bound on the number of commutator generators when $g\geq 8$. The argument of the proof of the following theorem follows the same approach as in Theorem~\ref{theorem_twist_com}.
\begin{theorem}\label{theorem_twist_com_small}
The twist subgroup $\mathcal{T}_g$ is generated by 
  \begin{itemize}
      \item three commutators if $g=2r+2\geq8$ or $g=4k+1\geq9$, and
      \item four commutators if  $g=4k+3\geq11$.
  \end{itemize}
\end{theorem}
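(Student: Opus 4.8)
\emph{Proof proposal.} The plan is to follow the template of Theorem~\ref{theorem_twist_com} verbatim: begin with a small generating set in which, apart from the rotation $T$, every generator is a ``difference of twists'', i.e.\ a multitwist $t_{\alpha_1}\cdots t_{\alpha_n}t_{\beta_1}^{-1}\cdots t_{\beta_n}^{-1}$ with the $\alpha_i$ pairwise disjoint and the $\beta_i$ pairwise disjoint, and then rewrite each generator as a single commutator in $\mathcal{T}_g$. For $g=2r+2\geq8$ and $g=4k+1\geq9$ the input is the three-element set of Theorem~\ref{theorem_twist_three}, namely $\{T,\ t_{d_{g-1}}t_{a_2}^{-1},\ t_{f_1}t_{b_2}^{-1}\}$ in the even case and $\{T,\ t_{a_1}t_{a_2}^{-1},\ t_{f_1}t_{b_2}^{-1}\}$ in the odd case; all of the curves involved have small index and already appear on $N_g$ for these genera, so no adjustment of the model is needed.

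I would first dispose of the rotation. By Proposition~\ref{prop_comm}, applied with the congruence-class-dependent reflections $\rho_1,\rho_2$ of Figures~\ref{rho4k}--\ref{rho4k+3}, the rotation $T$ is a single commutator in $\mathcal{T}_g$ --- $T=[T^{2k},\rho_1]$, $[T^{2k+1},\rho_1]$, or $[T^{k+1},\rho_1]$ according to $g\bmod 4$ --- and this already holds for $k\geq1$, hence for all genera in the statement. One only checks that the relevant models and reflections are still valid for these small genera (e.g.\ $D(\mu)=1$ exactly for $k\geq2$, which is why the bound for $g=4k+3$ is $11$ and not $7$).

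Next, for each remaining generator I would exhibit an explicit $\phi\in\mathcal{T}_g$, written as a word in Dehn twists about two-sided curves, that realizes the prescribed change of curves. If $\phi(\alpha_i)=\beta_i$ for all $i$ and $\phi$ preserves the local orientation along each $\alpha_i$, then the conjugation relation $\phi t_{\alpha_i}\phi^{-1}=t_{\beta_i}$ together with the fact that twists about disjoint curves commute gives
\[
\bigl[\,t_{\alpha_1}\cdots t_{\alpha_n},\ \phi\,\bigr]=t_{\alpha_1}\cdots t_{\alpha_n}\,t_{\beta_n}^{-1}\cdots t_{\beta_1}^{-1}.
\]
Thus $t_{d_{g-1}}t_{a_2}^{-1}=[t_{d_{g-1}},\phi_1]$ with $\phi_1(d_{g-1})=a_2$ and $t_{f_1}t_{b_2}^{-1}=[t_{f_1},\phi_2]$ with $\phi_2(f_1)=b_2$ in the even case (and analogously $t_{a_1}t_{a_2}^{-1}=[t_{a_1},\phi_1']$ in the odd case). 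Combining these with the commutator for $T$ produces a generating set consisting of three commutators when $g=2r+2\geq8$ or $g=4k+1\geq9$.

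For $g=4k+3\geq11$ the same phenomenon that forced three rather than two commutators in Theorem~\ref{theorem_twist_com} reappears: the rotation that is available as a single commutator inside $\mathcal{T}_g$ in this congruence class does not cyclically permute all $g$ crosscaps, so it cannot by itself replace $T$ in a generating set, and one must adjoin one further difference of twists --- which, as above, is again a single commutator, say $[t_{b_{2k+1}},\phi_3]$ with $\phi_3(b_{2k+1})=a_1$, in the spirit of the auxiliary generator used in the $g=4k+3$ case of Theorem~\ref{theorem_twist_com}. This yields four commutators. The main obstacle throughout is the construction and verification of the diffeomorphisms $\phi_1,\phi_2,\phi_3$: one must write them as honest products of Dehn twists about two-sided curves and check, by careful tracking of the curves (and of local orientations, since a sign error turns $[t_\alpha,\phi]=t_\alpha t_\beta^{-1}$ into $t_\alpha t_\beta$), that they act as claimed; a secondary point is the bookkeeping for $g=4k+3$ that makes the fourth commutator genuinely necessary for this method.
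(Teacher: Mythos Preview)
Your proposal matches the paper's indicated approach exactly: start from the three-element generating sets of Theorem~\ref{theorem_twist_three}, express the rotation $T$ as a single commutator via Proposition~\ref{prop_comm}, and realize each remaining twist-difference generator $t_\alpha t_\beta^{-1}$ as $[t_\alpha,\phi]$ for an explicit $\phi\in\mathcal{T}_g$ carrying $\alpha$ to $\beta$, with the extra generator in the $g\equiv 3\pmod 4$ case arising because the rotation supplied by Proposition~\ref{prop_comm} in that congruence class is not the full $2\pi/g$-rotation of Theorem~\ref{theorem_twist_three}. One small slip: your parenthetical about $D(\mu)=1$ for $k\geq 2$ imports an involution from the involution-generator discussion that plays no role in the commutator argument---the genus thresholds here are simply those inherited from the input generating sets---but this does not affect the substance of the proof.
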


\bigskip

\noindent

\section{Open Problems and Future Directions}

Despite significant progress in understanding the generating sets for mapping class groups of nonorientable surfaces, several fundamental questions remain open. This section highlights some key problems and potential directions for future research.
\begin{itemize}
\item \textbf{Generation in Small and Intermediate Genera:} Many results for minimal generation by two or three elements hold for a sufficiently large genus $g$ (e.g., $g \ge 19$ for $\mathrm{Mod}(N_g)$). The behavior for smaller and intermediate genera remains an active area of investigation. A specific unresolved case is for the genus-$4$ surface, where it remains an open question whether $\mathrm{Mod}(N_4)$ can be generated by three torsion elements. Closing the gap between the small genera with known properties and the large genera covered by some results is a primary goal.

\item \textbf{Presentations from Minimal Generators:} A natural and significant next step is to find finite presentations for $\mathrm{Mod}(N_g)$ and $\mathcal{T}_g$ based on the minimal generating sets. For instance, finding a complete set of relations for the two-element generating set of $\mathrm{Mod}(N_g)$ for $g \ge 19$ would be a major breakthrough, deepening our algebraic understanding of these groups.

\item \textbf{Minimal Torsion Generation:}
 While it is known that $\mathrm{Mod}(N_g)$ can be generated by two elements for $g \geq 19$, it remains open whether these generators can both be torsion elements. An analogous question exists for the twist subgroup $\mathcal{T}_g$. It is generated by two elements for large $g$, but it is unknown whether two torsion elements suffice, particularly in the non-perfect cases ($g < 7$).

\item \textbf{Minimal Generation for Punctured Surfaces:} The results for the mapping class groups of punctured nonorientable surfaces, $\mathrm{Mod}(N_{g,p})$, hold primarily for large genera (e.g., $g \ge 14$). Determining the minimal number of generators (and also involution or torsion generators) for all values of $g$ and $p$ is a broad and challenging area for future work.

\item \textbf{Minimal Dehn Twist Generators for the Twist Subgroup:} While Stukow~\cite{stukow1} and Omori~\cite{omori} provided explicit finite generating sets for the twist subgroup $\mathcal{T}_g$ using Dehn twists, the question of minimality is unresolved. Omori's set consists of $g+1$ Dehn twists for $g \ge 4$, but it is not known if this is the smallest possible number of Dehn twist generators.

\item \textbf{Commutator Generators for the Twist Subgroup:} The twist subgroup $\mathcal{T}_g$ is known to be perfect for $g \ge 7$. For very large genera ($g \ge 29$ or $g \ge 43$, depending on the congruence class modulo $4$), it can be generated by two or three commutators. However, the minimal number of commutator generators required for smaller genera where the group is perfect (i.e., for $7 \le g < 29$) has not yet been determined.
\end{itemize}

\printindex 

\end{document}